\documentclass[11pt,reqno]{amsart}

\usepackage[T1]{fontenc}
\usepackage{lmodern}
\usepackage{microtype}
\usepackage{amsmath,amssymb,mathtools}
\usepackage{mathrsfs}
\usepackage{enumitem}
\usepackage{xcolor}
\usepackage{tikz}
\usetikzlibrary{arrows.meta,calc}
\usepackage[hidelinks,pdfusetitle]{hyperref}

\allowdisplaybreaks
\numberwithin{equation}{section}

\newtheorem{theorem}{Theorem}[section]
\newtheorem{proposition}[theorem]{Proposition}
\newtheorem{lemma}[theorem]{Lemma}

\theoremstyle{definition}

\theoremstyle{remark}
\newtheorem{remark}[theorem]{Remark}

\newcommand{\R}{\mathbb R}
\newcommand{\T}{\mathbb T}
\newcommand{\C}{\mathbb C}
\newcommand{\D}{\mathbb D}
\newcommand{\one}{\mathbf 1}
\newcommand{\eps}{\varepsilon}
\newcommand{\Om}{\Omega}
\newcommand{\pa}{\partial}
\newcommand{\cF}{\mathcal F}
\newcommand{\cG}{\mathcal G}

\newcommand{\Rea}{\operatorname{Re}}

\title[Vortex patches with arbitrarily many genuine holes]
{Uniformly Rotating Vortex Patches with Arbitrarily Many Genuine Holes}
\author[Z. Xue]{Zhilong Xue}
\address{Academy of Mathematics and Systems Science, Chinese Academy of
Sciences, Beijing 100190, People's Republic of China}
\email{zhilongxue@amss.ac.cn}

\author[W. Zhan]{Weicheng Zhan}
\address{School of Mathematical Sciences, Xiamen University, Xiamen 361005,
People's Republic of China}
\email{zhanweicheng@amss.ac.cn}

\begin{document}

\begin{abstract}
For every prescribed integer $N\ge2$, we construct uniformly rotating
unit-vorticity vortex patches $\omega=\one_D$ for the planar Euler equation
such that $D$ is connected and $\R^2\backslash D$ has exactly $N$ bounded
connected components.  These components are genuine zero-vorticity holes,
rather than opposite-sign vortex patches or regions carrying a second
nonzero vorticity level.  The angular velocities lie in the
rigidity-compatible interval $(0,1/2)$, and the domains converge in measure
to the Rankine disk as the holes collapse.

The construction starts from a fixed co-rotating polygonal configuration of
$N$ unit-vorticity vortex patches and removes a shrinking spatial copy of
that configuration from the Rankine disk.  An exact complement identity
solves all inner-boundary equations before the outer circle is perturbed.
The remaining defect is generated by the $N$-th exterior multipole and has
size $\varepsilon^{N+2}$.  The resulting outer correction feeds back into
the normalized inner problem at size $\varepsilon^{2N}$.  Separate renormalization of the outer and inner equations produces a limiting affine system with
a lower-triangular derivative.  Its diagonal blocks are the nonresonant
Rankine operator and the angular-velocity-augmented linearization of the
fixed seed configuration.  We also determine the first corrections to the
outer boundary, the hole boundaries, and the angular velocity.

\bigskip
\noindent\textbf{Keywords:} Two-dimensional Euler equation, vortex patch, $V$-states, implicit function theorem,
desingularization

\bigskip

\noindent\textbf{Mathematics Subject Classification (2020)}:
Primary 35Q31, 76B47; Secondary 35Q35.
\end{abstract}

\maketitle

\section{Introduction and main results}

\subsection{Vortex patches and \texorpdfstring{$V$}{V}-states}

We consider the two-dimensional incompressible Euler equation in vorticity form
\begin{equation}\label{eq:euler}
  \pa_t\omega+u\cdot\nabla\omega=0,
  \qquad
  u=\nabla^\perp\psi,
  \qquad
  \psi=\frac1{2\pi}\log|\cdot|*\omega,
\end{equation}
where $\nabla^\perp=(-\pa_2,\pa_1)$.  If $E\subset\R^2$ is bounded, we write
\begin{equation}\label{eq:001}
  \psi_E(x)=\frac1{2\pi}\int_E\log|x-y|\,\mathrm{d}y,
  \qquad
  u_E=\nabla^\perp\psi_E.
\end{equation}
We denote by $R_\vartheta$ the counterclockwise rotation through angle
$\vartheta$.  Global existence and uniqueness of classical solutions of the
two-dimensional Euler equation go back to Wolibner and H\"older
\cite{Wol33,Hol33}.  At the weak level, Yudovich proved global
well-posedness for bounded, integrable vorticity
\cite{Yud63}.  A bounded unit-vorticity vortex patch is a
vorticity of the form $\omega_0=\one_{D_0}$ with $D_0\subset\R^2$ bounded.
Such a patch is
transported by the unique Yudovich flow.  If $\pa D_0$ is of class $C^{k,\gamma}$, with $k\ge1$ and
$0<\gamma<1$, then this boundary regularity persists globally by the
results of Chemin and Bertozzi--Constantin \cite{Che93,BC93}; see also
\cite{MB02} for a systematic account of the vorticity formulation.
At the $C^2$ level, however, Kiselev and Luo established
ill-posedness by constructing $C^2$ initial patches whose boundary curvature
becomes unbounded at arbitrarily small positive times \cite{KL23}.

We say that the vortex patch $\omega=\one_D$ is uniformly rotating, or is a
$V$-state, with angular velocity $\Om$ if
\[
 \omega(t,x)=\one_D(R_{-\Om t}x)
\]
is a solution of \eqref{eq:euler}.  The corresponding stationary equation in
the rotating frame is
\begin{equation}\label{eq:002}
  (u_D(x)-\Om x^\perp)\cdot n(x)=0,
  \qquad x\in\pa D, 
\end{equation}
where $n(x)$ is the outward unit normal to the patch boundary $\pa D$ at $x$.
Equivalently, the relative stream function
\begin{equation}\label{eq:003}
  \Psi_{D,\Om}(x)=\psi_D(x)-\frac{\Om}{2}|x|^2
\end{equation}
is constant on each connected component of $\pa D$.  The constants on distinct
components need not agree.

The Rankine disk is the basic radial example.  Kirchhoff's ellipses provide
the classical noncircular family of simply connected $V$-states
\cite{Kir76}.  The numerical computations of Deem and Zabusky
\cite{DZ78} led to Burbea's local bifurcation theorem from the
disk \cite{Bur82}.  Subsequent work established boundary regularity and
analyticity of simply connected $V$-states
\cite{HMV13,CCG16}, bifurcation from
Kirchhoff ellipses \cite{HM16a}, and global continuation of the
Burbea branches \cite{HMW20}.  Earlier partial
rigidity results for rotating patches were obtained by Hmidi
\cite{Hmi15}.  Rigidity results impose
strong restrictions at the ends of the angular-velocity range: for compactly
supported unit-vorticity vortex patches, a nonradial $V$-state must have
$0<\Om<1/2$ \cite{GPSY21}.  Fan, Wang, and Zhan
extended this rigidity to vortex patches whose boundaries are merely
finite unions of Jordan curves and, more generally, to compactly supported,
possibly sign-changing multipatch vorticities \cite{FWZ25}.  See
also the weak-topology rigidity theorem near the Rankine vortex in the simply
connected class
\cite{Hua25}.
Computer-assisted methods have also produced analytic, simply connected,
sixfold-symmetric nonconvex $V$-states beyond the previously known explicit
families and local bifurcation constructions
\cite{CG25}.

For doubly connected $V$-states the radial states are annuli.  Nonannular
branches were constructed in \cite{dHMV16}, and
degenerate bifurcation regimes were analyzed in
\cite{HM16b,HR17,WXZ24}.  Related rigidity of
elliptic interfaces was proved in \cite{HMV15}.  There is
also a substantial theory in bounded radial fluid domains: steady vortex patches in
a disk were studied analytically and numerically in
\cite{dHHM16}, while a recent general framework
covers, among other models, Euler $V$-states in disks, annuli, and exteriors
of disks \cite{HXX26b}.  Variational constructions of
rotating vortex patches in a disk were obtained in
\cite{CWWZ21}, and sharp rigidity for disk patches whose
boundaries are finite unions of Jordan curves was established in
\cite{FWZ24}.

Another line of work concerns several disjoint vortical components.  Early
computations include the equal co-rotating vortex pairs studied by Saffman and
Szeto
\cite{SS80} and Dritschel's equilibria of two to eight uniform
vortices \cite{Dri85}.  An early general desingularization
result for nondegenerate point-vortex systems is due to Wan
\cite{Wan88}.  Rigorous desingularization constructions now
produce co-rotating and counter-rotating vortex-patch configurations from
nondegenerate point-vortex equilibria
\cite{Tur85,HM17,Gar21,HW22}; local
vortex-pair branches have also been continued globally
\cite{GH23}.  Configurations with several disjoint doubly
connected vortical components were obtained in \cite{CQZZ21}.
Among these results, Garc\'ia's regular $N$-polygon family
\cite{Gar21} supplies the inner template used in our proof.  Its vortical
region, however, is disconnected.  For a recent survey of periodic vortex
patches and further references, see \cite{Gar26}.

\subsection{Genuine holes and the relation to earlier computations}

The distinction between a genuine hole and an additional vorticity level is
essential.  We seek domains of the form
\begin{equation}\label{eq:004}
 D=G\setminus\bigcup_{j=1}^N\overline{H_j},
 \qquad
 \omega=\one_D,
\end{equation}
where $G$ is simply connected, the $H_j$ are pairwise disjoint and simply connected, and
$\overline{H_j}\Subset G$.  Equivalently,
\begin{equation}\label{eq:005}
 \one_D=\one_G-\sum_{j=1}^N\one_{H_j}.
\end{equation}
The coefficient of every subtracted component is exactly one.  This is the
feature that turns a co-rotating polygonal configuration of positive vortex
patches into a collection of zero-vorticity holes.  All boundary curves are
therefore interfaces of a single characteristic-function vorticity and are
coupled through the same nonlocal velocity field.  By contrast, a union of
disjoint positive vortex patches has a disconnected vortical region, whereas
in a multilevel configuration
\[
 \one_G-\gamma\sum_{j=1}^N\one_{H_j}
\]
the vorticity in an inner component equals $1-\gamma$ and need not vanish.
The problem considered here has neither of these two structures.
The present use of the word ``hole'' should also be distinguished from
rotating hollow vortices, or $H$-states.  Those are potential-flow
free-boundary configurations with circulation around a hollow core, rather
than characteristic-function vorticities of the form $\one_D$; see
\cite{CNK21}.

Numerical evidence for configurations with genuine holes is more limited.
Wu, Overman, and Zabusky developed contour-dynamics algorithms for rotating
and translating piecewise-constant $V$-states \cite{WOZ84}, and Overman
subsequently investigated the local geometry of limiting states, including
doubly and triply connected configurations \cite{Ove86}. These works do not
establish a rigorous existence theorem for smooth triply connected
$V$-states. A particularly relevant dynamical computation is due to Velasco
Fuentes \cite{VF13}, who evolved two symmetric holes inside a Rankine vortex,
including the genuine zero-vorticity case. In an elastic-interaction regime,
the holes rotate around one another with nearly uniform angular velocity,
while their shapes and separation undergo small oscillations; the resulting
motion is therefore not an exact relative equilibrium.

More generally, genuinely non-rigid recurrent vortex patch dynamics have
recently been constructed rigorously. These include time quasi-periodic
patches near Kirchhoff ellipses \cite{BHM23}, quasi-periodic motions near
annular patches with one hole \cite{HHR24}, and time-periodic leapfrogging
motions of concentrated planar vortex patches \cite{HHM25}. These solutions
have geometries different from those considered here. Our result is
complementary: it constructs exact uniformly rotating relative equilibria
with an arbitrary prescribed number of genuine zero-vorticity holes.

A closely related recent rigorous construction is the work of Baroncini, Cantero,
Garc\'ia, Hassainia, and Mateu \cite{BCGHM26}.  They construct a
rotating outer vortex patch containing several highly concentrated inner
components.  Their vorticity has the form
\[
 \one_{D_1^\eps}-\frac1{\pi\eps^2}
 \sum_{j=1}^N\one_{D_{2,j}^\eps}.
\]
Thus the vorticity on an inner component equals
$1-(\pi\eps^2)^{-1}$ in their local regime and is not zero.  Their analytical
and numerical configurations are consequently multilevel, sign-changing
states rather than unit-vorticity characteristic-function patches of
the form $\omega=\one_D$.
They also identify continuation to the parameter at which this inner value
vanishes as an open global problem.  Our construction follows a different
local route: the coefficient in \eqref{eq:005} is
exactly one from the beginning.

A connected radially symmetric vortex patch domain is necessarily a disk or
an annulus and therefore has at most one bounded complementary component.
Hence no connected vortex patch domain with $N\ge2$ separated holes is
radial, so there is no radial state from which to bifurcate.
Theorem~\ref{thm:main} constructs such states directly by coupling a
co-rotating polygon at the inner scale to a perturbed Rankine boundary at the
outer scale.  To the best of our knowledge, for every prescribed
$N\ge2$ this is the first rigorous construction for the planar Euler equation of a bounded, connected, uniformly rotating characteristic-function vortex patch
$\omega=\one_D$ such that $\R^2\setminus D$ has exactly $N$ bounded
connected components.
The resulting angular velocities lie in the rigidity-compatible interval
$(0,1/2)$.

\subsection{Main results}

We identify $\R^2$ with $\C$.  For $N\ge2$, put
\[
 \zeta=e^{2\pi\mathrm{i}/N},
\]
and define the order-$2N$ dihedral group
\[
 D_N:=\{z\mapsto\zeta^jz,\ z\mapsto\zeta^j\bar z:
 0\le j\le N-1\}.
\]
Thus a set $E\subset\C$ is $D_N$-invariant when
$\zeta E=E$ and $\bar E=E$; the analogous convention is used for scalar
functions. With this notation, our main existence result is the following.

\begin{theorem}\label{thm:main}
Let $N\ge2$, let $\ell>0$, and let $\alpha\in(0,1)$.  There exist a
microscopic parameter $\rho_*>0$, a fixed $D_N$-invariant union
\[
 P=P_1\cup\cdots\cup P_N
\]
of pairwise disjoint simply connected $C^{2-\alpha}$ domains, with
fundamental boundary chart $p_0$, numbers $\delta\in(0,1/(2N))$,
$\Om_*=1/2-\delta$, and $\eps_0>0$, and families
\[
 \bigl\{G_\eps,H_{1,\eps},\ldots,H_{N,\eps},\Om_\eps\bigr\}_{0<\eps<\eps_0}.
\]
The vorticity $\one_P$ is a co-rotating configuration of $N$ unit-vorticity
vortex patches with angular velocity $\delta$.  The families above
have the following properties.
\begin{enumerate}[label=\textup{(\roman*)}]
\item The sets $G_\eps,H_{1,\eps},\ldots,H_{N,\eps}$ are simply connected
$C^{2-\alpha}$ domains.  The closures of the $H_{j,\eps}$ are pairwise
disjoint and are compactly contained in $G_\eps$.
\item The domain
\begin{equation}\label{eq:006}
  D_\eps
  =G_\eps\setminus
   \bigcup_{j=1}^N\overline{H_{j,\eps}}
\end{equation}
is connected, and $\R^2\setminus D_\eps$ has exactly $N$ bounded connected
components.  Equivalently, $\widehat{\C}\setminus D_\eps$ has exactly
$N+1$ connected components.
\item The domain $D_\eps$ is $D_N$-invariant.
\item One has $\Om_\eps\in(0,\frac12)$ and
\begin{equation}\label{eq:007}
  \omega_\eps(t,x)
  =\one_{D_\eps}(R_{-\Om_\eps t}x)
\end{equation}
is a global Yudovich solution of \eqref{eq:euler}.
\item There are a radial profile $R_\eps$ and an inner boundary chart
$p_\eps$ such that
\[
 \pa G_\eps
 =\{R_\eps(\theta)e^{\mathrm{i}\theta}:\theta\in\T\},
 \qquad
 \eps^{-1}\pa H_{j,\eps}
 =\{\zeta^{j-1}p_\eps(w):w\in\T\},
\]
as in Section~\ref{sec:01}.  The maps
$\eps\mapsto R_\eps$, $\eps\mapsto p_\eps$, and $\eps\mapsto\Om_\eps$
extend continuously to $\eps=0$, with limiting values $1$, $p_0$, and
$\Om_*$, respectively.  In particular, $R_\eps\to1$ and
$p_\eps\to p_0$ in $C^{2-\alpha}$, while $\Om_\eps\to\Om_*$, as
$\eps\downarrow0$.
\end{enumerate}
In particular, $N=2$ gives a triply connected uniformly rotating vortex patch.
\end{theorem}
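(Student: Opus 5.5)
We plan to reduce the theorem to an implicit function argument in the small parameter $\eps$, with the seed configuration supplied by Garc\'ia's regular $N$-polygon family \cite{Gar21}. First we fix the inner template. Take a regular $N$-gon of unit-vorticity patches $P=P_1\cup\cdots\cup P_N$, co-rotating with angular velocity $\delta$. By Garc\'ia's construction, choosing the microscopic size $\rho_*$ small makes the configuration nondegenerate once the linearization is augmented by the angular velocity; this holds within the $D_N$-symmetric class. The parameter $\delta$ is determined by $N$ and $\rho_*$. For $\rho_*$ small it is close to the point-vortex value $\pi\rho_*^2(N-1)/(4\pi)$ measured in units of the inner scale, so $\delta\in(0,1/(2N))$.

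The ansatz is
\[
 D_\eps=G_\eps\setminus\eps\,\overline{P^\eps},
 \qquad \pa G_\eps=\{R_\eps(\theta)e^{\mathrm i\theta}\}, \quad R_\eps=1+r,
\]
where $P^\eps$ has boundary chart $p_\eps=p_0+q$. Then $\psi_{D}=\psi_{G}-\psi_{\eps P^\eps}$.

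The key exact identity comes from the inner equation. On $\pa(\eps P^\eps)$ we need $\psi_G-\psi_{\eps P^\eps}-\frac{\Om}{2}|x|^2$ to be constant on each component. For the unperturbed disk, $\psi_{\D}=\frac{|x|^2}{4}+\text{const}$ inside. Hence the inner condition becomes
\[
 \psi_{\eps P^\eps}+\Bigl(\tfrac{\Om}{2}-\tfrac14\Bigr)|x|^2=\text{const}.
\]
After rescaling $x=\eps y$ this reads
\[
 \psi_{P^\eps}(y)-\tfrac{1/2-\Om}{2}|y|^2=\text{const},
\]
which is exactly the co-rotating equation for $P^\eps$ with angular velocity $1/2-\Om$. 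So with $G=\D$, $P^\eps=P$ and $\Om=\Om_*=1/2-\delta$, every inner boundary equation holds exactly. This is the complement identity. The outer equation on $|x|=1$ is violated only by the exterior field of $\eps P$. Expanding $\psi_{\eps P}$ in multipoles outside, the monopole is radial and harmless. $D_N$ symmetry kills all modes except multiples of $N$. The first surviving nonradial term is $\Rea(c_N\eps^{N+2}z^{-N})$; the extra $\eps^2$ comes from the area.

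Next comes the renormalized system. We write the outer equation as $\cF(r,q,\Om,\eps)=0$, using Burbea's contour functional on $\T$ with $D_N$-symmetric perturbations. We set $r=\eps^{N+2}\tilde r$ and divide by $\eps^{N+2}$. The inner equation $\cG(r,q,\Om,\eps)=0$ is written in rescaled coordinates. An outer perturbation of size $\eps^{N+2}$ with mode $N$ produces inside an interior harmonic field $\sim\eps^{N+2}|x|^N$. At $|x|\sim\eps$ this has size $\eps^{2N+2}$; after dividing by $\eps^2$ it is $\eps^{2N}$ in normalized units. So the feedback vanishes as $\eps\to0$.

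The limiting system at $\eps=0$ is affine and lower triangular. The outer block is the linearized Rankine operator acting on $\tilde r$, with right-hand side determined by $c_N$. This operator is invertible on $D_N$-symmetric modes $kN$, $k\ge1$, at $\Om_*$: its eigenvalues vanish only at $\Om=\frac{n-1}{2n}$, and $\Om_*=1/2-\delta>\frac{n-1}{2n}$ whenever $\delta<1/(2N)\le1/(2n)$ for all $n\ge N$. The inner block is the augmented linearization in $(q,\Om)$ of Garc\'ia's configuration, which is invertible by the nondegeneracy. We then apply the implicit function theorem in $C^{2-\alpha}\times C^{1-\alpha}$-type spaces in the spirit of \cite{HM17,Gar21}. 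This gives continuous branches $(\tilde r_\eps,q_\eps,\Om_\eps)$ with limits $(\tilde r_0,0,\Om_*)$, which yields (v).

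The remaining properties follow from the construction. Properties (i)--(iii) follow from smallness and symmetry: the holes have size $\eps$ and are separated, and the boundary is $C^{2-\alpha}$. For (iv), the result of \cite{GPSY21} or direct continuity gives $\Om_\eps\in(0,1/2)$. Yudovich theory plus the stationary equation gives the global solution.

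The main obstacle will be the functional setup. We must show that $\cF/\eps^{N+2}$ and $\cG$ extend $C^1$ (at least continuously in $\eps$, $C^1$ in the unknowns) to $\eps=0$. This requires careful regularity of the multipole remainder, which is $O(\eps^{2N+2})$ after the $N$-th term since the next symmetric mode is $2N$. It also requires handling the coupling between scales uniformly in Hölder norms. We would try explicit Taylor expansions of the logarithmic kernel, $\log|\eps y-z|$ for $|z|\approx1$, with integral remainders.
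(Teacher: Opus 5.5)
There is a genuine gap in the invertibility of the outer block. On the $D_N$-symmetric mode $\cos(kN\theta)$, the linearized Rankine operator at $\Om_*=\frac12-\delta$ acts by the multiplier $\frac12-kN\delta$; this is the operator $L_\delta$ of the paper. It is singular exactly when $\delta=1/(2kN)$ for some $k\ge1$, i.e.\ when $\Om_*$ equals a Burbea value $\frac{kN-1}{2kN}$.

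Your justification ``$\delta<1/(2N)\le 1/(2n)$ for all $n\ge N$'' has the inequality backwards. For $n\ge N$ one has $1/(2n)\le 1/(2N)$, so $\delta<1/(2N)$ excludes only $k=1$. The Burbea values accumulate at $\frac12$, and $\Om_*$ is close to $\frac12$, so $\Om_*$ sits among them. The multiplier $\frac12-kN\delta$ is positive for $k<1/(2N\delta)$, negative beyond that, and vanishes if $2kN\delta=1$ for some integer $k\ge2$. At such a resonance $L_\delta$ has kernel $\cos(kN\theta)$ and cokernel $\sin(kN\theta)$ inside your symmetric spaces, and the implicit function theorem cannot be applied. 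Taking $\rho_*$ small does not avoid this. Since $\delta_\rho=\frac{N-1}{4\ell^2}\rho^2+O(\rho^3)$ decreases continuously to $0$, it passes through $1/(2kN)$ for every large $k$, so resonant values of $\rho$ accumulate at $0$.

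The missing step is a nonresonant selection of the seed, as in Proposition~\ref{prop:02}(ii). Because $\delta_\rho'=\frac{N-1}{2\ell^2}\rho+O(\rho^2)>0$, each value $1/(2kN)$ is attained at most once. Hence the resonant set $\mathcal E_N$ is discrete in $(0,\rho_2)$, with $0$ its only possible accumulation point, and one fixes $\rho_*\notin\mathcal E_N$. For this fixed $\delta$, $L_\delta^{-1}$ is $\delta^{-1}\pa_\theta^{-1}$ plus a multiplier of order $-2$, hence bounded from $Y_N$ onto $X_N$. However, its norm blows up as $\delta$ approaches a resonance. So $\rho_*$, and with it $P$, $p_0$, $\delta$ and every constant of the fixed-point argument, must be frozen before $\eps\to0$; no uniformity in $\rho_*$ is available. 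With this inserted, your scheme goes through.

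One secondary correction: under your scaling the $\eps=0$ system is neither affine nor lower triangular, because only the outer equation is divided by $\eps^{N+2}$ and $q$, $\Om$ are not rescaled. The inner limit is the nonlinear template equation $\cG(\frac12-\Om,q)=0$. It does not see $\tilde r$, since the feedback is $O(\eps^{2N})$. The outer limit $L_{1/2-\Om}\tilde r+P_N(q)=0$ does depend on $(q,\Om)$. The Jacobian is therefore upper triangular with diagonal blocks $L_\delta$ and $\mathscr A_P$, which is still invertible, so your scaling is a valid alternative for existence. The paper also sets $q=\eps^{2N}h$, $\Om=\Om_*+\eps^{2N}\lambda$ and divides the inner equation by $\eps^{2N}$. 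This makes the limit affine and lower triangular with coupling $C_N$, and additionally yields the $\eps^{2N}$ corrections of Proposition~\ref{prop:01}.
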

In addition to existence, the construction determines the first
nontrivial corrections to the geometry and the angular velocity.
\begin{proposition}
\label{prop:01}
For the family in Theorem~\ref{thm:main}, there are a real number
$\lambda_*$, a function $h_*\in X_P$ (with $X_P$ defined in
\eqref{eq:XP}), and a constant $a_N>0$ such that, as $\eps\downarrow0$,
\begin{align}
 R_\eps(\theta)
 &=1+a_N\eps^{N+2}\cos(N\theta)
   +O_{C^{2-\alpha}}(\eps^{N+4}),
 \label{eq:008}\\
 \pa H_{j,\eps}
 &=\left\{\eps\zeta^{j-1}
 \left[p_0(w)+\rho_*^2\eps^{2N}h_*(w)
 +o_{C^{2-\alpha}}(\eps^{2N})\right]:w\in\T\right\},
 \label{eq:009}\\
 \Om_\eps
 &=\Om_*+\lambda_*\eps^{2N}+o(\eps^{2N}).
 \label{eq:010}
\end{align}
Here
\begin{equation}\label{eq:main-aN}
 a_N=\frac{M_N(P)}{\pi(1-2N\delta)},
 \qquad
 M_N(P):=\int_P z^N\,\mathrm{d}A(z)>0.
\end{equation}
Thus, to leading order, the maxima of the outer-boundary correction lie on
the rays through the polygonal conformal centers.  More explicitly, for some
$h_{\rho_*}\in X_P$,
\[
 p_0(w)=\ell+\rho_*\bigl(w+\rho_*h_{\rho_*}(w)\bigr).
\]
The conformal center and conformal radius mean, respectively, the constant
term and the modulus of the leading coefficient in the normalized exterior
Laurent chart.  In this sense, $H_{j,\eps}$ has conformal center
$\eps\ell\zeta^{j-1}$ and conformal radius $\eps\rho_*$.  Finally, as
$\eps\downarrow0$,
\begin{equation}\label{eq:011}
 |D_\eps\mathbin\triangle\D|
 =\eps^2|P|+4a_N\eps^{N+2}+O(\eps^{N+4}).
\end{equation}
\end{proposition}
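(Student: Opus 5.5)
The plan is to read off each expansion from the implicit-function construction behind Theorem~\ref{thm:main}. We expand the exact solution in the two separately renormalized unknowns: the outer unknown $r_\eps$ with $R_\eps=1+\eps^{N+2}r_\eps$, and the inner unknown $h_\eps$ with $p_\eps=p_0+\rho_*^2\eps^{2N}h_\eps$. The angular velocity is treated the same way, $\Om_\eps=\Om_*+\eps^{2N}\lambda_\eps$. The limiting affine system at $\eps=0$ has a lower-triangular derivative. So its unique solution $(r_0,h_*,\lambda_*)$ is obtained by first solving the outer block and then the inner block, and continuity of the branch in $\eps$ (part (v)) gives \eqref{eq:009} and \eqref{eq:010} with $o(\eps^{2N})$ remainders.

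\emph{Outer boundary.} By the exact complement identity, once the holes are placed, the only defect on the unit circle is the exterior field of $-\eps^2\one_{\eps P}$ minus its monopole part. Since $P$ is $D_N$-invariant, the moments $\int_P z^k\,\mathrm{d}A$ vanish for $1\le k<N$, and the $k=N$ moment is real. The first multipole is therefore
\[
-\frac{\eps^{N+2}}{2\pi N}\,\Rea\bigl(M_N(P)\,z^{-N}\bigr).
\]
Evaluating it on $|z|=1$ gives a forcing $\cos(N\theta)$ of size $\eps^{N+2}M_N(P)/(2\pi N)$. The Rankine linearized operator on the mode $\cos(N\theta)$ has symbol proportional to $N(\tfrac12-\Om_*)-\tfrac12$. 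With $\Om_*=1/2-\delta$ this equals $N\delta-\tfrac12=-(1-2N\delta)/2$, which is nonzero because $\delta<1/(2N)$. Inverting this symbol gives $r_0=a_N\cos(N\theta)$ with $a_N=M_N(P)/(\pi(1-2N\delta))$; the sign and normalization are to be checked against the paper's operator convention.

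The next multipole is order $N+1$. By $D_N$ symmetry only the multiples of $N$ survive in the expansion $\Rea(\int_P z^k\,\mathrm{d}A\, z^{-k})$. The actual next error comes instead from two sources: the $\eps^2$ self-interaction of the hole region expanded against the disk, and the quadratic terms $\eps^{2(N+2)}$. I expect these to give $O(\eps^{N+4})$, and this yields \eqref{eq:008}. Positivity $M_N(P)>0$ comes from $p_0=\ell+\rho_*(w+\dots)$: to leading order in $\rho_*$ we have $M_N(P)\approx N\pi\rho_*^2\ell^N>0$, which holds once $\rho_*$ is small.

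\emph{Inner boundary and angular velocity.} Feeding $r_0$ back, the outer correction generates at the hole scale a strain whose leading part is of size $\eps^{N+2}\cdot\eps^{N-2}=\eps^{2N}$. Together with the shift $\Om_\eps-\Om_*$, this forcing enters the angular-velocity-augmented linearization of the seed configuration. That block is invertible, which gives $(h_*,\lambda_*)$. The explicit form of $p_0$ is Garc\'ia's parametrization of the polygon, from which the conformal center and radius are read off.

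\emph{Area of the symmetric difference.} We have
\[
|D_\eps\triangle\D|=|G_\eps\triangle\D|+\eps^2|P|+O(\eps^{2N+2}),
\]
since the holes lie inside $\D$ and $G_\eps$ agrees with $\D$ near them. Moreover
\[
|G_\eps\triangle\D|=\tfrac12\int|R_\eps^2-1|\,\mathrm{d}\theta=\eps^{N+2}a_N\int|\cos N\theta|\,\mathrm{d}\theta+O(\eps^{N+4}),
\]
and $\int|\cos N\theta|\,\mathrm{d}\theta=4$. The main obstacle is the remainder bookkeeping in \eqref{eq:008}: one must show that the next outer correction really is $O(\eps^{N+4})$, rather than $O(\eps^{2N+2})$ or $O(\eps^{N+3})$. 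This requires tracking the $\eps^2$ expansion of the hole field together with the $D_N$ symmetry, which I would attempt by expanding the complement identity to second order.
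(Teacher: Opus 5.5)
Your leading-order analysis agrees with the paper. This covers the value $a_N=M_N(P)/(\pi(1-2N\delta))$ (your sign is correct, as solving $L_\delta g_*+P_N(0)=0$ with \eqref{eq:080} and \eqref{eq:PN-def} confirms), the $\eps^{2N}$ feedback scale, and the lower-triangular limit system for $(h_*,\lambda_*)$. It also covers the decomposition $|D_\eps\triangle\D|=|G_\eps\triangle\D|+\sum_j|H_{j,\eps}|$ with hole area $\eps^2|P|+O(\eps^{2N+2})$.

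The gap is the remainder in \eqref{eq:008}. You only say you \emph{expect} everything beyond $a_N\eps^{N+2}\cos(N\theta)$ to be $O(\eps^{N+4})$ in $C^{2-\alpha}$, and you flag this yourself as the main obstacle. The error term in \eqref{eq:011} rests on the same missing estimate. Indeed, with $R_\eps^2=1+2\eps^{N+2}g_\eps$ one has $|G_\eps\triangle\D|=\eps^{N+2}\int_0^{2\pi}|g_\eps|\,\mathrm{d}\theta$, and replacing $g_\eps$ by $g_*$ costs $\eps^{N+2}\|g_\eps-g_*\|_{L^1}$. Your proposed route, a second-order expansion of the complement identity, does not reach this. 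That identity is exact and concerns only the inner boundaries, whereas the $\eps^{N+4}$ correction is produced on the outer boundary.

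What is missing is a uniform bound on the renormalized outer residual, followed by inversion of $L_\delta$. In Proposition~\ref{prop:03}, $t_{\rm o}^{-1}\cF_0=L_{\delta-t_{\rm i}\lambda}g+P_N(t_{\rm i}h)+\eps^2\mathscr R_{\rm o}$, with $\mathscr R_{\rm o}$ bounded in $Y_N$ uniformly on a fixed ball. The main part of $\eps^2\mathscr R_{\rm o}$ is the hole monopole $\frac{\eps^2|P(q)|}{2\pi}\log|z|$. It is constant on $\pa\D$, but its tangential derivative on the deformed circle is $\frac{\eps^2|P(q)|}{2\pi}\,\frac{t_{\rm o}g'}{1+2t_{\rm o}g}$, of relative size $\eps^2$. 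For $N=2$, the $2N$-th multipole, of size $\eps^{2N+2}=\eps^{N+4}$, enters at the same order. The Rankine Taylor remainder $O(t_{\rm o}^2)$ and the re-evaluated $N$-th multipole are smaller.

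The rate then follows in three steps:
\begin{enumerate}
\item Subtract $L_\delta g_*+P_N(0)=0$ from the outer equation along the branch.
\item Use that $(g_\eps,h_\eps,\lambda_\eps)$ stays bounded, that $(L_{\delta-t_{\rm i}\lambda}-L_\delta)g=-t_{\rm i}\lambda\,\pa_\theta g$, and that $P_N$ is $C^1$. This gives $\|L_\delta(g_\eps-g_*)\|_{Y_N}\le C(\eps^{2N}+\eps^2)$.
\item Apply the nonresonant bound \eqref{eq:085} to get $\|g_\eps-g_*\|_{X_N}\le C\eps^2$.
\end{enumerate}
Only with this rate do \eqref{eq:008} and the $O(\eps^{N+4})$ in \eqref{eq:011} follow.

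Two smaller points:
\begin{enumerate}
\item \eqref{eq:009}--\eqref{eq:010} need the renormalized convergence $(h_\eps,\lambda_\eps)\to(h_*,\lambda_*)$ from Proposition~\ref{prop:05}. Part (v) of Theorem~\ref{thm:main} says nothing at order $\eps^{2N}$.
\item The conformal centers and radii of $H_{j,\eps}$ are preserved because $\rho_*^2\eps^{2N}h_\eps\in X_P$ has no $w^0$ or $w^1$ Laurent term. The form of $p_0$ alone does not give this.
\end{enumerate}
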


\begin{remark}
\label{rem:01}
The two small parameters appearing above play different roles in the
construction. 
Proposition~\ref{prop:02} supplies arbitrarily small nonresonant
microscopic values $\rho>0$.  We first choose one such value $\rho_*$.
This fixes $P$, $p_0$, $\delta$, the inverse bounds of the two diagonal
linearized operators, and all constants in the gluing argument.  For this fixed
seed there is an $\eps_0=\eps_0(\rho_*)>0$ such that the conclusions above
hold for every $0<\eps<\eps_0$.  Thus only $\eps$ tends to zero in the
solution family, and no estimate uniform as $\rho_*\to0$ is used.

If a sequence of nonresonant seeds is chosen with $\rho_*\downarrow0$, then
\begin{equation}\label{eq:012}
 \delta=\frac{N-1}{4\ell^2}\rho_*^2+O(\rho_*^3),
 \qquad
 M_N(P)=N\pi\ell^N\rho_*^2+O(\rho_*^4),
\end{equation}
and consequently
\begin{equation}\label{eq:013}
 \Om_*=\frac12-\frac{N-1}{4\ell^2}\rho_*^2+O(\rho_*^3),
 \qquad
 a_N=N\ell^N\rho_*^2+O(\rho_*^4).
\end{equation}
\end{remark}

The geometry described in Theorem~\ref{thm:main} is illustrated schematically
in Figure~\ref{fig:01}.

\begin{figure}[!t]
\centering
\begin{tikzpicture}[
  x=1cm,y=1cm,
  line cap=round,
  line join=round,
  >={Latex[length=2mm,width=1.35mm]},
  boundary/.style={line width=.75pt,draw=black},
  leader/.style={-{Latex[length=1.8mm,width=1.2mm]},thin},
  every node/.style={font=\small,inner sep=1.5pt}
]
  \def\Rout{2.62}
  \def\outeramp{0.042}
  \def\centerrad{0.93}
  \def\holerad{0.235}

  \path[fill=black!6]
    plot[domain=0:360,samples=241,variable=\t]
      ({\Rout*(1+\outeramp*cos(5*\t))*cos(\t)},
       {\Rout*(1+\outeramp*cos(5*\t))*sin(\t)}) -- cycle;
  \draw[boundary]
    plot[domain=0:360,samples=241,variable=\t]
      ({\Rout*(1+\outeramp*cos(5*\t))*cos(\t)},
       {\Rout*(1+\outeramp*cos(5*\t))*sin(\t)}) -- cycle;

  \foreach \j in {0,...,4}{
    \pgfmathsetmacro{\ang}{72*\j}
    \path[fill=white]
      plot[domain=0:360,samples=81,variable=\t]
       ({\centerrad*cos(\ang)
          +\holerad*(1+0.055*cos(2*(\t-\ang)))*cos(\t)},
        {\centerrad*sin(\ang)
          +\holerad*(1+0.055*cos(2*(\t-\ang)))*sin(\t)}) -- cycle;
    \draw[boundary]
      plot[domain=0:360,samples=81,variable=\t]
       ({\centerrad*cos(\ang)
          +\holerad*(1+0.055*cos(2*(\t-\ang)))*cos(\t)},
        {\centerrad*sin(\ang)
          +\holerad*(1+0.055*cos(2*(\t-\ang)))*sin(\t)}) -- cycle;
  }

  \draw[-{Latex[length=2.25mm,width=1.5mm]},line width=.7pt]
    (24:3.04) arc[start angle=24,end angle=142,radius=3.04];
  \node at (84:3.29) {$\Om_\eps>0$};

  \node[align=right] (bulk) at (-1.05,1.53)
    {$D_\eps\;(\omega=1)$};
  \draw[leader] (bulk.south east) -- (-.57,1.08);

  \node[anchor=west,align=left] (hole) at (1.56,.97)
    {$H_{j,\eps}\;(\omega=0)$};
  \draw[leader] (hole.west) -- (.88,.29);

  \node[align=center] (outer) at (0,-3.02)
    {$\partial G_\eps:\quad
      R_\eps(\theta)=1+a_N\eps^{N+2}\cos(N\theta)
      +O(\eps^{N+4})$};
  \draw[leader] (outer.north east)
    to[out=30,in=-76] (1.98,-1.55);
\end{tikzpicture}
\caption{Schematic geometry of the solutions, shown with $N=5$.  The shaded
region has vorticity one and the white components are genuine zero-vorticity
holes.  The holes and the boundary deformation are enlarged for visibility
and are not drawn to scale.}
\label{fig:01}
\end{figure}

\subsection{\texorpdfstring{Proof strategy and the two scales}
{Proof strategy and the two scales}}

\begingroup
The construction begins with Garc\'ia's nondegenerate $D_N$-symmetric
co-rotating polygonal vortex-patch configuration \cite{Gar21}.  We use
amplitude homogeneity to normalize its vorticity to one, select and fix a
nonresonant microscopic value $\rho_*$, and then embed the resulting template
with the independent spatial parameter $\eps$.  Spatial dilation does not
change its angular velocity $\delta$.

The geometric core is the exact complement identity
\begin{equation}\label{eq:137}
 u_{\D\setminus\eps P}(\eps y)
 -\Bigl(\frac12-\delta\Bigr)(\eps y)^\perp
 =\eps\bigl(\delta y^\perp-u_P(y)\bigr),
\end{equation}
which follows from $u_\D(x)=x^\perp/2$ in $\D$ and
$u_{\eps P}(\eps y)=\eps u_P(y)$.  Its right-hand side is tangent to each
component of $\pa P$, so all inner boundary equations are solved exactly
before the outer circle is perturbed.

The $D_N$ symmetry eliminates the moments of orders $1,\ldots,N-1$.
Accordingly, the first nonradial exterior defect has size
$t_{\rm o}=\eps^{N+2}$.  The outer correction is harmonic near the shrinking
polygon, and its first nonconstant invariant term has degree $N$; at
$x=\eps y$ it therefore enters the normalized inner equation at the second
scale $t_{\rm i}=\eps^{2N}$.  These a priori distinct weights coincide when
$N=2$ and differ when $N\ge3$.  Keeping them separate gives the
quantitative conclusions of Proposition~\ref{prop:01}.

After the two renormalizations, the limiting equation is affine and its
derivative is lower triangular:
\begin{equation}\label{eq:138}
 \begin{pmatrix}
   L_\delta&0\\ C_N&\mathscr A_P
 \end{pmatrix}
 \begin{pmatrix}g\\(h,\lambda)\end{pmatrix}
 +\begin{pmatrix}P_N(0)\\0\end{pmatrix}=0.
\end{equation}
The nonresonant Rankine operator $L_\delta$ and the augmented template
operator $\mathscr A_P$, obtained from Garc\'ia's reduced nondegeneracy
together with the angular-velocity mode completion in
Section~\ref{sec:02}, are isomorphisms.  The parameter-dependent
implicit-function theorem therefore applies.  In contrast
with the multilevel branch of \cite{BCGHM26}, the resulting gluing
step is a local isomorphism problem in which the vorticity vanishes in every
hole from the outset.

\textbf{Organization of the paper.}
The remainder of the paper is structured as follows.
Section~\ref{sec:01} introduces the boundary charts and the coupled
equations. Section~\ref{sec:02} constructs the fixed unit-vorticity
template, Section~\ref{sec:03} presents the exact complement identity
and the two scales, and Section~\ref{sec:04} derives the renormalized
expansions. Section~\ref{sec:gluing} solves the limiting coupled system,
while Section~\ref{sec:05} establishes the geometric properties and
quantitative asymptotics.
\endgroup

\section{\texorpdfstring{Boundary equations and functional setting}
{Boundary equations and functional setting}}
\label{sec:01}

\subsection{\texorpdfstring{Rotating-boundary criterion and scaling laws}
{Rotating-boundary criterion and scaling laws}}

We first record the geometric boundary equation and the two scaling laws
used throughout the paper.  For a bounded set $E$, let
$u_E=\nabla^\perp\psi_E$.  If the boundary of $D$ is a finite disjoint union
of $C^1$ Jordan curves, then the normal-velocity condition
\eqref{eq:002} is equivalent to
\begin{equation}\label{eq:019}
 \psi_D(x)-\frac{\Om}{2}|x|^2
 \quad\text{is constant on each connected component of }\pa D.
\end{equation}
Indeed, the relative velocity is
$\nabla^\perp(\psi_D-\Om|x|^2/2)$, so its normal component vanishes precisely
when the tangential derivative of the relative stream function vanishes on
each boundary component.  The constants on different components need not
agree.

Throughout the paper we use this tangentially differentiated form of the
boundary equation \eqref{eq:002}.  It removes the independent constants on the boundary
components and also annihilates the logarithmic constant generated when the
small holes are rescaled.

We shall also use, for $\kappa>0$ and $s>0$, the identities
\begin{equation}\label{eq:020}
 u_{\kappa\one_E}=\kappa u_E,
 \qquad
 u_{sE}(sx)=s u_E(x).
\end{equation}
The first identity in \eqref{eq:020} is the linearity of the
Biot--Savart law.  The second follows by the change of variables $y=s\xi$ in
the Biot--Savart integral. 

These two scaling operations play distinct roles in the construction.
The desingularized seed initially has vorticity amplitude
$(\pi\rho_*^2)^{-1}$; lowering that amplitude to one multiplies its angular
velocity by $\pi\rho_*^2$ without changing its boundary.  The subsequent
embedding $P\mapsto\eps P$ changes its size but leaves this unit-vorticity
angular velocity unchanged.

\subsection{\texorpdfstring{Outer and inner boundary charts}
{Outer and inner boundary charts}}

Write $w=e^{\mathrm{i}\theta}\in\T$.
The outer boundary is represented by the area-preserving polar chart
\begin{equation}\label{eq:021}
  z_r(\theta)=R_r(\theta)e^{\mathrm{i}\theta},
  \qquad
  R_r(\theta)=\sqrt{1+2r(\theta)}.
\end{equation}
Following the notation of \cite{HXX26a}, define
\begin{equation}\label{eq:Xm}
  X_N=
  \left\{
  r\in C^{2-\alpha}(\T;\R):
  r(\theta)=\sum_{k\ge1}r_{kN}\cos(kN\theta),\quad r_{kN}\in\R
  \right\}.
\end{equation}
The missing zero mode fixes the outer area exactly.  Indeed, the domain enclosed
by \eqref{eq:021} satisfies
\begin{equation}\label{eq:022}
  |D_0(r)|=\frac12\int_0^{2\pi}R_r(\theta)^2\,\mathrm{d}\theta
  =\pi+\int_0^{2\pi}r(\theta)\,\mathrm{d}\theta=\pi.
\end{equation}
The corresponding target space is
\begin{equation}\label{eq:Ym}
  Y_N=
  \left\{
  h\in C^{1-\alpha}(\T;\R):
  h(\theta)=\sum_{k\ge1}h_{kN}\sin(kN\theta),\quad h_{kN}\in\R
  \right\}.
\end{equation}
The restriction to these $D_N$-symmetric spaces makes the remaining inner
boundary equations rotated copies of the fundamental one and forces the
interior harmonic feedback to start at degree $N$.  The missing zero mode
fixes the outer area, while the cosine--sine choice fixes the reflection phase
of the domain and residual.

The inner template will be fixed in Section~\ref{sec:02}.  We record here
the chart used near it.  One fundamental component is parametrized by
the positively oriented boundary trace of a normalized exterior
conformal map
$p_0\in C^{2-\alpha}(\T;\C)$, symmetric with respect to the real axis.  The
template supplied by Proposition~\ref{prop:02} has
\[
 p_0(w)=\ell+\rho_*\bigl(w+\rho_*h_{\rho_*}(w)\bigr),
\]
where $\rho_{\ast}$ is small and $h_{\rho_*}$ belongs to the Laurent class $X_P$
defined in \eqref{eq:XP} below.
Thus $p_0(\T)$ is a small, nearly circular component with conformal center
$\ell$ and conformal radius $\rho_*$.  The subscript in $p_0$ denotes the base
point $q=0$ of the fixed chart, not the singular limit $\rho=0$.
Nearby components are parametrized by
\begin{equation}\label{eq:023}
  p_q(w)=p_0(w)+\rho_*^2q(w),
  \qquad
  q(w)=\sum_{n\ge1}a_n w^{-n},
  \qquad a_n\in\R,
\end{equation}
with $q$ small in $C^{2-\alpha}$.  The factor $\rho_*^2$ agrees with the
normalized chart in the desingularization theorem and remains fixed throughout
the gluing argument.  The absence of a constant term and of a change
in the leading conformal coefficient fixes the conformal center and conformal
radius.
\begingroup
The Laurent series in \eqref{eq:023} extend holomorphically to
$|w|>1$ and satisfy
\[
 p_q(w)=\rho_*w+O(1)\qquad (w\to\infty),
 \qquad \rho_*>0.
\]
After decreasing the $X_P$-neighborhood, $p_q'=\frac{\mathrm{d} p_q}{\mathrm{d} w}$ remains nonzero on
$|w|\ge1$, and the boundary restriction $p_q|_{\T}$ is a homeomorphism onto
a Jordan curve $\Gamma_q$.  For $z\notin\Gamma_q$, apply the argument
principle on
\[
 A_{\eta,R}:=\{1+\eta<|w|<R\}.
\]
The outer boundary of this annulus is oriented counterclockwise and its inner
boundary clockwise.  Hence, if both circles are parametrized
counterclockwise, the zero count is the winding number on $|w|=R$ minus that
on $|w|=1+\eta$.  The first winding number is one for all sufficiently large
$R$; as $\eta\downarrow0$, the second tends to one for
$z\in\operatorname{Int}(\Gamma_q)$ and to zero for
$z\in\operatorname{Ext}(\Gamma_q)$.  Letting first $R\to\infty$ and then
$\eta\downarrow0$ gives, with zeros counted with multiplicity,
\[
 N_{\{|w|>1\}}(p_q-z)
 =\begin{cases}
  1,&z\in\operatorname{Ext}(\Gamma_q),\\
  0,&z\in\operatorname{Int}(\Gamma_q).
 \end{cases}
\]
Consequently $p_q$ maps $\{|w|>1\}$ biholomorphically onto the unbounded
component $\operatorname{Ext}(\Gamma_q)$.  Thus the terms ``conformal
center'' and ``conformal radius'' above refer to the actual normalized
exterior conformal map, not only to a formal Laurent expansion.
\endgroup
The other components are
\begin{equation}\label{eq:024}
  p_{q,j}(w)=\zeta^{j-1}p_q(w),
  \qquad 1\le j\le N.
\end{equation}
For the $j$-th component, precomposing $p_{q,j}$ with
$w\mapsto\zeta^{-(j-1)}w$ restores a positive leading coefficient.  Its
normalized exterior conformal center and radius are therefore
$\ell\zeta^{j-1}$ and $\rho_*$, respectively.
We denote their interiors by $P_j(q)$ and put
\begin{equation}\label{eq:Pq}
  P(q)=\bigcup_{j=1}^{N}P_j(q).
\end{equation}
The inner domain and target spaces are
\begin{align}
  X_P&=
  \left\{
  q\in C^{2-\alpha}(\T;\C):
  q(w)=\sum_{n\ge1}a_nw^{-n},\ a_n\in\R
  \right\},\label{eq:XP}\\
  Y_P&=
  \left\{
  h\in C^{1-\alpha}(\T;\R):
  h(e^{\mathrm{i}\theta})=\sum_{n\ge1}b_n\sin(n\theta),\ b_n\in\R
  \right\}.
  \label{eq:YP}
\end{align}
As usual, these Fourier descriptions are understood as closed symmetry subspaces
of the indicated H\"older spaces.

\subsection{\texorpdfstring{Symmetry, area, and complex moments}
{Symmetry, area, and complex moments}}

We use the following complex form of the Cauchy--Green formula.  If $E$ is a
bounded $C^1$ domain, $\pa E$ is positively oriented, and
$\Phi\in C^1(\overline E)$, then
\begin{equation}\label{eq:025}
 \pa_{\bar z}:=\frac12(\pa_{x_1}+\mathrm{i}\pa_{x_2}),
 \qquad
 \int_E\pa_{\bar z}\Phi(z)\,\mathrm{d}A(z)
   =\frac1{2\mathrm{i}}\int_{\pa E}\Phi(z)\,\mathrm{d}z.
\end{equation}

\begingroup
\begin{lemma}
\label{lem:02}
On every fixed sufficiently small shape ball in $X_P$, the area map
$\mathscr A(q):=|P(q)|$ and, for every $n\ge1$, the moment map
\begin{equation}\label{eq:026}
 M_n(q):=\int_{P(q)}z^n\,\mathrm{d}A(z)
\end{equation}
are Fr\'echet $C^1$.  They satisfy
\begin{equation}\label{eq:027}
 M_n(q)=0\quad\text{if }N\nmid n,
 \qquad M_{kN}(q)\in\R.
\end{equation}
Moreover, there are $R_P>1$ and $C>0$, independent of $n$ and of $q$ in
the ball, such that
\begin{equation}\label{eq:028}
 \begin{aligned}
 |\mathscr A(q)|+\|D_q\mathscr A(q)\|_{\mathcal L(X_P,\C)}&\le C,\\
 |M_n(q)|&\le C R_P^n,\\
 \|D_qM_n(q)\|_{\mathcal L(X_P,\C)}&\le C(1+n)R_P^n.
 \end{aligned}
\end{equation}
\end{lemma}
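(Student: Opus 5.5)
The plan is to turn every area and moment integral into a boundary integral by means of the Cauchy--Green formula \eqref{eq:025}. We then use the explicit dependence of the boundary parametrization $p_{q,j}=\zeta^{j-1}(p_0+\rho_*^2q)$ on $q$.

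Take $\Phi(z)=\bar z z^n/1$. Then $\pa_{\bar z}\Phi=z^n$, so for each component
\[
 \int_{P_j(q)}z^n\,\mathrm{d}A=\frac1{2\mathrm{i}}\int_\T \overline{p_{q,j}(w)}\,p_{q,j}(w)^n\,p_{q,j}'(w)\,\mathrm{d}w .
\]
The case $n=0$ gives the area. We first fix the ball: by the discussion after \eqref{eq:023}, on a small ball in $X_P$ each $p_q|_\T$ is a positively oriented $C^{2-\alpha}$ Jordan curve bounding $P_1(q)$. The rotated curves stay pairwise disjoint, because $p_0(\T)$ lies near the circle of radius $\rho_*$ about $\ell$ and $C^{2-\alpha}$ perturbations move it only slightly. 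Hence \eqref{eq:025} applies and $M_n(q)=\sum_j(\cdots)$.

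\textbf{Symmetry \eqref{eq:027}.} Rotation invariance $\zeta P(q)=P(q)$ gives $M_n=\zeta^nM_n$ after the change of variables $z\mapsto\zeta z$, so $M_n=0$ unless $N\mid n$. The real coefficients of $p_0$ and $q$ give $\overline{p_q(\bar w)}=p_q(w)$, hence $\overline{P(q)}=P(q)$. Substituting $z\mapsto\bar z$ then shows $\overline{M_n}=M_n$, so $M_{kN}\in\R$.

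\textbf{$C^1$ regularity.} The integrand is a polynomial in $p_q,\overline{p_q},p_q'$, and $q\mapsto p_q$ is affine and continuous $X_P\to C^{2-\alpha}\subset C^1$. The map $(f,g)\mapsto\int_\T \bar f f^n g\,\mathrm{d}w$ is a continuous polynomial, hence smooth, on $C^0\times C^0$. Composing shows that $\mathscr A$ and $M_n$ are Fr\'echet $C^\infty$, in particular $C^1$. The derivative is
\[
 D_qM_n(q)[\dot q]=\frac{\rho_*^2}{2\mathrm{i}}\sum_j\int_\T\Bigl(\overline{\dot q_j}\,p_{q,j}^np_{q,j}'+n\bar p_{q,j}p_{q,j}^{n-1}\dot q_j\,p_{q,j}'+\bar p_{q,j}p_{q,j}^n\dot q_j'\Bigr)\mathrm{d}w .
\]
The last term should be integrated by parts onto $(\bar p p^n)'$ so that only $\|\dot q\|_{C^0}$ appears. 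Alternatively one can simply use $\|\dot q'\|_{C^0}\le\|\dot q\|_{C^{2-\alpha}}$, which suffices here.

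\textbf{Bounds \eqref{eq:028}.} Set $R_P:=1+\sup_{q\in\text{ball}}\|p_q\|_{C^0}$, which is finite and greater than $1$. Every term above carries at most a factor $|p|^n\le R_P^{\,n}$, or $n|p|^{n-1}\le nR_P^{\,n}$. The other factors $|\bar p|$, $|p'|$, $\|\dot q\|_{C^1}$ are bounded uniformly on the ball, and the prefactor $N/(2\pi)$ is fixed. This yields $|M_n|\le CR_P^n$ and $\|D_qM_n\|\le C(1+n)R_P^n$ with $C$ independent of $n$ and $q$; the area bound is the case $n=0$.

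The only delicate point is the geometric one: ensuring that the ball is small enough that the curves remain disjoint Jordan curves, so that the boundary-integral representation is valid. This follows from the argument-principle discussion already given, together with the continuity of $q\mapsto p_q$ in $C^0$.
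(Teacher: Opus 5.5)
Your proposal is correct and follows essentially the same route as the paper's proof: the Cauchy--Green formula with $\Phi=z^n\bar z$ (and $\Phi=\bar z$ for the area) on each positively oriented component, rotation and reflection invariance for \eqref{eq:027}, the explicit derivative formula, and a uniform bound on the charts over a small ball to produce $R_P$. One harmless slip: the fixed constant in front of the bounds is $N\pi$ (from $\tfrac12\cdot 2\pi$ per component), not $N/(2\pi)$, and this does not affect the estimates.
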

\endgroup

\begin{proof}
\begingroup
Applying \eqref{eq:025} with $\Phi(z)=\overline z$ on each
positively oriented component gives
\begin{equation}\label{eq:029}
 \mathscr A(q)
 =\frac1{2\mathrm{i}}\sum_{j=1}^N\int_{\T}
 \overline{p_{q,j}(w)}\,p_{q,j}'(w)\,\mathrm{d}w.
\end{equation}
If $p_j=p_{q,j}$ and
$\dot p_j=\rho_*^2\zeta^{j-1}\dot q$, differentiation yields
\begin{equation}\label{eq:030}
 D_q\mathscr A(q)[\dot q]
 =\frac1{2\mathrm{i}}\sum_{j=1}^N\int_{\T}
 \bigl(\overline{\dot p_j}\,p_j'
       +\overline{p_j}\,\dot p_j'\bigr)\,\mathrm{d}w.
\end{equation}
Thus $\mathscr A$ is $C^1$ with the asserted uniform bound.

Rotation by $\zeta$ gives $M_n(q)=\zeta^nM_n(q)$, and reflection gives
$M_{kN}(q)=\overline{M_{kN}(q)}$, proving
\eqref{eq:027}.  Applying
\eqref{eq:025} with $\Phi(z)=z^n\overline z$ gives
\begin{equation}\label{eq:031}
 M_n(q)=\frac1{2\mathrm{i}}\sum_{j=1}^N
 \int_{\pa P_j(q)}z^n\overline z\,\mathrm{d}z.
\end{equation}
This boundary formula shows that each moment map is $C^1$ and avoids any
interior extension of the chart.  Its derivative is
\begin{align}\label{eq:032}
 D_qM_n(q)[\dot q]
 =\frac1{2\mathrm{i}}\sum_{j=1}^N\int_{\T}
 \bigl(n p_j^{n-1}\dot p_j\overline{p_j}p_j'
       +p_j^n\overline{\dot p_j}p_j'+p_j^n\overline{p_j}\dot p_j'\bigr)\,\mathrm{d}w.
\end{align}
On a fixed small shape ball the charts and their first derivatives are
uniformly bounded, while the component boundaries remain embedded and
separated.  Choosing $R_P>1$ with $P(q)\Subset B_{R_P}$ in that ball and
estimating the last two displays proves \eqref{eq:028}.
\endgroup
\end{proof}

\begingroup
In particular, after tangential differentiation on the circular reference
boundary, the first nonconstant multipole in the outer residual is the
$N$-th one and occurs at scale $\eps^{N+2}$.  The monopole is constant on
the reference circle; on the perturbed boundary its contribution belongs to
the $O(\eps^{N+4})$ remainder.
\endgroup

\subsection{\texorpdfstring{The coupled boundary equations}
{The coupled boundary equations}}

For $r\in X_N$ small, let $D_0(r)$ be the domain enclosed by \eqref{eq:021}.
For $\eps>0$ define
\begin{equation}\label{eq:033}
  D(\eps,r,q)
  =D_0(r)\setminus\overline{\eps P(q)}.
\end{equation}
The vorticity is therefore exactly
\begin{equation}\label{eq:034}
  \one_{D(\eps,r,q)}
  =\one_{D_0(r)}-\one_{\eps P(q)}.
\end{equation}
In particular, the vorticity in every inner component is zero, not a second
nonzero vorticity level.

Using the relative stream function \eqref{eq:003} and the boundary
criterion \eqref{eq:019} for $D(\eps,r,q)$ on the outer
boundary $x=z_r(\theta)$ gives
\[
 \pa_\theta\left[
  \psi_{D_0(r)}(z_r(\theta))-\psi_{\eps P(q)}(z_r(\theta))
  -\frac{\Om}{2}|z_r(\theta)|^2\right]=0.
\]
Since $|z_r|^2=1+2r$, the outer boundary equation is
\begin{equation}\label{eq:F0-def}
\begin{split}
  \cF_0(\eps,\Om,r,q)(\theta)
  ={}&\pa_\theta\Big[
  \psi_{D_0(r)}(z_r(\theta))
   -\psi_{\eps P(q)}(z_r(\theta))\Big]
  -\Om r'(\theta)=0.
\end{split}
\end{equation}
The last term is obtained by differentiating
$-\Om|z_r|^2/2=-\Om(1+2r)/2$.
For the inner equation we use the rescaled variable $x=\eps y$.  Set, for
$\eps>0$,
\begin{equation}\label{eq:Qeps}
  Q_{\eps,r}(y)
  =\frac{\psi_{D_0(r)}(\eps y)-\psi_{D_0(r)}(0)}{\eps^2}.
\end{equation}
The equation on the fundamental inner component is
\begin{equation}\label{eq:F1-def}
  \cF_1(\eps,\Om,r,q)(\theta)
  =\pa_\theta\left[
  Q_{\eps,r}(p_q(e^{\mathrm{i}\theta}))
  -\psi_{P(q)}(p_q(e^{\mathrm{i}\theta}))
  -\frac{\Om}{2}|p_q(e^{\mathrm{i}\theta})|^2
  \right]=0.
\end{equation}
Indeed, the scaling identity
\begin{equation}\label{eq:035}
  \psi_{\eps P(q)}(\eps y)
  =\eps^2\psi_{P(q)}(y)
   +\frac{\eps^2|P(q)|}{2\pi}\log\eps,
\end{equation}
shows that \eqref{eq:F1-def} is precisely $\eps^{-2}$ times the tangential
derivative of the physical relative stream function.  The last term in
\eqref{eq:035} is constant on every inner boundary and disappears
after tangential differentiation.

By $D_N$-symmetry, the equations on the other $N-1$ inner components are
rotated copies of \eqref{eq:F1-def}.  Thus the full rotating-boundary system is
defined, for suitable small open neighborhoods
$\mathcal U_N\subset X_N$ and $\mathcal U_P\subset X_P$, by
\begin{equation}\label{eq:036}
\begin{aligned}
 \cF:\ (0,\eps_1)\times\R\times\mathcal U_N\times\mathcal U_P&\longrightarrow Y_N\times Y_P,\\
 (\eps,\Om,r,q)&\longmapsto
 \bigl(\cF_0(\eps,\Om,r,q),\cF_1(\eps,\Om,r,q)\bigr),\\
 \cF(\eps,\Om,r,q)&=0.
\end{aligned}
\end{equation}
Here $\eps_1>0$ and the two neighborhoods are chosen so that the boundary
charts are embedded and separated and
$\eps\overline{P(q)}\Subset D_0(r)$.  No restriction on $\Om\in\R$ is needed
at this level; the dependence on $\Om$ is affine.

In \eqref{eq:036}, we place $\Om$ after $\eps$ to retain the
customary notation $F(\Om,r)$ for $V$-state equations.  In every
linearized operator below, however, the unknowns are ordered as $(r,q,\Om)$.
Thus $D_{(r,q,\Om)}\cF$ denotes the derivative in the last three variables
after this fixed permutation of factors.

\section{\texorpdfstring{A fixed nonresonant co-rotating template}
{A fixed nonresonant co-rotating template}}
\label{sec:02}

We begin by recasting Garc\'ia's polygonal branch in the normalization used below.

\begin{lemma}
\label{lem:03}
Fix $N\ge2$, $\ell>0$, and $\alpha\in(0,1)$.  There exist
$0<\rho_1\le\rho_0$ and $C^1$ maps
\[
 [0,\rho_0)\ni\rho\longmapsto h_\rho\in X_P,
 \qquad
 [0,\rho_0)\ni\rho\longmapsto\delta_\rho\in\R,
\]
with the following properties.

\begin{enumerate}[label=\textnormal{(\roman*)},leftmargin=2.4em]
\item One has $h_0=0$, $\delta_0=0$, and
\begin{equation}\label{eq:037}
 \|h_\rho\|_{X_P}=O(\rho).
\end{equation}
For $0<\rho<\rho_0$, let
\begin{equation}\label{eq:038}
 p_{\rho,h}(w)=\ell+\rho\bigl(w+\rho h(w)\bigr),
 \qquad
 P_\rho(h)=\bigcup_{j=1}^N\zeta^{j-1}P_{\rho,1}(h),
\end{equation}
where $p_{\rho,h}$ parametrizes $\partial P_{\rho,1}(h)$.  After decreasing
$\rho_0$, the set $P_\rho(h_\rho)$ is $D_N$-invariant and its components are
pairwise disjoint simply connected domains with $C^{2-\alpha}$ boundaries,
and the vorticity $\one_{P_\rho(h_\rho)}$ is a co-rotating configuration of
$N$ unit-vorticity vortex patches with angular velocity
\begin{equation}\label{eq:039}
 \delta_\rho
 =\frac{N-1}{4\ell^2}\rho^2+O(\rho^3),
\end{equation}
and
\begin{equation}\label{eq:040}
 \delta_\rho'
 =\frac{N-1}{2\ell^2}\rho+O(\rho^2).
\end{equation}

\item
The map
\begin{equation}\label{eq:041}
 \cG_\rho^{\rm tan}(\mu,h)
 :=\pa_\theta\left[
 \psi_{P_\rho(h)}(p_{\rho,h}(e^{\mathrm{i}\theta}))
 -\frac\mu2|p_{\rho,h}(e^{\mathrm{i}\theta})|^2
 \right],
\end{equation}
is Fr\'echet $C^1$ in a neighborhood of
$(\delta_\rho,h_\rho)$ and one has
\begin{equation}\label{eq:042}
 \cG_\rho^{\rm tan}(\delta_\rho,h_\rho)=0,
 \qquad
 D_{(\mu,h)}\cG_\rho^{\rm tan}(\delta_\rho,h_\rho):
 \R\times X_P\longrightarrow Y_P
\end{equation}
as a bounded linear isomorphism for every $\rho\in(0,\rho_1)$.
\end{enumerate}
\end{lemma}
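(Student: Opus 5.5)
I will deduce Lemma~\ref{lem:03} from Garc\'ia's desingularization theorem for the regular $N$-gon \cite{Gar21}, rescaled by amplitude homogeneity \eqref{eq:020}. In Garc\'ia's normalization each component has area $\pi\rho^2$ (to leading order) and vorticity $(\pi\rho^2)^{-1}$, so the limiting point-vortex polygon of unit circulations at $\ell\zeta^{j-1}$ rotates with angular velocity $\Om_{\rm pv}=\frac{N-1}{4\pi\ell^2}$. Garc\'ia obtains a $C^1$ branch $\rho\mapsto(\Om_\rho^{\rm G},h_\rho)$ of solutions of the conformal-chart equation with $h_0=0$ and $\Om_0^{\rm G}=\Om_{\rm pv}$.

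Lowering the amplitude to one multiplies the velocity field by $\pi\rho^2$ without changing the boundary. Hence $\one_{P_\rho(h_\rho)}$ co-rotates with $\delta_\rho=\pi\rho^2\Om_\rho^{\rm G}$. Since $\Om^{\rm G}_\rho=\Om_{\rm pv}+O(\rho)$ is $C^1$, this gives \eqref{eq:039}, and differentiating gives \eqref{eq:040}. The estimate \eqref{eq:037} follows because the desingularized equation has an $O(\rho)$ defect at $h=0$. The geometric claims in (i) come from the conformal-chart argument of Section~\ref{sec:01}: for small $\rho$, the curve $p_{\rho,h_\rho}(\T)$ is a small perturbation of the circle of radius $\rho$ about $\ell$, so the $N$ rotated copies are disjoint. $D_N$-invariance is built into the symmetric Laurent class $X_P$.

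For (ii), regularity of $\cG_\rho^{\rm tan}$ is standard. The boundary-integral form of $\psi_{P_\rho(h)}$ along $p_{\rho,h}$ is $C^1$ in $h\in X_P$, mapping into $C^{1-\alpha}$, by the usual Cauchy-integral estimates. Symmetry places the image in $Y_P$: reality of the coefficients makes the bracket even in $\theta$, so its derivative is a sine series. The zero $\cG^{\rm tan}_\rho(\delta_\rho,h_\rho)=0$ is exactly the rotating condition \eqref{eq:019}.

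The main obstacle is the isomorphism claim. To handle it, I will rescale: divide $\cG^{\rm tan}_\rho$ by $\rho^2$ and write $\mu=\pi\rho^2\nu$. In $w$ variables the self-induced part of the velocity gives, after the standard computation, the leading operator
\[
 h=\sum a_nw^{-n}\mapsto \sum_{n\ge1}c_n a_n\sin((n+1)\theta),\qquad c_n\asymp n,
\]
an isomorphism from $\{a_n\}$ onto sine modes $\ge2$. The $\sin\theta$ mode is not reached by $h$ at leading order. It is instead the translation/angular-velocity direction: differentiating $-\frac\mu2|\ell+\rho w|^2$ in $\mu$ gives $\partial_\mu\mapsto \ell\rho\sin\theta$ at leading order, which is nonzero. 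Hence the limit operator $(\nu,h)\mapsto$ (mode $1$ from $\nu$, modes $\ge2$ from $h$) is a bounded isomorphism $\R\times X_P\to Y_P$. This is the ``angular-velocity mode completion'' referred to in Section~\ref{sec:02}.

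The rescaled linearization at $(\delta_\rho,h_\rho)$ differs from this limit by $O(\rho)$ in operator norm, because the interaction terms from the other $N-1$ components are smooth and small after scaling. Therefore it is invertible for $\rho<\rho_1$, and undoing the diagonal scalings preserves invertibility for each fixed $\rho>0$. The delicate point is to check that Garc\'ia's reduced nondegeneracy is exactly this statement, with his Lagrange/ angular parameter playing the role of $\mu$, so that no additional kernel appears at mode $1$ from the interaction terms. If a discrepancy arises, I would compute the $\sin\theta$ coefficient directly using the moment expansions of Lemma~\ref{lem:02}.
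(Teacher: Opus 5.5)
Your route is essentially the paper's: normalize by $\delta_\rho=\pi\rho^2\widehat\delta_\rho$, pass to a rescaled full-target residual whose $(\widehat\delta,h)$-derivative at $\rho=0$ is $-\ell\lambda\sin\theta+\frac1{2\pi}\sum_{n\ge1}nk_n\sin((n+1)\theta)$ (angular velocity supplies mode one, $h$ supplies modes $\ge2$), and extend invertibility to small $\rho>0$ by operator-norm continuity. The only difference is how the $C^1$ extension of that full-target map to $\rho=0$ is obtained: the paper rebuilds it from Garc\'ia's reduced map through $\widehat{\mathscr F}_\rho(\widehat\delta,h)=\widetilde{\mathscr F}(\rho,h)+(\widehat\delta-\Lambda(\rho,h))\mathscr B(\rho,h)$ with $D_h\Lambda(0,\cdot)=0$, whereas your direct scaling argument also works, since the self-induced part is exactly scale-covariant and the interaction terms are smooth.

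One bookkeeping correction: with $\mu=\pi\rho^2\nu$ both blocks of $D\cG^{\rm tan}_\rho$ are of order $\rho^3$, as in the paper's conjugacy $\cG^{\rm tan}_\rho(\mu,h)=-\pi\rho^3\widehat{\mathscr F}_\rho(\mu/(\pi\rho^2),h)$. You must therefore divide by $\rho^3$ rather than $\rho^2$; otherwise both blocks of your limit operator tend to zero and there is nothing to perturb from.
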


\begin{proof}
\noindent\emph{External input and space identification.}
For $f\in Y_P$, define
\begin{equation}\label{eq:043}
 \pi_1f:=\frac1\pi\int_0^{2\pi}f(\theta)\sin\theta\,\mathrm{d}\theta,
 \qquad
 \Pi_{\ge2}f:=f-\pi_1(f)\sin\theta.
\end{equation}
Integration against $\sin\theta$ is a bounded functional on
$C^{1-\alpha}(\T)$, so
\[
 \pi_1\in\mathcal L(Y_P,\R),
 \qquad
 \Pi_{\ge2}\in\mathcal L(Y_P).
\]
The second operator is a bounded projection with closed range and kernel
$\operatorname{span}\{\sin\theta\}$.  Hence
\begin{equation}\label{eq:044}
 Y_P=\operatorname{span}\{\sin\theta\}
 \oplus\Pi_{\ge2}Y_P
\end{equation}
is a topological direct sum.  Set $\gamma=1-\alpha$.  Under the
identifications $X_{1+\gamma}=X_P$ and
$Y_\gamma=\Pi_{\ge2}Y_P$, Garc\'ia's construction provides the reduced
$C^1$ branch, a selected high-intensity angular velocity
$\widehat\delta_\rho$, and a regularized full residual
\cite[(9)--(10), (28), (34), (43), (48), Propositions~3.4 and~3.7,
and Theorem~3.8]{Gar21}.  The branch satisfies
\[
 h_0=0,
 \qquad
 \widehat\delta_0=\frac{N-1}{4\pi\ell^2}.
\]
The implicit-function theorem defining that branch gives
$\|h_\rho\|_{X_P}=O(\rho)$ and
$\widehat\delta_\rho=\widehat\delta_0+O(\rho)$.  This proves
\eqref{eq:037}.  For $0<\rho<\rho_0$, the corresponding
physical reconstruction is
\begin{equation}\label{eq:045}
 \widehat\omega_\rho
 =\frac1{\pi\rho^2}\one_{P_\rho(h_\rho)},
\end{equation}
which rotates with angular velocity $\widehat\delta_\rho$ and has the
geometry asserted in \textnormal{(i)}.
\medskip
\noindent\emph{Restoration of the first sine mode.}
To recover the full target $Y_P$, put
$A_\rho=(\pi\rho^2)^{-1}$,
$N_{\rho,h}(w)=w(1+\rho h'(w))$, and define
\begin{equation}\label{eq:046}
 \widehat{\mathscr F}_\rho(\widehat\delta,h)
 :=\left[A_\rho u_{P_\rho(h)}(p_{\rho,h})
       -\widehat\delta\,p_{\rho,h}^{\perp}\right]\cdot N_{\rho,h}.
\end{equation}
If
$\Phi_{\rho,h}(w)=\rho(w+\rho h(w))$, then
$p_{\rho,h}=\ell+\Phi_{\rho,h}$ and
$w\Phi_{\rho,h}'(w)=\rho N_{\rho,h}(w)$.  Identifying planar vectors with
complex numbers and using $a\cdot b=\Rea(a\overline b)$, one has the exact
normalization identity
\begin{align}
 &\left[A_\rho u_{P_\rho(h)}(p_{\rho,h})
       -\widehat\delta\,p_{\rho,h}^{\perp}\right]\cdot N_{\rho,h}\notag\\
 &\qquad\qquad=\Rea\!\left[
 \left\{\overline{A_\rho u_{P_\rho(h)}(p_{\rho,h})}
       +\mathrm{i}\widehat\delta\,\overline{p_{\rho,h}}\right\}
 N_{\rho,h}\right].
 \label{eq:047}
\end{align}
Garc\'ia's contour equation \cite[(28)]{Gar21} is the right-hand side
of \eqref{eq:047} multiplied by $\rho$, because its
tangent factor is $w\Phi_{\rho,h}'=\rho N_{\rho,h}$.  Division by this
nonzero scalar therefore gives exactly
\eqref{eq:046}, with no sign or conjugation change.

We next make the regularity of the first-mode selector explicit.  Let
$\mathcal J(\rho,h)$ denote Garc\'ia's regularized complex velocity
$J(\rho,h)$ in \cite[(33)]{Gar21}, after the identification
$(\epsilon,f)=(\rho,h)$, and set
\begin{align}
 \mathcal N(\rho,h)
 &:=\mathrm{i}\int_{\T}\mathcal J(\rho,h)(w)(w-\bar w)
       (1+\rho h'(w))\,\mathrm{d}w,
 \label{eq:048}\\
 \mathcal D(\rho,h)
 &:=\int_{\T}(1+\rho h'(w))(w-\bar w)
       \overline{p_{\rho,h}(w)}\,\mathrm{d}w\notag\\
 &=\int_{\T}(1+\rho h'(w))(w-\bar w)
       \bigl(\ell+\rho\bar w+\rho^2h(\bar w)\bigr)\,\mathrm{d}w.
 \label{eq:049}
\end{align}
Here the last equality uses the real Laurent coefficients of $h$.  In the
normalization verified in \eqref{eq:047}, Garc\'ia's
formula \cite[(43)]{Gar21} reads exactly
\begin{equation}\label{eq:050}
 \Lambda(\rho,h)=\frac{\mathcal N(\rho,h)}{\mathcal D(\rho,h)}.
\end{equation}
The divided-difference regularization in \cite[(33)]{Gar21}, with the
estimates used in the proof of \cite[Proposition~3.7]{Gar21}, shows that
$(\rho,h)\mapsto\mathcal J(\rho,h)$, and hence $\mathcal N$, is Fr\'echet
$C^1$ on a fixed neighborhood of $(0,0)$.  The denominator $\mathcal D$ is
Fr\'echet $C^1$ directly from \eqref{eq:049}; when $\rho=0$,
\begin{equation}\label{eq:051}
 \mathcal D(0,h)
 =\ell\int_{\T}(w-\bar w)\,\mathrm{d}w
 =-2\pi\mathrm{i}\ell\ne0.
\end{equation}
After shrinking that fixed neighborhood, $\mathcal D$ stays nonzero, and
\eqref{eq:050} proves $\Lambda\in C^1$.  Proposition~3.4 of \cite{Gar21} gives
\begin{equation}\label{eq:052}
 \pi_1\widehat{\mathscr F}_\rho(\Lambda(\rho,h),h)=0,
 \qquad
 \Lambda(0,h)=\widehat\delta_0.
\end{equation}
Since the map is constant in $h$, it follows that
$D_h\Lambda(0,h)=0$, in particular $D_h\Lambda(0,0)=0$.  Set
\[
 \widetilde{\mathscr F}(\rho,h)
 :=\Pi_{\ge2}\widehat{\mathscr F}_\rho(\Lambda(\rho,h),h).
\]
The residual already belongs to $Y_P$; hence elimination of its first sine
coefficient implies
$\widehat{\mathscr F}_\rho(\Lambda(\rho,h),h)
=\widetilde{\mathscr F}(\rho,h)$.  Since the angular velocity enters
affinely, the exact full--reduced identity
\begin{equation}\label{eq:053}
 \widehat{\mathscr F}_\rho(\widehat\delta,h)
 =\widetilde{\mathscr F}(\rho,h)
  +(\widehat\delta-\Lambda(\rho,h))\mathscr B(\rho,h),
 \qquad
 \mathscr B=D_{\widehat\delta}\widehat{\mathscr F}_\rho,
\end{equation}
holds in the full space $Y_P$.  The cited reduced map
$\widetilde{\mathscr F}$ and the selector $\Lambda$ have $C^1$ extensions
to $\rho=0$.  Moreover,
\[
 \mathscr B(\rho,h)
 =-p_{\rho,h}^{\perp}\cdot N_{\rho,h}
\]
is explicit and $C^\infty$ in $(\rho,h)$, including at $\rho=0$.
Consequently the right-hand side of \eqref{eq:053} is a
$C^1$ map on an open neighborhood of
$(\rho,\widehat{\delta},h)=(0,\widehat\delta_0,0)$ in $\R\times\R\times X_P$, now with values in the
full target $Y_P$.

More precisely, there is $r_0>0$ such that
\begin{equation}\label{eq:054}
 \Lambda(0,h)=\widehat\delta_0,
 \qquad D_h\Lambda(0,h)=0
 \quad\text{for }\|h\|_{X_P}<r_0,
 \qquad
 \mathscr B(0,h)=-\ell\sin\theta.
\end{equation}
The last identity follows directly from
$\mathscr B(\rho,h)=-p_{\rho,h}^\perp\cdot N_{\rho,h}$.  The
derivative in the cited construction is
therefore
\begin{equation}\label{eq:055}
\begin{aligned}
 D_{(\widehat\delta,h)}
 \widehat{\mathscr F}_0(\widehat\delta_0,0)[\lambda,k]
 &=-\ell\lambda\sin\theta
   -\frac1{2\pi}\Rea\!\left(
      \mathrm{i}k'(e^{\mathrm{i}\theta})\right)\\
 &=-\ell\lambda\sin\theta
   +\frac1{2\pi}\sum_{n\ge1}nk_n\sin((n+1)\theta),
\end{aligned}
\end{equation}
where $k(w)=\sum_{n\ge1}k_nw^{-n}$.  Its first-mode coefficient in the
angular-velocity direction is $-\ell\ne0$.  More explicitly, for
\[
 y(\theta)=\sum_{m\ge1}b_m\sin(m\theta)\in Y_P,
\]
the unique inverse image under \eqref{eq:055} is
\begin{equation}\label{eq:056}
 \lambda=-\frac{b_1}{\ell},
 \qquad
 k(w)=\sum_{n\ge1}\frac{2\pi b_{n+1}}{n}w^{-n}.
\end{equation}
The periodic H\"older multiplier estimate, together with the
one-mode index shift, gives
\[
 |\lambda|+\|k\|_{X_P}\le C\|y\|_{Y_P}.
\]
Thus \eqref{eq:055} is a bounded linear isomorphism
from $\R\times X_P$ onto $Y_P$.  The $C^1$ extension and the convergence
$(\rho,\widehat\delta_\rho,h_\rho)\to(0,\widehat\delta_0,0)$ imply convergence
of the corresponding derivatives in operator norm.  Openness of the set of
isomorphisms then gives, after decreasing $\rho_1$,
\begin{equation}\label{eq:057}
 D_{(\widehat\delta,h)}
 \widehat{\mathscr F}_\rho(\widehat\delta_\rho,h_\rho):
 \R\times X_P\longrightarrow Y_P
\end{equation}
as a bounded linear isomorphism for $0<\rho<\rho_1$.

\medskip
\noindent\emph{Unit-vorticity normalization.}
Finally, the amplitude-scaling law \eqref{eq:020} gives
$\delta_\rho=\pi\rho^2\widehat\delta_\rho$.  Since the branch is $C^1$,
$\widehat\delta_\rho'=O(1)$, and hence
\[
 \delta_\rho'
 =2\pi\rho\widehat\delta_\rho
  +\pi\rho^2\widehat\delta_\rho',
\]
which proves \eqref{eq:039}--\eqref{eq:040}.  Moreover,
$\pa_\theta p_{\rho,h}=\mathrm{i}\rho N_{\rho,h}$, so the streamline
criterion and \eqref{eq:046} give the exact conjugacy
\begin{equation}\label{eq:058}
 \cG_\rho^{\rm tan}(\mu,h)
 =-\pi\rho^3\widehat{\mathscr F}_\rho
   \left(\frac\mu{\pi\rho^2},h\right).
\end{equation}
It follows first that
$\cG_\rho^{\rm tan}(\delta_\rho,h_\rho)=0$ and, after differentiation, that
\begin{equation}\label{eq:059}
 D_{(\mu,h)}\cG_\rho^{\rm tan}(\delta_\rho,h_\rho)
 =-\pi\rho^3
 D_{(\widehat\delta,h)}\widehat{\mathscr F}_\rho
  (\widehat\delta_\rho,h_\rho)
 \begin{pmatrix}(\pi\rho^2)^{-1}&0\\0&I\end{pmatrix}.
\end{equation}
For every fixed $\rho>0$, both outer factors are invertible.  Part
\textnormal{(ii)} follows from \eqref{eq:057}. The proof is thus complete.
\end{proof}

We next select from this branch a fixed nonresonant template for the
gluing construction.
\begin{proposition}\label{prop:02}
For every integer $N\ge2$, every $\ell>0$, and every $\alpha\in(0,1)$, there
exist $\rho_2>0$, a discrete set
$\mathcal E_N\subset(0,\rho_2)$ whose only possible accumulation point in
$[0,\rho_2)$ is $0$, and a family
$(P_\rho,\delta_\rho,p_\rho)_{0<\rho<\rho_2}$ with the following properties.

\begin{enumerate}[label=\textnormal{(\roman*)},leftmargin=2.4em]
\item The fundamental chart is
\[
 p_\rho(w)=\ell+\rho\bigl(w+\rho h_\rho(w)\bigr),
\]
where $\rho\mapsto h_\rho$ is $C^1$ from $[0,\rho_2)$ into $X_P$ and
$h_0=0$.  Each $P_\rho$ is a $D_N$-invariant union of $N$ pairwise disjoint
simply connected $C^{2-\alpha}$ domains, and the vorticity $\one_{P_\rho}$
is a co-rotating configuration of $N$ unit-vorticity vortex patches with
angular velocity $\delta_\rho\in(0,1/(2N))$.

\item For every
$\rho_*\in(0,\rho_2)\setminus\mathcal E_N$, set
\[
 p_0:=p_{\rho_*},\qquad P:=P_{\rho_*},\qquad
 \delta:=\delta_{\rho_*}.
\]
Then
\begin{equation}\label{eq:060}
 \delta\ne\frac1{2kN}
 \qquad\text{for every }k\ge1.
\end{equation}

\item For the chart $p_q=p_{\rho_*}+\rho_*^2q$ and its associated
configuration $P(q)$, there exist $\eta_\mu>0$ and an open neighborhood
$\mathcal V_P\subset\mathcal U_P$ of $0$ such that
\begin{equation}\label{eq:061}
\begin{aligned}
 \cG:
 &(\delta-\eta_\mu,\delta+\eta_\mu)\times\mathcal V_P
 \longrightarrow Y_P,\\
 \cG(\mu,q)(\theta)
 &:={\pa_\theta}\left[
  \psi_{P(q)}(p_q(e^{\mathrm{i}\theta}))
  -\frac{\mu}{2}|p_q(e^{\mathrm{i}\theta})|^2
  \right]
\end{aligned}
\end{equation}
is Fr\'echet $C^1$, satisfies
\begin{equation}\label{eq:062}
 \cG(\delta,0)=0,
\end{equation}
and the operator
\begin{equation}\label{eq:063}
\begin{aligned}
 \mathscr A_P:X_P\times\R&\longrightarrow Y_P,\\
 \mathscr A_P[h,\lambda]
 &:=-D_q\cG(\delta,0)h+D_\mu\cG(\delta,0)\lambda
\end{aligned}
\end{equation}
is a bounded linear isomorphism.

\item As $\rho\downarrow0$,
\begin{equation}\label{eq:064}
 \delta_\rho
 =\frac{N-1}{4\ell^2}\rho^2+O(\rho^3),
 \qquad
 M_N(P_\rho)=N\pi\ell^N\rho^2+O(\rho^4).
\end{equation}
Moreover,
\[
 M_N(P_\rho)>0,
 \qquad 0<\rho<\rho_2.
\]
\end{enumerate}
After one admissible value $\rho_*$ has been selected, the objects
$\rho_*$, $P$, $\delta$, and $p_0$ are kept fixed; the embedding parameter
$\eps$ introduced later is independent of this choice.
\end{proposition}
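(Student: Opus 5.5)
We build everything from the branch of Lemma~\ref{lem:03}: set $\rho_2\le\rho_1$, $h_\rho$ and $\delta_\rho$ as there, and $p_\rho=p_{\rho,h_\rho}$, $P_\rho=P_\rho(h_\rho)$. Part (i) is then inherited from Lemma~\ref{lem:03}(i), except the range condition $\delta_\rho\in(0,1/(2N))$. By \eqref{eq:039}, $\delta_\rho=\frac{N-1}{4\ell^2}\rho^2+O(\rho^3)$, so $\delta_\rho>0$ and $\delta_\rho<1/(2N)$ once $\rho_2$ is small.

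For (ii), we use \eqref{eq:040}: $\delta_\rho'>0$ on $(0,\rho_2)$ after shrinking $\rho_2$. Hence $\rho\mapsto\delta_\rho$ is a strictly increasing $C^1$ map onto some interval $(0,\delta_{\max})$. Each value $1/(2kN)$ is attained at most once. Since $1/(2kN)\ge 1/(2N)>\delta_\rho$ for $k=1$ and all relevant $k$ with $\delta_{\max}<1/(2N)$, in fact no resonance occurs and $\mathcal E_N=\emptyset$ would suffice. For robustness we define
\[
\mathcal E_N=\{\rho\in(0,\rho_2):\delta_\rho=1/(2kN)\text{ for some }k\ge1\},
\]
which has at most one point per $k$. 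These points decrease to $0$ as $k\to\infty$ by monotonicity and continuity at $0$, so they accumulate only at $0$. Then \eqref{eq:060} holds off $\mathcal E_N$.

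For (iii), we relate the chart $p_q=p_0+\rho_*^2q$ to Lemma~\ref{lem:03}'s chart. We have $p_q=p_{\rho_*,h_{\rho_*}+\rho_* q}$, since $\rho_*\cdot\rho_*(h_{\rho_*}+\rho_*q)$ contributes the extra $\rho_*^2 q$. Thus $\cG(\mu,q)=\cG_{\rho_*}^{\rm tan}(\mu,h_{\rho_*}+\rho_*q)$, which is an affine reparametrization of the $h$-variable. $C^1$ regularity and $\cG(\delta,0)=0$ follow from Lemma~\ref{lem:03}(ii) for a small neighborhood $\mathcal V_P$ and small $\eta_\mu$. The differential is
\[
D_{(\mu,q)}\cG(\delta,0)[\lambda,h]=D_{(\mu,h)}\cG^{\rm tan}_{\rho_*}(\delta,h_{\rho_*})[\lambda,\rho_*h].
\]
Hence $\mathscr A_P[h,\lambda]=D\cG^{\rm tan}_{\rho_*}[\lambda,-\rho_*h]$, which is the composition of the isomorphism from \eqref{eq:042} with the invertible map $(h,\lambda)\mapsto(\lambda,-\rho_*h)$. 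Therefore $\mathscr A_P$ is an isomorphism.

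For (iv), the $\delta$ expansion is \eqref{eq:039}. For the moment, we use \eqref{eq:031}, or directly the mean-value property. Write $P_{\rho,1}=\ell+\Phi(\cdot)$-image. If $h=0$ the component is the disk $B(\ell,\rho)$, and $\int_{B(\ell,\rho)}z^N\,dA=\pi\rho^2\ell^N$ because $z^N$ is harmonic. Summing the $N$ rotated copies gives $\sum_j(\zeta^{j-1})^N\pi\rho^2\ell^N=N\pi\ell^N\rho^2$. The correction from $\rho^2h_\rho$ changes the boundary by $O(\rho^2\|h_\rho\|)=O(\rho^3)$, since $\|h_\rho\|=O(\rho)$. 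A naive estimate then gives an area-weighted change of order $\rho\cdot\rho^3=\rho^4$, because the boundary length is $O(\rho)$ and the normal displacement is $O(\rho^3)$. More carefully, we plan to differentiate \eqref{eq:031} in the perturbation $\rho^2 h$. The first-order variation is $\int_{\T}z^N\,\Rea(\bar{\dot p}\,\cdot)$ along a circle of radius $\rho$ with displacement $\rho^2h=O(\rho^3)$, and the boundary length factor $\rho$ gives $O(\rho^4)$. The quadratic term is smaller. This yields $O(\rho^4)$ and hence positivity of $M_N$ for small $\rho_2$.

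The main subtlety is this moment bound. We expect to need the precise bookkeeping above, rather than the crude bound from \eqref{eq:028}, to get $O(\rho^4)$ instead of $O(\rho^3)$.
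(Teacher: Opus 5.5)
Your argument is correct in substance. For (i)--(iii) it follows the paper's route: inherit the branch from Lemma~\ref{lem:03}, use $\delta_\rho'>0$ to make the resonant set discrete, and identify $\cG$ with $\cG^{\rm tan}_{\rho_*}$ by an affine change of the shape variable. Three points need correcting, though none affects the conclusions.

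\begin{enumerate}
\item In (ii) the inequality goes the other way: $1/(2kN)\le 1/(2N)$. Since $\delta_\rho$ increases continuously from $\delta_0=0$, every value $1/(2kN)$ with $k$ large is attained, so $\mathcal E_N$ is infinite. Your remark that $\mathcal E_N=\emptyset$ would suffice is false; your subsequent definition of $\mathcal E_N$ is what does the work.
\item In (iii), since $p_{\rho,h}=\ell+\rho w+\rho^2h$, one has $p_{\rho_*}+\rho_*^2q=p_{\rho_*,h_{\rho_*}+q}$. Your version $p_{\rho_*,h_{\rho_*}+\rho_*q}$ adds $\rho_*^3q$ instead. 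Hence $\cG(\mu,q)=\cG^{\rm tan}_{\rho_*}(\mu,h_{\rho_*}+q)$, and the relevant invertible map is $(h,\lambda)\mapsto(\lambda,-h)$, as in \eqref{eq:069}. Invertibility holds either way.
\item In (iv) you should record that $M_N(P_\rho)\in\R$, which follows from $\overline{P_\rho}=P_\rho$, before asserting positivity.
\end{enumerate}

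For the moment asymptotics you take a different and more elementary route. The paper rescales to the unit-size domain $Q_\rho$ bounded by $w+\rho h_\rho$. It computes $|Q_\rho|$ and $\int_{Q_\rho}z\,\mathrm{d}A$ through Laurent coefficients in the Cauchy--Green formula, and then expands $(\ell+\rho z)^N$ binomially. You instead use the exact identity $\int_{B(\ell,\rho)}z^N\,\mathrm{d}A=\pi\rho^2\ell^N$ (mean-value property), which absorbs the whole binomial expansion at once, plus a perturbation bound.

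Your ``naive'' estimate is already rigorous and is the cleanest version. Let $a=\|h_\rho\|_{L^\infty}=O(\rho)$. A winding-number comparison of $\rho w+\rho^2h_\rho(w)-(z-\ell)$ with $\rho w-(z-\ell)$ shows that $P_{\rho,1}(h_\rho)\triangle B(\ell,\rho)$ lies in the annulus $\rho-\rho^2a\le|z-\ell|\le\rho+\rho^2a$. That annulus has area $4\pi\rho^3a=O(\rho^4)$, and $|z^N|$ is bounded there, so the correction is $O(\rho^4)$.

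Your ``more careful'' plan of differentiating \eqref{eq:031} would need an extra step. The term $p^N\bar p\,\eta'$ with $\eta=\rho^2h_\rho$ is only $O(\rho^3)$ pointwise. The additional factor $\rho$ appears only after integrating by parts on $\T$, since $p^N\bar p$ has $w$-derivative $O(\rho)$ there.

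The paper's computation yields more than is needed ($|Q_\rho|-\pi$ and the first moment of $Q_\rho$ are each $O(\rho^4)$). Your route delivers exactly the required $O(\rho^4)$ with less bookkeeping.
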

\begin{proof}
Let the branch
\[
 \rho\longmapsto
 \bigl(P_\rho(h_\rho),\widehat\delta_\rho,\delta_\rho\bigr),
 \qquad 0\le\rho<\rho_0,
\]
be supplied by Lemma~\ref{lem:03}.  We write
\(P_\rho:=P_\rho(h_\rho)\).

\medskip
\noindent\emph{Step 1. The first nonradial moment.}
Let \(Q_\rho\) be the domain bounded by
\[
 q_\rho(w):=w+\rho h_\rho(w).
\]
On \(\T\), write
\[
 \widetilde h_\rho(w):=\overline{h_\rho(w)}
 =\sum_{n\ge1}a_n(\rho)w^n.
\]
Applying \eqref{eq:025} with
\(\Phi(z)=\overline z\) gives
\[
 |Q_\rho|
 =\pi\left(1-\rho^2\sum_{n\ge1}n|a_n(\rho)|^2\right)
 =\pi+O(\rho^4),
\]
because \(\|h_\rho\|_{X_P}=O(\rho)\) and
\(X_P\hookrightarrow H^{1/2}(\T)\).  Applying the same formula with
\(\Phi(z)=z\overline z\), we obtain
\[
 \int_{Q_\rho}z\,\mathrm{d}A(z)
 =\frac1{2\mathrm{i}}\int_{\T}
 q_\rho(w)\overline{q_\rho(w)}q_\rho'(w)\,\mathrm{d}w.
\]
The integrand expands as
\begin{align*}
q_\rho\overline{q_\rho}q_\rho'
={}&1+\rho\bigl(w\widetilde h_\rho
       +w^{-1}h_\rho+h_\rho'\bigr)\\
&+\rho^2\bigl[h_\rho\widetilde h_\rho
 +(w\widetilde h_\rho+w^{-1}h_\rho)h_\rho'\bigr]
 +\rho^3h_\rho\widetilde h_\rho h_\rho'.
\end{align*}
The coefficient of \(w^{-1}\) in the term linear in \(\rho\) vanishes:
\(w\widetilde h_\rho\) contains only positive powers, whereas
\(w^{-1}h_\rho\) and \(h_\rho'\) contain only powers \(w^{-n-1}\),
\(n\ge1\).  Since
\[
 \frac1{2\mathrm{i}}\int_\T F(w)\,\mathrm{d}w
 =\pi\times \text{ the coefficient of }F \text{ at }w^{-1},
\]
only this Laurent coefficient contributes.  It follows that
\[
 \left|\int_{Q_\rho}z\,\mathrm{d}A(z)\right|
 \le C\rho^2\left(\|h_\rho\|_{C^1}^2
       +\rho\|h_\rho\|_{C^1}^3\right)
 =O(\rho^4).
\]
The domains \(Q_\rho\) remain uniformly bounded; hence
\begin{equation}\label{eq:065}
\begin{aligned}
 |Q_\rho|=\pi+O(\rho^4),\qquad
 \int_{Q_\rho}z\,\mathrm{d}A(z)=O(\rho^4),\\
 \int_{Q_\rho}z^k\,\mathrm{d}A(z)=O_k(1)
 \qquad\qquad \text{for each fixed }k\ge1.
\end{aligned}
\end{equation}
Since \((\zeta^jz)^N=z^N\), a change of variables on the fundamental
component followed by the binomial formula yields
\begin{align}
 M_N(P_\rho)
 &=N\rho^2\int_{Q_\rho}(\ell+\rho z)^N\,\mathrm{d}A(z)\notag\\
 &=N\rho^2\sum_{k=0}^N\binom Nk\ell^{N-k}\rho^k
   \int_{Q_\rho}z^k\,\mathrm{d}A(z)\notag\\
 &=N\pi\ell^N\rho^2+O(\rho^4).
 \label{eq:seed-MN}
\end{align}
Indeed, the terms \(k=0\), \(k=1\), and \(k\ge2\) contribute, respectively,
\(N\pi\ell^N\rho^2+O(\rho^6)\), \(O(\rho^7)\), and \(O(\rho^4)\).
Reflection symmetry makes \(M_N(P_\rho)\) real.  Since \(\ell>0\), there is
\(\rho_M>0\) such that
\[
 M_N(P_\rho)>0\qquad\text{for }0<\rho<\rho_M.
\]

\medskip
\noindent\emph{Step 2. Selection of a nonresonant microscopic scale.}
By \eqref{eq:039}--\eqref{eq:040}, after decreasing
\(\rho_2>0\) so that
\(\rho_2\le\min\{\rho_1,\rho_M\}\), we have
\begin{equation}\label{eq:066}
 0<\delta_\rho<\frac1{2N},
 \qquad \delta_\rho'>0,
 \qquad M_N(P_\rho)>0
 \qquad (0<\rho<\rho_2).
\end{equation}
Define
\[
 \mathcal E_N
 :=\left\{\rho\in(0,\rho_2):
 \delta_\rho=\frac1{2kN}\text{ for some }k\ge1\right\}.
\]
Strict monotonicity shows that each resonant value has at most one preimage.
If distinct points of \(\mathcal E_N\) accumulated at
\(\bar\rho\in(0,\rho_2)\), the corresponding numbers \(1/(2kN)\) would
accumulate at \(\delta_{\bar\rho}>0\), which is impossible.  Thus
\(\mathcal E_N\) is discrete in \((0,\rho_2)\), and its only possible
accumulation point in \([0,\rho_2)\) is \(0\).  Since it is countable, its
complement meets every interval \((0,\eta)\), \(\eta>0\).  Choose
\begin{equation}\label{eq:067}
 \rho_*\in(0,\rho_2)\setminus\mathcal E_N.
\end{equation}
Then \eqref{eq:060} holds.

\medskip
\noindent\emph{Step 3. The fixed normalized chart.}
Set
\begin{equation}\label{eq:068}
 p_0(w)=\ell+\rho_*
 \bigl(w+\rho_*h_{\rho_*}(w)\bigr),
 \qquad
 P=P_{\rho_*},
 \qquad
 \delta=\delta_{\rho_*}.
\end{equation}
Lemma~\ref{lem:03}\textnormal{(i)} gives the asserted geometry,
regularity, and rotation law.  In the fixed chart \eqref{eq:023},
\[
 p_q(w)=\ell+\rho_*
 \bigl(w+\rho_*(h_{\rho_*}(w)+q(w))\bigr),
\]
and therefore
\begin{equation}\label{eq:069}
 \cG(\mu,q)
 =\cG_{\rho_*}^{\rm tan}(\mu,h_{\rho_*}+q),
 \qquad |\mu-\delta|<\eta_\mu,\quad q\in\mathcal V_P.
\end{equation}
The root and augmented isomorphism in
Lemma~\ref{lem:03}\textnormal{(ii)} imply
\begin{equation}\label{eq:070}
 \cG(\delta,0)=0
\end{equation}
and show directly that
\[
 \mathscr A_P[h,\lambda]
 =D_{(\mu,q)}\cG(\delta,0)[\lambda,-h].
\]
The derivative on the right and the sign-changing permutation
$(h,\lambda)\mapsto(\lambda,-h)$ are bounded linear isomorphisms, which
proves \eqref{eq:063}.  The asymptotics
\eqref{eq:039} and \eqref{eq:seed-MN} give
\eqref{eq:064}.  This completes the proof.

\end{proof}

\section{\texorpdfstring{The exact complement identity and the two scales}
{The exact complement identity and the two scales}}
\label{sec:03}

Choose one nonresonant seed supplied by Proposition~\ref{prop:02} and
keep $\rho_*$, $P$, $p_0$, and $\delta$ fixed from now on.  All constants
below, including the norms of $L_\delta^{-1}$ and $\mathscr A_P^{-1}$, may
depend on this choice; the only parameter tending to zero is $\eps$.  Put
\begin{equation}\label{eq:071}
 \Om_*:=\frac12-\delta.
\end{equation}
The fixed set $P$ is the union of the component domains in a co-rotating
configuration of $N$ unit-vorticity vortex patches.  The holes will be
obtained by removing the small spatial copy $\eps P$ from the outer vortex
patch domain.

Inside the unit disk,
\begin{equation}\label{eq:072}
 \psi_\D(x)=C_\D+\frac14|x|^2,
 \qquad
 u_\D(x)=\frac12x^\perp.
\end{equation}
Since the vorticity $\one_P$ is a co-rotating configuration of
unit-vorticity vortex patches with angular velocity $\delta$,
\[
 (u_P(y)-\delta y^\perp)\cdot n_P(y)=0,
 \qquad y\in\pa P.
\]
Together with $u_{\eps P}(\eps y)=\eps u_P(y)$, this gives the exact
complement identity
\begin{equation}\label{eq:073}
 u_{\D\setminus\overline{\eps P}}(\eps y)
 -\Om_*(\eps y)^\perp
 =-\eps\bigl(u_P(y)-\delta y^\perp\bigr).
\end{equation}
The right-hand side is tangent to every component of $\pa P$.
Equivalently, for $y$ in any fixed bounded set and for all sufficiently
small $\eps$,
\begin{equation}\label{eq:074}
 \Psi_{\D\setminus\overline{\eps P},\Om_*}(\eps y)
 =C_\eps-\eps^2
 \left(\psi_P(y)-\frac\delta2|y|^2\right).
\end{equation}
Thus the unperturbed complement has no inner residual.  Its only defect is on
the outer circle.

The outer and inner corrections are governed by two a priori distinct
weights.  These weights coincide when $N=2$ and differ when $N\ge3$.  We set
\begin{equation}\label{eq:075}
 t_{\rm o}:=\eps^{N+2},
 \qquad
 t_{\rm i}:=\eps^{2N}.
\end{equation}
For $g\in X_N$, $h\in X_P$, and $\lambda\in\R$, define
\begin{align}
 r&=t_{\rm o}g,
 &q&=t_{\rm i}h,
 &\delta_{\eps,\lambda}&=\delta-t_{\rm i}\lambda,
 &\Om_{\eps,\lambda}&=\Om_*+t_{\rm i}\lambda,
 \label{eq:076}
\end{align}
so that $\Om_{\eps,\lambda}=1/2-\delta_{\eps,\lambda}$.  The corresponding
physical domain is
\begin{equation}\label{eq:077}
 D_{\eps,g,h}
 =D_0(t_{\rm o}g)\setminus\overline{\eps P(t_{\rm i}h)}.
\end{equation}

\begingroup
The first nonradial exterior moment of $P$ has order $N$, so its trace at
unit distance is $\eps^{N+2}$ and forces an outer displacement of size
$t_{\rm o}$, see \eqref{eq:078}.  The potential of that displacement is harmonic near the origin
and $D_N$-invariant.  Its first nonconstant term has degree $N$, so evaluation
at $x=\eps y$ adds the factor $\eps^N$ and produces a physical potential of
size $\eps^{2N+2}$.  The normalized inner potential in \eqref{eq:Qeps}
contains the factor $\eps^{-2}$; the feedback seen by the inner equation is
therefore
\[
 \frac{t_{\rm o}}{\eps^2}\,\eps^N
 =\eps^{2N}=t_{\rm i}.
\]
This is also the scale of an inner shape or angular-velocity perturbation.
For $N=2$, the degree-$N$ jet is the Hessian of the outer potential and gives
the usual strain block.  For $N\ge3$, rotational symmetry makes the Hessian
scalar and harmonicity makes its trace zero, so the lower-degree coupling
vanishes and the first feedback occurs at degree $N$.
\endgroup

\section{The renormalized boundary equations}
\label{sec:04}

\subsection{\texorpdfstring{Exterior multipole expansion}
{Exterior multipole expansion}}
\label{sub:01}

We begin with the exterior multipole expansion.  By
Lemma~\ref{lem:02}, $\mathscr A(q)=|P(q)|$ and all moments
used below depend Fr\'echet $C^1$ on the inner shape.

\begingroup
\begin{lemma}\label{lem:04}
Let $0<r_-<r_+<\infty$ and set
\[
 \mathcal A:=\{z\in\C:r_-\le|z|\le r_+\}.
\]
There exist $r_P>0$, $R_{\rm seed}>0$, and
$\eps_{\rm mp}\in(0,\eps_1]$ such that
\[
\begin{gathered}
 \overline B_{X_P}(0,2r_P)\subset\mathcal U_P,
 \qquad
 P(q)\Subset B_{R_{\rm seed}}\quad
 \text{for }\|q\|_{X_P}<2r_P,\\
 \frac{\eps_{\rm mp}R_{\rm seed}}{r_-}<\frac12.
\end{gathered}
\]
Moreover, for $0<\eps\le\eps_{\rm mp}$,
$q\in B_{X_P}(0,2r_P)$, and $z\in\mathcal A$,
\begin{align}
 \psi_{\eps P(q)}(z)
 ={}&\frac{\eps^2\mathscr A(q)}{2\pi}\log|z|
 -\frac{\eps^{N+2}}{2\pi N}
   \Rea\!\left(M_N(q)z^{-N}\right)\notag\\
 &+\eps^{2N+2}\mathscr R_{\rm mp}(\eps,z,q).
 \label{eq:078}
\end{align}
For every integer $s\ge0$, the remainder and its shape derivative extend so
that
\[
 \mathscr R_{\rm mp}\in
 C\bigl([0,\eps_{\rm mp}]\times B_{X_P}(0,2r_P);
 C^s(\mathcal A)\bigr)
\]
and
\[
 D_q\mathscr R_{\rm mp}\in
 C\bigl([0,\eps_{\rm mp}]\times B_{X_P}(0,2r_P);
 \mathcal L(X_P,C^s(\mathcal A))\bigr).
\]
Moreover, there is $C_s>0$ such that
\[
 \sup_{\substack{0\le\eps\le\eps_{\rm mp}\\
 \|q\|_{X_P}\le r_P}}
 \left(
  \|\mathscr R_{\rm mp}(\eps,\cdot,q)\|_{C^s(\mathcal A)}
  +\|D_q\mathscr R_{\rm mp}(\eps,\cdot,q)\|_{
     \mathcal L(X_P,C^s(\mathcal A))}
 \right)\le C_s.
\]
\end{lemma}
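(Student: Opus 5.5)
We will prove \eqref{eq:078} by the classical multipole expansion of the logarithmic kernel. The first step fixes the constants. Lemma~\ref{lem:02} and the embeddedness of the charts on a small shape ball give $r_P>0$ with $\overline B_{X_P}(0,2r_P)\subset\mathcal U_P$ and a uniform $R_{\rm seed}$ with $P(q)\Subset B_{R_{\rm seed}}$; the radius $R_P$ of Lemma~\ref{lem:02} may be taken as $R_{\rm seed}$. We then choose $\eps_{\rm mp}$ so that $\eps_{\rm mp}R_{\rm seed}/r_-<1/2$.

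For $z\in\mathcal A$ and $y\in\eps P(q)$ we have $|y|/|z|<1/2$. Hence
\[
 \log|z-y|=\log|z|+\Rea\log(1-y/z)=\log|z|-\sum_{n\ge1}\frac1n\Rea\bigl(y^nz^{-n}\bigr),
\]
and the series converges uniformly. Integrating over $\eps P(q)$ and substituting $y=\eps\xi$ gives
\[
 \psi_{\eps P(q)}(z)=\frac{\eps^2\mathscr A(q)}{2\pi}\log|z|-\frac1{2\pi}\sum_{n\ge1}\frac{\eps^{n+2}}{n}\Rea\bigl(M_n(q)z^{-n}\bigr).
\]
By \eqref{eq:027}, only the indices $n=kN$ survive. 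The term $k=1$ is the displayed $N$-th multipole. All remaining terms have $n\ge2N$, so we define
\[
 \mathscr R_{\rm mp}(\eps,z,q):=-\frac1{2\pi}\sum_{k\ge2}\frac{\eps^{kN-2N}}{kN}\Rea\bigl(M_{kN}(q)z^{-kN}\bigr).
\]
This expression makes sense for every $\eps\in[0,\eps_{\rm mp}]$, including $\eps=0$; there it reduces to the $k=2$ term.

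The second step is the estimate, which uses the bounds \eqref{eq:028}. The $C^s(\mathcal A)$ norm of $z^{-n}$ is at most $C_s n^s r_-^{-n}$. The $k$-th term is therefore bounded by
\[
 C\,n^{s}\bigl(\eps_{\rm mp}R_{\rm seed}/r_-\bigr)^{n-2N}\bigl(R_{\rm seed}/r_-\bigr)^{2N},\qquad n=kN.
\]
Since $\eps_{\rm mp}R_{\rm seed}/r_-<1/2$, this is summable in $k$, uniformly in $\eps$ and $q$. The derivative bound $\|D_qM_n\|\le C(1+n)R_P^n$ gives the same conclusion for the termwise differentiated series in $\mathcal L(X_P,C^s(\mathcal A))$. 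Each term is continuous in $\eps$ and $C^1$ in $q$ by Lemma~\ref{lem:02}. The Weierstrass M-test then yields both continuity statements, the $C^1$ dependence in $q$ with $D_q$ commuting with the sum, and the uniform bound $C_s$.

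The main obstacle is making the moment bounds of Lemma~\ref{lem:02} hold with a single radius $R_P=R_{\rm seed}$, uniformly in $n$, together with the factor $(1+n)$ in the derivative bound. These bounds come from \eqref{eq:031}--\eqref{eq:032}, using $|p_j|\le R_{\rm seed}$ on $\T$ over the whole ball $B_{X_P}(0,2r_P)$. The ball therefore has to be small enough that the charts stay inside $B_{R_{\rm seed}}$. It also has to lie in the domain where Lemma~\ref{lem:02} applies, which means shrinking $r_P$ once at the start. After that, only the geometric factor $(\eps R_{\rm seed}/r_-)^{n}$ has to beat polynomial growth in $n$, and that is routine.
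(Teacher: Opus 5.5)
Your proposal is correct and follows the paper's proof. Both arguments use the same ingredients: the logarithmic multipole expansion for $|\eps y|/|z|<1/2$, the dihedral vanishing \eqref{eq:027} of the moments with $N\nmid n$, and the same explicit remainder series over $n=kN$, $k\ge2$. Both then obtain uniform summability from the moment bounds \eqref{eq:028} and $\|z^{-n}\|_{C^s(\mathcal A)}\lesssim n^s r_-^{-n}$, closed by the Weierstrass test and termwise differentiation. The only cosmetic difference is that you take $R_{\rm seed}=R_P$, whereas the paper allows $R_{\rm seed}\ge R_P$; this changes nothing.
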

\endgroup

\begingroup
\begin{proof}
For $|\eps y|<|z|$,
\begin{equation*}
 \log|z-\eps y|
 =\log|z|-\sum_{n\ge1}\frac{\eps^n}{n}
 \Rea(y^nz^{-n}).
\end{equation*}
After integration and use of \eqref{eq:027}, only the
indices $n=jN$ remain.  Extracting $j=1$ gives \eqref{eq:078}; more
explicitly,
\[
 \mathscr R_{\rm mp}(\eps,z,q)
 =-\frac1{2\pi}\sum_{j\ge2}\frac{\eps^{(j-2)N}}{jN}
   \Rea\!\left(M_{jN}(q)z^{-jN}\right).
\]
Choose $r_P$, $R_{\rm seed}\ge R_P$, and $\eps_{\rm mp}$ as in the statement, where \(R_P\) is the constant in
Lemma~\ref{lem:02}. The
boundary representation
\eqref{eq:031} and the already established estimate
\eqref{eq:028} control the moments and their shape
derivatives uniformly for $q\in B_{X_P}(0,2r_P)$.
Moreover, for $|\beta|\le s$,
\[
 |D_z^\beta z^{-jN}|
 \le C_s(1+jN)^s r_-^{-jN-s},
 \qquad z\in\mathcal A.
\]
The resulting geometric majorant is summable uniformly for
$0\le\eps\le\eps_{\rm mp}$ and $\|q\|_{X_P}\le r_P$, also after one
Fr\'echet derivative in $q$.  The same argument on every smaller closed ball
in $B_{X_P}(0,2r_P)$ gives the asserted continuity there.  Termwise
differentiation and the Weierstrass test prove the stated mapping properties. This proves the lemma.
\end{proof}
\endgroup

\subsection{\texorpdfstring{The outer equation}{The outer equation}}
\label{sec:outer}

For $\eps>0$ define the renormalized outer residual
\begin{equation}\label{eq:079}
 \mathscr F_{\rm o}(\eps,g,h,\lambda)
 :=t_{\rm o}^{-1}\cF_0
 \bigl(\eps,\Om_{\eps,\lambda},t_{\rm o}g,t_{\rm i}h\bigr), 
\end{equation}
where $\cF_0$ is defined in \eqref{eq:F0-def}. 
For $q$ near zero, set
\begin{equation}\label{eq:PN-def}
 P_N(q)(\theta)
 :=\frac1{2\pi N}\pa_\theta
 \Rea\!\left(M_N(q)e^{-\mathrm{i}N\theta}\right)
 =-\frac{M_N(q)}{2\pi}\sin(N\theta).
\end{equation}
Here $M_N(q)$ is the $N$-th area moment defined in
\eqref{eq:026}; it depends on $q$, but not on $\theta$.  Since
$P(q)=\overline{P(q)}$,
\[
 \overline{M_N(q)}
 =\int_{P(q)}\bar z^N\,\mathrm{d}A(z)
 =\int_{\overline{P(q)}}z^N\,\mathrm{d}A(z)=M_N(q),
\]
so $M_N(q)\in\R$ and the second identity in \eqref{eq:PN-def} follows.
Also define
\begin{equation}\label{eq:080}
 L_\mu\left(\sum_{k\ge1}g_{kN}\cos(kN\theta)\right)
 :=\sum_{k\ge1}\left(\frac12-kN\mu\right)
 g_{kN}\sin(kN\theta).
\end{equation}
Thus $P_N(q)\in Y_N$, and $q\mapsto P_N(q)$ is of class $C^1$.

\begingroup
\begin{proposition}[Outer expansion]\label{prop:03}
For $R>0$, set
\begin{equation}\label{eq:081}
 \mathbb B_R
 :=B_{X_N}(0,R)\times B_{X_P}(0,R)\times(-R,R).
\end{equation}
There exist $\eps_R\in(0,\eps_1]$ and $C_R>0$ such that the following
properties hold.
\begin{enumerate}[label=\textup{(\roman*)}]
\item For $0<\eps\le\eps_R$, the map $\mathscr F_{\rm o}$ defined by
\eqref{eq:079} is well defined on $\mathbb B_R$ and admits
a continuous extension
\[
 \mathscr F_{\rm o}:
 [0,\eps_R]\times\mathbb B_R\longrightarrow Y_N.
\]
For every fixed $\eps\in[0,\eps_R]$, this map is Fr\'echet $C^1$ in
$(g,h,\lambda)$, and
\[
 D_{(g,h,\lambda)}\mathscr F_{\rm o}
 \in C\!\left(
 [0,\eps_R]\times\mathbb B_R;
 \mathcal L(X_N\times X_P\times\R,Y_N)
 \right),
\]
where this derivative is continuous in operator norm.

Moreover, on $[0,\eps_R]\times\mathbb B_R$ one has
\begin{equation}\label{eq:082}
 \mathscr F_{\rm o}(\eps,g,h,\lambda)
 =L_{\delta-t_{\rm i}\lambda}g+P_N(t_{\rm i}h)
  +\eps^2\mathscr R_{\rm o}(\eps,g,h,\lambda),
\end{equation}
where
\[
 \mathscr R_{\rm o}
 \in C\bigl([0,\eps_R]\times\mathbb B_R;Y_N\bigr)
\]
and
\[
 D_{(g,h,\lambda)}\mathscr R_{\rm o}
 \in C\!\left(
 [0,\eps_R]\times\mathbb B_R;
 \mathcal L(X_N\times X_P\times\R,Y_N)
 \right).
\]
Moreover,
\begin{align}
 \sup_{\substack{0\le\eps\le\eps_R\\
 (g,h,\lambda)\in\mathbb B_R}}
 \Bigl(&\|\mathscr R_{\rm o}(\eps,g,h,\lambda)\|_{Y_N}
 \notag\\[-1mm]
 &+\|D_{(g,h,\lambda)}\mathscr R_{\rm o}
 (\eps,g,h,\lambda)\|_{
 \mathcal L(X_N\times X_P\times\R,Y_N)}
 \Bigr)
 \le C_R.
 \label{eq:083}
\end{align}
In particular,
\begin{equation}\label{eq:084}
 \mathscr F_{\rm o}(0,g,h,\lambda)=L_\delta g+P_N(0).
\end{equation}

\item The operator $L_\delta:X_N\to Y_N$ is an isomorphism and satisfies
\begin{equation}\label{eq:085}
 \|g\|_{X_N}
 \le C_{N,\delta,\alpha}\|L_\delta g\|_{Y_N},
 \qquad g\in X_N.
\end{equation}

\item The limiting outer equation
\[
 L_\delta g+P_N(0)=0
\]
has the unique solution
\begin{equation}\label{eq:g-star}
 g_*(\theta)=a_N\cos(N\theta),
 \qquad
 a_N=\frac{M_N(P)}{\pi(1-2N\delta)}>0.
\end{equation}
\end{enumerate}
\end{proposition}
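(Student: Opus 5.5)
The plan is to split $\cF_0$ into a pure Rankine part, the rotation term, and the hole part, and to expand each to the order needed after division by $t_{\rm o}=\eps^{N+2}$. Write
\[
 \cF_0(\eps,\Om,r,q)=\mathcal N(r)-\Om r'-\pa_\theta\bigl[\psi_{\eps P(q)}(z_r(\theta))\bigr],
 \qquad
 \mathcal N(r):=\pa_\theta\bigl[\psi_{D_0(r)}(z_r(\theta))\bigr].
\]
For $r\in X_N$ small, $\mathcal N:X_N\to Y_N$ is the classical simply connected $V$-state functional in the area-preserving polar chart. I will take from the Burbea/Hmidi--Mateu--Verdera theory that it is real analytic (at least $C^2$) near $0$, with $\mathcal N(0)=0$. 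Its derivative on $\cos(n\theta)$ is a multiple of $\sin(n\theta)$, and I expect it to combine with $-\Om r'$ into the known dispersion relation: at $\Om=\tfrac12-\mu$ the coefficient is $\tfrac12-n\mu$. This is exactly the operator $L_\mu$ of \eqref{eq:080}.

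\emph{The outer expansion.} Substitute $r=t_{\rm o}g$, $q=t_{\rm i}h$ and $\Om=\Om_{\eps,\lambda}$, so that $\Om_{\eps,\lambda}=\tfrac12-(\delta-t_{\rm i}\lambda)$. The Rankine and rotation parts contribute $L_{\delta-t_{\rm i}\lambda}g$ plus
\[
 t_{\rm o}\int_0^1(1-s)D^2\mathcal N(st_{\rm o}g)[g,g]\,\mathrm ds ,
\]
and this integral form extends continuously and $C^1$ to $\eps=0$. For the hole part I will invoke Lemma~\ref{lem:04} on an annulus $\mathcal A$ containing all curves $z_{t_{\rm o}g}(\T)$. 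This gives three pieces:
\begin{itemize}[leftmargin=2em]
\item The monopole term differentiates to $\eps^2\mathscr A(q)\,r'/(2\pi(1+2r))$. This is $O(\eps^2t_{\rm o})$, hence contributes $O(\eps^2)$ after division by $t_{\rm o}$.
\item The $N$-th multipole, evaluated at $r=0$, enters with the minus sign in $\cF_0$ and gives exactly $t_{\rm o}P_N(q)=t_{\rm o}P_N(t_{\rm i}h)$. Evaluating at $z_r$ instead of $e^{\mathrm i\theta}$ costs $O(t_{\rm o}\,\|r\|)=O(t_{\rm o}^2)$ via the $C^s(\mathcal A)$ bounds.
\item The remainder $\eps^{2N+2}\mathscr R_{\rm mp}$ contributes $\eps^{N}\le\eps^2$ after division by $t_{\rm o}$, since $N\ge2$.
\end{itemize}
Every correction is therefore $\eps^2$ times a map. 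Composition with $\theta\mapsto z_{t_{\rm o}g}(\theta)$ and one $\theta$-derivative stays in $C^{1-\alpha}$, and the $C^1$-in-$q$ continuity of $\mathscr R_{\rm mp}$ carries over. Together these give \eqref{eq:082}, \eqref{eq:083} and \eqref{eq:084}. Symmetry ($D_N$ and reflection) keeps everything in $Y_N$.

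\emph{Part (ii).} Since $\tfrac12-kN\delta$ grows in $k$, $L_\delta$ is a first-order operator. Writing $\mathcal H$ for the conjugation $\cos(kN\theta)\mapsto\sin(kN\theta)$, the defining formula \eqref{eq:080} gives $L_\delta=-\delta\,\mathcal H\pa_\theta\mathcal H+\tfrac12\mathcal H$ on $X_N$. Nonresonance \eqref{eq:060} makes every multiplier $\tfrac12-kN\delta$ nonzero. Moreover $|\tfrac12-kN\delta|\ge c\,k$ uniformly in $k$, because $\delta\in(0,1/(2N))$ is fixed and positive. The inverse is then $\sum b_k\sin(kN\theta)\mapsto\sum b_k(\tfrac12-kN\delta)^{-1}\cos(kN\theta)$. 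I will factor it as a smooth symbol $m(k)=kN/(\tfrac12-kN\delta)$, times the inverse of $\pa_\theta$ on mean-zero functions, times $\mathcal H$. The symbol satisfies the Marcinkiewicz/Hölder multiplier bounds, and $\mathcal H$ is bounded on $C^{1-\alpha}$. Hence the inverse maps $Y_N$ boundedly into $X_N$, which is \eqref{eq:085}.

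\emph{Part (iii).} By \eqref{eq:PN-def}, $P_N(0)=-\frac{M_N(P)}{2\pi}\sin(N\theta)$. Only the $k=1$ mode is forced, so the unique solution is $g_*=a_N\cos(N\theta)$ with $(\tfrac12-N\delta)a_N=M_N(P)/(2\pi)$, i.e. $a_N=M_N(P)/(\pi(1-2N\delta))$. Positivity follows from $M_N(P)>0$ and $\delta<1/(2N)$ in Proposition~\ref{prop:02}.

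The main obstacle is the first step: obtaining the $C^1$-in-$(g,h,\lambda)$ continuity at $\eps=0$ of the renormalized Rankine remainder, uniformly in operator norm on Hölder spaces. I would obtain it from $C^2$ regularity of $\mathcal N$, proved by the standard singular-integral/contour-dynamics estimates, together with the Taylor integral formula above.
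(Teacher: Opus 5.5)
Your proposal is correct and follows essentially the same route as the paper: a Taylor expansion of the Rankine functional about the disk with linearization $L_\mu$, the three-piece treatment of the hole potential from Lemma~\ref{lem:04} (monopole, evaluation-corrected $N$-th multipole, and tail with prefactor $\eps^{N-2}$ after division by $t_{\rm o}$), and explicit Fourier inversion of $L_\delta$ under nonresonance. The paper differs in two places. It proves smoothness of the Rankine map directly via the Cauchy--Green contour formula and divided differences rather than citing it, and it bounds $L_\delta^{-1}$ by splitting $1/(\tfrac12-\delta m)$ into multipliers of orders $-1$ and $-2$. In your factorization the extra $\mathcal H$ should be dropped, since up to sign $\pa_\theta^{-1}$ already sends $\sin(kN\theta)$ to $\cos(kN\theta)$.
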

\endgroup

\begin{proof}
\begingroup
\noindent\emph{Step 1. The Rankine contribution.}
Fix $R>0$ and put
\[
 I:=\left[\frac\delta2,\frac{3\delta}{2}\right].
\]
After shrinking an open neighborhood $\mathcal U_{\mathcal S}\subset X_N$
of zero, define the local map
\begin{equation}\label{eq:086}
 \mathcal S:\mathcal U_{\mathcal S}\times I\longrightarrow Y_N,
 \qquad
 \mathcal S(r,\mu)
 :=\pa_\theta\left[
 \psi_{D_0(r)}(z_r)-\frac{\frac12-\mu}{2}|z_r|^2\right].
\end{equation}
The map is $C^\infty$.  To verify this directly, let $D$ be a $C^1$ domain.
Choosing $\Phi(y)=\frac{\bar{y}-\bar{x}}{y-x}$ in \eqref{eq:025}, the Cauchy--Green formula  on $D\setminus\overline{B_\eta(x)}$, followed by
$\eta\downarrow0$, gives for $x\in\pa D$
\[
 \overline{\mathfrak g_D(x)}
 =\frac{\mathrm{i}}{4\pi}\int_{\pa D}
   \frac{\bar y-\bar x}{y-x}\,\mathrm{d}y,
 \qquad
 \mathfrak g_D=\pa_{x_1}\psi_D+\mathrm{i}\pa_{x_2}\psi_D.
\]
The additional circular arc and the omitted boundary piece are both $O(\eta)$.
Consequently,
\begin{equation}\label{eq:087}
 \pa_\theta\psi_{D_0(r)}(z_r(\theta))
 =\frac1{4\pi}\Rea\left\{
 \mathrm{i}z_r'(\theta)\int_0^{2\pi}
 \frac{\overline{z_r(\varphi)}-\overline{z_r(\theta)}}
      {z_r(\varphi)-z_r(\theta)}z_r'(\varphi)\,\mathrm{d}\varphi
 \right\}.
\end{equation}
For $z\in C^{2-\alpha}(\T)$, let
\[
 (\mathfrak Dz)(\theta,\varphi)
 :=\begin{cases}
 \displaystyle\frac{z(\varphi)-z(\theta)}
 {e^{\mathrm{i}\varphi}-e^{\mathrm{i}\theta}},&\varphi\ne\theta,\\[3mm]
 \displaystyle\frac{z'(\theta)}{\mathrm{i}e^{\mathrm{i}\theta}},
 &\varphi=\theta.
 \end{cases}
\]
Writing $\varphi=\theta+\tau$ expresses this quotient as the ratio of the
integrals of $z'$ and $\mathrm{i}e^{\mathrm{i}\cdot}$ along the segment;
the denominator has modulus at least $2/\pi$ for $|\tau|\le\pi$.  Thus
\begin{equation}\label{eq:088}
 \|\mathfrak Dz\|_{C^{1-\alpha}(\T^2)}
 \le C\|z\|_{C^{2-\alpha}(\T)}.
\end{equation}
If $Q_r=\mathfrak Dz_r$, then $Q_0=1$ and, locally in $r$,
\begin{equation}\label{eq:089}
 \frac{\overline{z_r(\varphi)}-\overline{z_r(\theta)}}
      {z_r(\varphi)-z_r(\theta)}
 =-e^{-\mathrm{i}(\theta+\varphi)}
   \frac{\overline{Q_r(\theta,\varphi)}}{Q_r(\theta,\varphi)}.
\end{equation}
The Banach-algebra property and smooth inversion in
$C^{1-\alpha}(\T^2)$ remove the apparent diagonal singularity in
\eqref{eq:087}, proving the stated local
$C^\infty$ mapping property.

At the disk, the thin-annulus formula and dominated convergence give
\[
 D_r\psi_{D_0(r)}(e^{\mathrm{i}\theta})\big|_{r=0}g
 =\frac1{2\pi}\int_0^{2\pi}
 \log|e^{\mathrm{i}\theta}-e^{\mathrm{i}\varphi}|
 g(\varphi)\,\mathrm{d}\varphi.
\]
The displacement of the evaluation point contributes $g/2$, and the
centrifugal term contributes $-(1/2-\mu)g$.  The sign of the remaining
Fourier multiplier follows from
\[
 \log|e^{\mathrm{i}\theta}-\varrho e^{\mathrm{i}\varphi}|
 =-\Rea\sum_{n\ge1}
   \frac{\varrho^n e^{\mathrm{i}n(\varphi-\theta)}}{n},
 \qquad 0<\varrho<1,
\]
which yields, after termwise integration and then $\varrho\uparrow1$,
\[
 \frac1{2\pi}\int_0^{2\pi}
 \log|e^{\mathrm{i}\theta}-e^{\mathrm{i}\varphi}|
 \cos(n\varphi)\,\mathrm{d}\varphi
 =-\frac1{2n}\cos(n\theta).
\]
Therefore
\begin{equation}\label{eq:090}
 D_r\mathcal S(0,\mu)g=L_\mu g.
\end{equation}

Since $\mathcal S(0,\mu)=0$, Taylor's formula gives
\begin{equation}\label{eq:091}
 t^{-1}\mathcal S(tg,\mu)
 =L_\mu g+t\mathscr R_{\mathcal S}(t,g,\mu),
\end{equation}
where
\begin{equation}\label{eq:092}
 \mathscr R_{\mathcal S}(t,g,\mu)
 :=\int_0^1(1-s)
 D_r^2\mathcal S(stg,\mu)[g,g]\,\mathrm{d}s.
\end{equation}
A finite cover of the compact interval $I$ supplies $\eta,C>0$ such that
\begin{equation}\label{eq:093}
 \|D_r^2\mathcal S(r,\mu)\|
 +\|D_r^3\mathcal S(r,\mu)\|
 +\|D_\mu D_r^2\mathcal S(r,\mu)\|
 \le C
 \quad\text{for }\|r\|_{X_N}\le\eta,\ \mu\in I.
\end{equation}
Choose $t_R>0$ with $t_RR\le\eta$.  Differentiating
\eqref{eq:092} gives
\begin{align}
 D_g\mathscr R_{\mathcal S}(t,g,\mu)[\dot g]
 &=\int_0^1(1-s)\Bigl(
 D_r^2\mathcal S(stg,\mu)[\dot g,g]
 +D_r^2\mathcal S(stg,\mu)[g,\dot g]\notag\\
 &\hspace{7em}+stD_r^3\mathcal S(stg,\mu)[\dot g,g,g]
 \Bigr)\,\mathrm{d}s,
 \label{eq:094}\\
 D_\mu\mathscr R_{\mathcal S}(t,g,\mu)[\dot\mu]
 &=\int_0^1(1-s)
 \bigl(D_\mu D_r^2\mathcal S(stg,\mu)[g,g]\bigr)
 \dot\mu\,\mathrm{d}s.
 \label{eq:095}
\end{align}
Hence
\begin{equation}\label{eq:096}
 \sup_{\substack{|t|\le t_R,\ \|g\|_{X_N}\le R\\ \mu\in I}}
 \bigl(\|\mathscr R_{\mathcal S}\|+
 \|D_g\mathscr R_{\mathcal S}\|+
 \|D_\mu\mathscr R_{\mathcal S}\|\bigr)\le C_R,
\end{equation}
with the norms taken in the spaces stated in the proposition.

\medskip
\noindent\emph{Step 2. The contribution of the holes.}
Set
\[
 t=t_{\rm o}=\eps^{N+2},\qquad
 q=t_{\rm i}h=\eps^{2N}h,\qquad
 \mu=\delta-t_{\rm i}\lambda,
 \qquad
 z_{t,g}=\sqrt{1+2tg}\,e^{\mathrm{i}\theta}.
\]
Introduce the single difference quotient
\begin{equation}\label{eq:097}
 \Theta_N(t,g):=
 \begin{cases}
 \displaystyle\frac{(1+2tg)^{-N/2}-1}{t},&t\ne0,\\[2mm]
 -Ng,&t=0.
 \end{cases}
\end{equation}
It is $C^1$ on every tube where $1+2tg$ stays positive because
\begin{align*}
 \Theta_N(t,g)
 &=-Ng\int_0^1(1+2stg)^{-N/2-1}\,\mathrm{d}s,\\
 D_g\Theta_N(t,g)[\dot g]
 &=-N(1+2tg)^{-N/2-1}\dot g.
\end{align*}
The far-field expansion \eqref{lem:04} and
$\pa_\theta\log|z_{t,g}|=tg'/(1+2tg)$ give the exact identity
\begin{equation}\label{eq:098}
 -\frac1{t_{\rm o}}\pa_\theta
 \psi_{\eps P(q)}(z_{t,g})
 =P_N(q)+\eps^2\mathscr R_H(\eps,g,h),
\end{equation}
where $P_N$ is defined by \eqref{eq:PN-def} and
\begin{align}
 \mathscr R_H(\eps,g,h)
 :={}&-\frac{\mathscr A(q)}{2\pi}\frac{g'}{1+2tg}
 +\frac{\eps^N}{2\pi N}\pa_\theta\Rea\!\left(
 M_N(q)\Theta_N(t,g)e^{-\mathrm{i}N\theta}\right)\notag\\
 &-\eps^{N-2}\pa_\theta
 \mathscr R_{\rm mp}(\eps,z_{t,g},q).
 \label{eq:099}
\end{align}
The three terms in \eqref{eq:099} represent,
respectively, the shape-dependent monopole contribution, the evaluation
correction to the $N$-th multipole, and the higher-multipole tail.
Combining \eqref{eq:091} and
\eqref{eq:098} proves \eqref{eq:082} with
\begin{equation}\label{eq:100}
 \mathscr R_{\rm o}(\eps,g,h,\lambda)
 =\eps^N\mathscr R_{\mathcal S}
 (t_{\rm o},g,\delta-t_{\rm i}\lambda)
 +\mathscr R_H(\eps,g,h).
\end{equation}

We now choose the parameter range on the fixed product ball.  Apply
Lemma~\ref{lem:04} on $\{1/2\le|z|\le3/2\}$ and denote its shape
radius by $r_P$.  Decrease $\eps_R>0$ so that, uniformly on
$[0,\eps_R]\times\mathbb B_R$,
\[
 t_{\rm o}g\in\mathcal U_{\mathcal S}\cap\mathcal U_N,
 \quad t_{\rm i}h\in\mathcal U_P,
 \quad\|t_{\rm i}h\|_{X_P}\le r_P,
 \quad\mu\in I,
 \quad t_{\rm o}\le t_R,
\]
$1+2t_{\rm o}g$ is uniformly positive,
$z_{t_{\rm o},g}(\T)\subset\{1/2\le|z|\le3/2\}$, and
$\eps\overline{P(t_{\rm i}h)}\Subset D_0(t_{\rm o}g)$.  Thus the original
residual is well defined for $\eps>0$ on this fixed domain.

The chain factors introduced by the weighted variables are
\begin{equation}\label{eq:101}
 D_gz_{t_{\rm o},g}[\dot g]
 =\frac{t_{\rm o}\dot g}{\sqrt{1+2t_{\rm o}g}}
 e^{\mathrm{i}\theta},
 \qquad D_hq=t_{\rm i}I,
 \qquad D_\lambda\mu=-t_{\rm i}.
\end{equation}
Lemma~\ref{lem:02}, Lemma~\ref{lem:04} with $s=3$,
the H\"older Banach-algebra estimate, and
\eqref{eq:096} therefore give the uniform
bound \eqref{eq:083}.  They also give operator-norm
continuity of the derivative.  The first two terms in
\eqref{eq:099} are controlled by the $C^1$ maps
$\mathscr A$ and $M_N$.  The last term is controlled by the spatial
derivatives supplied by the $C^3$-bound in Lemma~\ref{lem:04} and by
the first Fr\'echet derivative of $\mathscr R_{\rm mp}$ with respect to the
shape variable.

 Define
\begin{equation}\label{eq:102}
 \mathscr R_{\rm o}(0,g,h,\lambda)
 :=-\frac{|P|}{2\pi}g'
 -\one_{\{N=2\}}\pa_\theta
  \mathscr R_{\rm mp}(0,e^{\mathrm{i}\theta},0).
\end{equation}
Then
\begin{equation}\label{eq:103}
 \begin{aligned}
 D_g\mathscr R_{\rm o}(0,g,h,\lambda)[\dot g]
 &=-\frac{|P|}{2\pi}\dot g',\\
 D_h\mathscr R_{\rm o}(0,g,h,\lambda)&=0,
 \qquad
 D_\lambda\mathscr R_{\rm o}(0,g,h,\lambda)=0.
 \end{aligned}
\end{equation}
The tail in \eqref{eq:099} has the order-one prefactor
$\eps^{N-2}=1$ when $N=2$, which produces the second term of
\eqref{eq:102}; for $N\ge3$ that prefactor tends to
zero.  Thus, when $N=2$, it retains this tail, whereas its
$g$- and $h$-derivatives vanish because \eqref{eq:101}
supplies $t_{\rm o}$ and $t_{\rm i}$, respectively.  The first term in
\eqref{eq:100} has the prefactor $\eps^N$, and
its $\lambda$ derivative has the additional factor $-t_{\rm i}$.  The
remaining $h$ derivatives carry $t_{\rm i}$, while the $N$-moment correction
also carries $\eps^N$.  These facts prove uniform convergence of the
remainder and its unknown derivative in operator norm, including at
$\eps=0$, and establish the first item of the proposition.

\medskip
\noindent\emph{Step 3. Fourier inversion and the limiting root.}
For
\[
 y(\theta)=\sum_{k\ge1}y_{kN}\sin(kN\theta)\in Y_N,
\]
nonresonance gives the explicit inverse
\begin{equation}\label{eq:104}
 (L_\delta^{-1}y)(\theta)
 =\sum_{k\ge1}\frac{y_{kN}}{\frac12-kN\delta}
 \cos(kN\theta).
\end{equation}
For $m=kN$,
\[
 \frac1{\frac12-\delta m}
 =-\frac1{\delta m}
 +\frac1{2\delta m(\frac12-\delta m)}.
\]
The two terms on the right are periodic multipliers of orders $-1$ and
$-2$ on the nonresonant $N$-fold subspace.  The multiplier estimate
therefore proves \eqref{eq:085} and the bounded bijectivity of
$L_\delta:X_N\to Y_N$.  Finally,
$P_N(0)=-M_N(P)\sin(N\theta)/(2\pi)$, so the unique limiting root is
\[
 g_*(\theta)
 =\frac{M_N(P)}{\pi(1-2N\delta)}\cos(N\theta).
\]
Its coefficient is positive by Proposition~\ref{prop:02} and
$\delta<1/(2N)$, proving \eqref{eq:g-star}.
\endgroup
\end{proof}

\subsection{\texorpdfstring{Harmonic feedback at the inner scale}
{Harmonic feedback at the inner scale}}
\label{sec:inner}

The outer deformation is small on the unit scale, but it is sampled at points
of size $\eps$ in the inner equation.  For $t$ near zero and $g\in X_N$, put
\begin{equation}\label{eq:Wtg-def}
 W_{t,g}(x):=
 \begin{cases}
 \displaystyle\frac{\psi_{D_0(tg)}(x)-\psi_\D(x)}{t},&t\ne0,\\[2mm]
 \displaystyle\frac1{2\pi}\int_0^{2\pi}
 g(\varphi)\log|x-e^{\mathrm{i}\varphi}|\,\mathrm{d}\varphi,&t=0.
 \end{cases}
\end{equation}
For $|x|\le1/2$ this function is harmonic.  Its dihedral invariance means
specifically that
\begin{equation}\label{eq:105}
 W_{t,g}(\zeta z)=W_{t,g}(z),
 \qquad
 W_{t,g}(\bar z)=W_{t,g}(z).
\end{equation}
\begingroup
\begin{lemma}\label{lem:05}
For $R>0$, set
\[
 \mathbb B_R^{\rm jet}:=B_{X_N}(0,R)\times B_{X_P}(0,R).
\]
There exist $\eps_R>0$ and $C_R>0$ such that, for
$0<\eps\le\eps_R$ and $(g,h)\in\mathbb B_R^{\rm jet}$,
\begin{align}
 &\eps^{-N}\left[
 W_{t_{\rm o},g}\bigl(\eps p_{t_{\rm i}h}(w)\bigr)
 -W_{t_{\rm o},g}(0)\right]\notag\\
 &\hspace{25mm}
 =-\frac{g_N}{2N}\Rea\bigl(p_0(w)^N\bigr)
  +\eps^N\mathscr R_{\rm jet}(\eps,g,h)(w),
 \label{eq:106}
\end{align}
where $g_N$ is the coefficient of $\cos(N\theta)$ in $g$, and
$\mathscr R_{\rm jet}(\eps,\cdot)$ is Fr\'echet $C^1$ on
$\mathbb B_R^{\rm jet}$.  Moreover,
\begin{equation}\label{eq:107}
 \sup_{\substack{0<\eps\le\eps_R\\(g,h)\in\mathbb B_R^{\rm jet}}}
 \left(
 \|\mathscr R_{\rm jet}(\eps,g,h)\|_{C^{2-\alpha}}
 +\|D_{(g,h)}\mathscr R_{\rm jet}(\eps,g,h)\|_{
   \mathcal L(X_N\times X_P,C^{2-\alpha})}
 \right)\le C_R.
\end{equation}
If the entire left-hand side of \eqref{eq:106} is denoted
by $\mathscr J_\eps(g,h)$ and one sets
\[
 \mathscr J_0(g,h):=-\frac{g_N}{2N}\Rea(p_0^N),
\]
then
\begin{align*}
 \mathscr J&\in C([0,\eps_R]\times\mathbb B_R^{\rm jet};C^{2-\alpha}),\\
 D_{(g,h)}\mathscr J&\in C\!\left(
 [0,\eps_R]\times\mathbb B_R^{\rm jet};
 \mathcal L(X_N\times X_P,C^{2-\alpha})\right).
\end{align*}
\end{lemma}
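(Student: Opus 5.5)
The plan is to expand the harmonic function $W_{t,g}$ near the origin in a power series and then use the dihedral symmetry to discard all low-degree terms. Since $W_{t,g}$ is harmonic on $|x|\le 1/2$ and real, we write $W_{t,g}(x)=W_{t,g}(0)+\Rea\sum_{n\ge1}c_n(t,g)x^n$. The invariance \eqref{eq:105} forces $c_n=0$ unless $N\mid n$, and $c_{kN}\in\R$. Hence
\[
 W_{t,g}(x)-W_{t,g}(0)=\Rea\bigl(c_N(t,g)x^N\bigr)+\Rea\sum_{k\ge2}c_{kN}(t,g)x^{kN}.
\]
Evaluating at $x=\eps p_{t_{\rm i}h}(w)$ and dividing by $\eps^N$ gives the main term $\Rea(c_N(t_{\rm o},g)\,p_{t_{\rm i}h}(w)^N)$ plus a tail carrying the factor $\eps^{N}$. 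The tail is $\eps^N\Rea\sum_{k\ge2}c_{kN}\eps^{(k-2)N}p^{kN}$, which converges geometrically once $\eps\,|p|\le \eps R_{\rm seed}<1/4$.

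Next we identify $c_N$. At $t=0$, expanding $\log|x-e^{\mathrm i\varphi}|=-\Rea\sum_n x^n e^{-\mathrm in\varphi}/n$ in the explicit formula \eqref{eq:Wtg-def} gives $c_N(0,g)=-g_N/(2N)$. For $t\ne0$, we represent $W_{t,g}$ through the thin-annulus integral
\[
 W_{t,g}(x)=\frac1{2\pi t}\int_0^{2\pi}\int_1^{\sqrt{1+2tg(\varphi)}}\log|x-se^{\mathrm i\varphi}|\,s\,\mathrm ds\,\mathrm d\varphi .
\]
Expanding the logarithm in the same way yields
\[
 c_n(t,g)=-\frac1{2\pi n t}\int_0^{2\pi}\int_1^{\sqrt{1+2tg}}s^{1-n}\,\mathrm ds\, e^{-\mathrm in\varphi}\,\mathrm d\varphi .
\]
The inner integral equals $t g\,\Theta$-type difference quotients, analogous to \eqref{eq:097}, that are smooth in $(t,g)$ with bounds $C^n$ uniformly for $t\le t_R$. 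This shows that $c_n$ is $C^1$ in $g$ with $|c_n|+\|D_gc_n\|\le C_R\,2^{n}$, say, and that $c_N(t,g)=-g_N/(2N)+O(t)$ with a $C^1$ remainder.

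We then reorganise all errors into the form $\eps^N\mathscr R_{\rm jet}$. Three sources appear. First, $(c_N(t_{\rm o},g)-c_N(0,g))\Rea p^N$ is $O(t_{\rm o})=O(\eps^{N+2})\subset \eps^N O(1)$. Second, $c_N(0,g)\Rea(p_{t_{\rm i}h}^N-p_0^N)$ is $O(t_{\rm i})=O(\eps^{2N})$, since $p_{t_{\rm i}h}=p_0+\rho_*^2t_{\rm i}h$ and $p\mapsto p^N$ is polynomial and hence smooth on $C^{2-\alpha}$. Third, the tail is already $\eps^N$ times a uniformly convergent series. Each term is Fr\'echet $C^1$ in $(g,h)$, because the chain factors $t_{\rm o}$ and $t_{\rm i}$ only help, and $C^{2-\alpha}$ is a Banach algebra, which bounds $\|p^{kN}\|_{C^{2-\alpha}}\le C (kN) R^{kN}$. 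Summing with $\eps^{(k-2)N}$ then yields \eqref{eq:107}. Continuity of $\mathscr J$ and of $D\mathscr J$ up to $\eps=0$ follows because every correction term carries a positive power of $\eps$ and has uniformly bounded derivative.

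The main obstacle is the uniform, derivative-level control of the coefficients $c_{kN}(t,g)$ and of the $t\to0$ limit. This is the step where the singular logarithm near the boundary must be avoided, and it is why we work only on $|x|\le1/2$, where the series in $s e^{\mathrm i\varphi}$ with $s\ge 1-Ct$ converges geometrically. We will check carefully that the difference quotient in $t$ is $C^1$ in $g$ with bounds growing at most geometrically in $n$.
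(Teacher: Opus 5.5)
Your proof is correct and follows the paper's argument: a dihedral power-series expansion of the harmonic function $W_{t,g}$, identification of the degree-$N$ coefficient $-g_N/(2N)$ at $t=0$, and a split of the error into the $O(t_{\rm o})$ coefficient error, the $O(t_{\rm i})$ evaluation error, and the $k\ge2$ tail. The one variation is how you get the coefficients: you expand the logarithm inside the thin-annulus integral, which is the paper's separated-kernel formula \eqref{eq:108} after the substitution $s^2=1+2\sigma tg$, to obtain explicit coefficients, whereas the paper uses orthogonality on a fixed circle $|z|=r_0$. In the tail summation, bound $p$ by a $C^{2-\alpha}$ constant $B$ (with $\|p^m\|_{C^{2-\alpha}}\lesssim m^2B^m$) rather than by the sup bound $R_{\rm seed}$, though geometric decay makes this harmless.
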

\endgroup

\begin{proof}
Fix $R>0$.  All constants below are chosen uniformly for
$(g,h)\in\mathbb B_R^{\rm jet}$, after decreasing $\eps_R$ when necessary.
Polar integration in the thin region between the unit circle and the graph
$z_{tg}$ gives the separated-kernel formula
\begin{align}
 W_{t,g}(x)
 =\frac1{2\pi}\int_0^{2\pi}g(\varphi)\int_0^1
 \log\left|x-\sqrt{1+2\sigma t g(\varphi)}e^{\mathrm{i}\varphi}\right|
 \,\mathrm{d}\sigma\,\mathrm{d}\varphi.
 \label{eq:108}
\end{align}
For $|x|\le1/2$ the two arguments of the logarithm remain uniformly
separated.  Hence $(t,g)\mapsto W_{t,g}$ is $C^\infty$ with values in
$C^k(\overline{B_{1/2}})$ for every fixed $k$.  Its derivatives are
uniformly bounded when $|t|$ is small and $g\in B_{X_N}(0,R)$.

At $t=0$, expansion of the logarithm in complex powers gives
\begin{equation}\label{eq:109}
 W_{0,g}(z)-W_{0,g}(0)
 =-\sum_{k\ge1}\frac{g_{kN}}{2kN}\Rea(z^{kN}),
 \qquad |z|<1.
\end{equation}
For nonzero $t$, harmonicity and \eqref{eq:105} give the
convergent expansion
\begin{equation}\label{eq:110}
 W_{t,g}(z)-W_{t,g}(0)
 =\sum_{k\ge1}a_{kN}(t,g)\Rea(z^{kN}),
 \qquad |z|<\frac12.
\end{equation}
Indeed, the harmonic function on the left is the real part of a
holomorphic function on $B_{1/2}$.  Its power series there, together with
rotation by $\zeta$ and reflection, contains only the degrees $kN$ and has
real coefficients.
Fix $r_0\in(0,1/2)$.  Orthogonality on $|z|=r_0$ gives the coefficient formula
\begin{equation}\label{eq:111}
 a_{kN}(t,g)
 =\frac1{\pi r_0^{kN}}\int_0^{2\pi}
 \bigl[W_{t,g}(r_0e^{\mathrm{i}\theta})-W_{t,g}(0)\bigr]
 \cos(kN\theta)\,\mathrm{d}\theta.
\end{equation}
Formula \eqref{eq:109}, \eqref{eq:111}, and
the $C^\infty$ parameter dependence imply
\begin{equation}\label{eq:112}
 a_N(t,g)=-\frac{g_N}{2N}+O(t),
 \qquad
 |a_{kN}(t,g)|\le Cr_0^{-kN}\qquad(k\ge2).
\end{equation}
More precisely, let
\[
 \ell_N:X_N\to\R,
 \qquad \ell_N(g)=g_N.
\]
There is a constant $C_R$ such that, uniformly for $|t|$ small and
$g\in B_{X_N}(0,R)$,
\begingroup
\begin{align}
 &\left|a_N(t,g)+\frac{g_N}{2N}\right|
 +\left\|D_ga_N(t,g)+\frac1{2N}\ell_N\right\|_{
   \mathcal L(X_N,\R)}
 \le C_R|t|,
 \label{eq:113}\\
 &|a_{kN}(t,g)|
 +\|D_ga_{kN}(t,g)\|_{\mathcal L(X_N,\R)}
 \le C_R r_0^{-kN},\qquad k\ge2.
 \label{eq:114}
\end{align}
\endgroup
The first estimate follows from the identities at $t=0$ and the mean-value
formula in $t$, applied both to $a_N$ and to $D_ga_N$.  The second follows by
applying \eqref{eq:111} to $W_{t,g}$ and to
$D_gW_{t,g}[\dot g]$, using the uniform fixed-circle bounds supplied by
\eqref{eq:108}.

Insert $t=t_{\rm o}$ and
$p_{t_{\rm i}h}=p_0+\rho_*^2t_{\rm i}h$ in
\eqref{eq:110}.  After division by $\eps^N$, the $k=1$ term
is the first term on the right-hand side of
\eqref{eq:106}; its coefficient error is
$O(t_{\rm o})$, and its evaluation error is $O(t_{\rm i})$.  The sum over
$k\ge2$ is $O(\eps^N)$.  Since
$t_{\rm o}=O(\eps^N)$ and $t_{\rm i}=O(\eps^N)$ for $N\ge2$, the stated
estimate follows.  Choose $B_R\ge1$ so that
$\|p_{t_{\rm i}h}\|_{C^{2-\alpha}}\le B_R$ for
$h\in B_{X_P}(0,R)$ and $0\le\eps\le\eps_R$.  The H\"older algebra estimate
and its differentiated version
give
\begingroup
\[
 \|p^m\|_{C^{2-\alpha}}
 +\|D_p(p^m)\|_{\mathcal L(C^{2-\alpha},C^{2-\alpha})}
 \le C_R m^2B_R^m.
\]
\endgroup
After reducing $\eps_R$ so that $\eps_R B_R<r_0/2$, the geometric series obtained
from \eqref{eq:114} is uniformly summable, also after a
$g$ or $h$ derivative.  Thus, if
\begin{align*}
 E_\eps(g,h):={}&\eps^{-N}\left[
 W_{t_{\rm o},g}\bigl(\eps p_{t_{\rm i}h}\bigr)
 -W_{t_{\rm o},g}(0)\right]
 +\frac{g_N}{2N}\Rea(p_0^N),
\end{align*}
then
\begingroup
\begin{equation}\label{eq:115}
 \|E_\eps(g,h)\|_{C^{2-\alpha}}
 +\|D_{(g,h)}E_\eps(g,h)\|_{
   \mathcal L(X_N\times X_P,C^{2-\alpha})}
 \le C_R\eps^N.
\end{equation}
\endgroup
Here $D_hp_{t_{\rm i}h}[\dot h]=\rho_*^2t_{\rm i}\dot h$, so the
$h$ derivative has the same order.  Estimate
\eqref{eq:115}, together with the $C^\infty$
separated-kernel formula, proves
\eqref{eq:107} and the two asserted continuity
statements for $\mathscr J$.
\end{proof}

\subsection{\texorpdfstring{The inner equation}{The inner equation}}

\begingroup
Recall the augmented template operator from
Proposition~\ref{prop:02}\textnormal{(iii)}:
\begin{equation}\label{eq:A-def}
 \mathscr A_P:X_P\times\R\longrightarrow Y_P,
 \qquad
 \mathscr A_P[h,\lambda]
 =-D_q\cG(\delta,0)h+D_\mu\cG(\delta,0)\lambda.
\end{equation}
It is a bounded linear isomorphism by
\eqref{eq:063}.
\endgroup
Define also
\begin{equation}\label{eq:CN-def}
 C_Ng
 :=\pa_\theta\left[-\frac{g_N}{2N}
 \Rea\bigl(p_0(e^{\mathrm{i}\theta})^N\bigr)\right].
\end{equation}
The coefficient map $g\mapsto g_N$ is bounded on $X_N$; hence
$C_N:X_N\to Y_P$ is bounded.

For $\eps>0$ let
\begin{equation}\label{eq:116}
 \mathscr F_{\rm i}(\eps,g,h,\lambda)
 :=t_{\rm i}^{-1}\cF_1
 \bigl(\eps,\Om_{\eps,\lambda},t_{\rm o}g,t_{\rm i}h\bigr), 
\end{equation}
where $\cF_1$ is defined by \eqref{eq:F1-def}. 

\begingroup
\begin{proposition}[Inner expansion]\label{prop:04}
Put $Z:=X_N\times X_P\times\R$.
For every $R>0$ there exists $\eps_R>0$ such that the following properties
hold on the product ball $\mathbb B_R$ defined in
\eqref{eq:081}.
For $0<\eps\le\eps_R$, the residual
\eqref{eq:116} is well defined on $\mathbb B_R$ and admits
a continuous extension
\[
 \mathscr F_{\rm i}:[0,\eps_R]\times\mathbb B_R\longrightarrow Y_P.
\]
For each fixed $\eps\in[0,\eps_R]$ it is Fr\'echet $C^1$ in
$(g,h,\lambda)$, and
\[
 D_{(g,h,\lambda)}\mathscr F_{\rm i}
 \in C\!\left([0,\eps_R]\times\mathbb B_R;
 \mathcal L(Z,Y_P)\right),
\]
where this derivative is continuous in operator norm.

Moreover, one has
\begin{equation}\label{eq:117}
 \mathscr F_{\rm i}(\eps,g,h,\lambda)
 =C_Ng+\mathscr A_P[h,\lambda]+\mathscr R_{\rm i}(\eps,g,h,\lambda),
\end{equation}
where $\mathscr R_{\rm i}(0,\cdot)=0$ and
\begin{equation}\label{eq:118}
 \lim_{\eps\downarrow0}
 \sup_{(g,h,\lambda)\in\mathbb B_R}
 \left(
 \|\mathscr R_{\rm i}(\eps,g,h,\lambda)\|_{Y_P}
 +\|D_{(g,h,\lambda)}\mathscr R_{\rm i}(\eps,g,h,\lambda)\|_{
   \mathcal L(Z,Y_P)}
 \right)=0.
\end{equation}

In particular, the limiting map is
\begin{equation}\label{eq:119}
 \mathscr F_{\rm i}(0,g,h,\lambda)=C_Ng+\mathscr A_P[h,\lambda].
\end{equation}
\end{proposition}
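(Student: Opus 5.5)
The plan is to separate the inner residual $\cF_1$ into a pure template part and an outer-feedback part, and then identify each renormalized piece with the corresponding term of \eqref{eq:117}. The first step uses the splitting $\psi_{D_0(tg)}=\psi_\D+tW_{t,g}$ from \eqref{eq:Wtg-def}, with $t=t_{\rm o}$, valid on $|x|\le 1/2$. Combined with \eqref{eq:072}, it gives for $y$ in a fixed bounded set
\[
 Q_{\eps,t_{\rm o}g}(y)=\frac{|y|^2}{4}
 +\frac{t_{\rm o}}{\eps^2}\bigl[W_{t_{\rm o},g}(\eps y)-W_{t_{\rm o},g}(0)\bigr].
\]
Since $t_{\rm o}\eps^{-2}\eps^N=t_{\rm i}$, evaluating at $y=p_{t_{\rm i}h}(e^{\mathrm{i}\theta})$ makes the second term exactly $t_{\rm i}\mathscr J_\eps(g,h)$ in the notation of Lemma~\ref{lem:05}. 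Write $\mu=\delta-t_{\rm i}\lambda$, so that $\Om_{\eps,\lambda}=\frac12-\mu$. The quadratic terms then combine as $\frac{|p|^2}{4}-\frac{1}{2}\bigl(\frac12-\mu\bigr)|p|^2=\frac\mu2|p|^2$. Comparing with \eqref{eq:061}, this yields the exact identity
\[
 \cF_1\bigl(\eps,\Om_{\eps,\lambda},t_{\rm o}g,t_{\rm i}h\bigr)
 =-\cG\bigl(\delta-t_{\rm i}\lambda,\,t_{\rm i}h\bigr)
 +t_{\rm i}\,\pa_\theta\mathscr J_\eps(g,h).
\]
Before using it, I would choose $\eps_R$ so that, on $\mathbb B_R$, we have $t_{\rm i}h\in\mathcal V_P$, $|t_{\rm i}\lambda|<\eta_\mu$, $\eps\overline{P(t_{\rm i}h)}\subset B_{1/2}$, and the hypotheses of Lemma~\ref{lem:05} hold.

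The second step divides by $t_{\rm i}$ and uses $\cG(\delta,0)=0$ from \eqref{eq:062}. The template part becomes a difference quotient, which I would write in integral mean-value form:
\[
 -\frac{\cG(\delta-t_{\rm i}\lambda,t_{\rm i}h)-\cG(\delta,0)}{t_{\rm i}}
 =\int_0^1 D_{(\mu,q)}\cG\bigl(\delta-st_{\rm i}\lambda,\,st_{\rm i}h\bigr)[\lambda,-h]\,\mathrm{d}s.
\]
By \eqref{eq:063}, at $s$-independent base point $(\delta,0)$ this integrand is exactly $\mathscr A_P[h,\lambda]$. The $(h,\lambda)$-derivative of the quotient is the operator $D_{(\mu,q)}\cG(\delta-t_{\rm i}\lambda,t_{\rm i}h)\circ(\dot\lambda,\dot h)\mapsto(\dot\lambda,-\dot h)$, with no integral left. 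Since $\cG$ is Fr\'echet $C^1$ (Proposition~\ref{prop:02}(iii)) and the base points stay within distance $O(t_{\rm i}R)$ of $(\delta,0)$, continuity of $D\cG$ at $(\delta,0)$ in operator norm gives convergence to $\mathscr A_P$ and to $D\mathscr A_P=\mathscr A_P$. This convergence is uniform on $\mathbb B_R$, both for the values and for the derivatives.

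The feedback part is $\pa_\theta\mathscr J_\eps(g,h)$. Lemma~\ref{lem:05} gives $\mathscr J_\eps\to\mathscr J_0$ together with the $(g,h)$-derivatives, with error $O(\eps^N)$ in $C^{2-\alpha}$ by \eqref{eq:107}. Since $\pa_\theta:C^{2-\alpha}\to C^{1-\alpha}$ is bounded, and $\pa_\theta\mathscr J_0(g,h)=C_Ng$ by \eqref{eq:CN-def}, this piece converges to $C_Ng$. I would then set
\[
 \mathscr R_{\rm i}:=\mathscr F_{\rm i}-C_Ng-\mathscr A_P[h,\lambda],
 \qquad \mathscr R_{\rm i}(0,\cdot):=0.
\]
The two convergence statements give \eqref{eq:118}, continuity of the extension at $\eps=0$, and operator-norm continuity of the derivative. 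For $\eps>0$, continuity in $\eps$ follows from the smooth dependence on $\eps$ of $t_{\rm o}$ and $t_{\rm i}$ and from the continuity statements in Lemma~\ref{lem:05}.

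It remains to check that each piece takes values in $Y_P$, namely real-valued $C^{1-\alpha}$ sine series. For $\cG$ this is built into Proposition~\ref{prop:02}. For $C_Ng$, and for $\pa_\theta\mathscr J_\eps$, it follows from the real Laurent coefficients of $p_q$ together with the reflection invariance \eqref{eq:105}: these make the functions inside $\pa_\theta$ even in $\theta$, so their derivatives are odd.

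I expect the main obstacle to be bookkeeping rather than a new estimate. The point is that $\cG$ is only $C^1$, so the remainder coming from the template part is merely $o(1)$, not $O(\eps^N)$. This is precisely why \eqref{eq:118} is stated as a limit, and the argument must rely on uniform continuity of $D\cG$ near $(\delta,0)$ on the shrinking ball of radius $O(t_{\rm i}R)$. A second point needing care is that the splitting of $\psi_{D_0(t_{\rm o}g)}$ and the harmonic expansion of $W$ are only valid on $B_{1/2}$. One must therefore verify that $\eps p_{t_{\rm i}h}(\T)$ stays in $B_{r_0}$ uniformly on $\mathbb B_R$, which is exactly the condition $\eps_RB_R<r_0/2$ already arranged in Lemma~\ref{lem:05}.
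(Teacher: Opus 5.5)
Your proposal is correct and follows essentially the same route as the paper. The splitting $\psi_{D_0(t_{\rm o}g)}=\psi_\D+t_{\rm o}W_{t_{\rm o},g}$ and the Rankine formula give the exact decomposition \eqref{eq:123} into $-t_{\rm i}^{-1}\cG(\delta-t_{\rm i}\lambda,t_{\rm i}h)$ plus $\pa_\theta\mathscr J_\eps(g,h)$; the first term is then treated by the integral mean-value form with $\cG(\delta,0)=0$ and continuity of $D\cG$ at $(\delta,0)$, and the second by Lemma~\ref{lem:05}, as the paper does (your added parity check that each piece lies in $Y_P$ is a detail the paper leaves implicit).
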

\endgroup

\begin{proof}
Use the decomposition
\[
 \psi_{D_0(t_{\rm o}g)}
 =\psi_\D+t_{\rm o}W_{t_{\rm o},g}.
\]
Put $p=p_{t_{\rm i}h}(e^{\mathrm{i}\theta})$.  Fix $R>0$.  After
decreasing $\eps_R$, one has $t_{\rm i}h\in\mathcal V_P$,
$|t_{\rm i}\lambda|<\eta_\mu$, and
$\eps p\in\D$ uniformly for $(g,h,\lambda)\in\mathbb B_R$ and
$0<\eps\le\eps_R$.
Therefore the Rankine formula \eqref{eq:072} gives
\begin{equation}\label{eq:120}
 \frac{\psi_\D(\eps p)-\psi_\D(0)}{\eps^2}
 =\frac14|p|^2.
\end{equation}
Since $\psi_{D_0(t_{\rm o}g)}=\psi_\D+t_{\rm o}W_{t_{\rm o},g}$,
\begin{equation}\label{eq:121}
 Q_{\eps,t_{\rm o}g}(p)
 =\frac14|p|^2
 +\frac{t_{\rm o}}{\eps^2}
  \bigl[W_{t_{\rm o},g}(\eps p)-W_{t_{\rm o},g}(0)\bigr].
\end{equation}
Furthermore,
\begin{equation}\label{eq:122}
 \frac14-\frac{\Om_{\eps,\lambda}}2
 =\frac{\delta-t_{\rm i}\lambda}{2},
 \qquad
 \frac{t_{\rm o}}{\eps^2t_{\rm i}}=\eps^{-N}.
\end{equation}
By Proposition~\ref{prop:02}\textnormal{(iii)}, with
$\mu=\delta-t_{\rm i}\lambda$ and $q=t_{\rm i}h$,
\[
 \pa_\theta\left[
  \frac{\delta-t_{\rm i}\lambda}{2}|p|^2
  -\psi_{P(t_{\rm i}h)}(p)\right]
 =-\cG(\delta-t_{\rm i}\lambda,t_{\rm i}h).
\]
Combining these identities gives the exact decomposition
\begin{align}
 \mathscr F_{\rm i}(\eps,g,h,\lambda)
 ={}&-\frac1{t_{\rm i}}
 \cG(\delta-t_{\rm i}\lambda,t_{\rm i}h)\notag\\
 &+\eps^{-N}\pa_\theta\left[
 W_{t_{\rm o},g}\bigl(\eps p_{t_{\rm i}h}\bigr)
 -W_{t_{\rm o},g}(0)\right].
 \label{eq:123}
\end{align}
The constant $\eps^2|P(t_{\rm i}h)|\log\eps/(2\pi)$ produced by scaling the
inner potential has disappeared under $\pa_\theta$; therefore no
$|\log\eps|$ loss occurs, even after differentiation in $h$.

Since $\cG(\delta,0)=0$, the first term on the right-hand side of
\eqref{eq:123} equals
\begin{align}
 -\int_0^1\bigl\{
 D_q\cG(\delta-st_{\rm i}\lambda,st_{\rm i}h)h
 -D_\mu\cG(\delta-st_{\rm i}\lambda,st_{\rm i}h)\lambda
 \bigr\}\,\mathrm{d}s.
 \label{eq:124}
\end{align}
It converges to $\mathscr A_P[h,\lambda]$.  For $t_{\rm i}>0$, its
derivative in a variation $(\dot h,\dot\lambda)$ is
\[
 -D_q\cG(\delta-t_{\rm i}\lambda,t_{\rm i}h)\dot h
 +D_\mu\cG(\delta-t_{\rm i}\lambda,t_{\rm i}h)\dot\lambda,
\]
which converges in operator norm to
$\mathscr A_P[\dot h,\dot\lambda]$.  This argument uses the $C^1$ regularity
of the template functional and no higher shape derivative.  The second term in
\eqref{eq:123} is covered by
Lemma~\ref{lem:05}.  Applying $\pa_\theta$ gives $C_Ng$.
Because $(\delta-st_{\rm i}\lambda,st_{\rm i}h)$ converges uniformly
to $(\delta,0)$ on $[0,1]\times\mathbb B_R$, continuity of $D\cG$ at
$(\delta,0)$ makes the seed remainder and its unknown derivative tend to
zero uniformly on $\mathbb B_R$.  The uniform estimate
\eqref{eq:107} gives the same conclusion for the
harmonic term.  This proves \eqref{eq:118} and all asserted
mapping properties.
\end{proof}

\section{The coupled implicit-function argument}
\label{sec:gluing}

Set
\begin{equation}\label{eq:125}
 \mathscr F(\eps,g,h,\lambda)
 :=\bigl(\mathscr F_{\rm o}(\eps,g,h,\lambda),
          \mathscr F_{\rm i}(\eps,g,h,\lambda)\bigr)
\end{equation}
as a map into $Y_N\times Y_P$.  Propositions~\ref{prop:03}
and~\ref{prop:04} show that it is
continuous for $\eps\ge0$, is $C^1$ in the unknowns, and has a jointly
continuous derivative in those variables.  Its limiting equation is the
affine system
\begin{equation}\label{eq:126}
 \mathscr F(0,g,h,\lambda)
 =\begin{pmatrix}
   L_\delta g+P_N(0)\\
   C_Ng+\mathscr A_P[h,\lambda]
  \end{pmatrix}.
\end{equation}
Define
\begin{equation}\label{eq:127}
 g_*:=-L_\delta^{-1}P_N(0),
 \qquad
 (h_*,\lambda_*):=-\mathscr A_P^{-1}C_Ng_*.
\end{equation}
Thus $g_*$ is given explicitly by \eqref{eq:g-star} and
$\mathscr F(0,g_*,h_*,\lambda_*)=0$.

\begingroup
We first choose a product neighborhood centered at the limiting root for the
parameter-dependent implicit-function theorem.  For $R>0$, set
\begin{equation}\label{eq:128}
 \mathcal V_R
 :=B_{X_N}(g_*,R)\times B_{X_P}(h_*,R)
   \times(\lambda_*-R,\lambda_*+R).
\end{equation}
To place this neighborhood in a ball centered at the origin on which the uniform
estimates apply, choose
\[
 R_0>\max\{\|g_*\|_{X_N}+R,\|h_*\|_{X_P}+R,|\lambda_*|+R\}.
\]
Then $\overline{\mathcal V_R}\subset\mathbb B_{R_0}$.  Since
$\mathcal U_N$ and $\mathcal V_P$ are open
neighborhoods of the origin, we may decrease $\eps_1>0$ so that, uniformly
for $(g,h,\lambda)\in\overline{\mathcal V_R}$ and
$0\le\eps\le\eps_1$,
\begin{equation}\label{eq:129}
 t_{\rm o}g\in\mathcal U_N,\qquad
 t_{\rm i}h\in\mathcal V_P,
 \qquad |t_{\rm i}\lambda|<\eta_\mu,
\end{equation}
where $(\delta-\eta_\mu,\delta+\eta_\mu)$ is a fixed interval on which the
template functional is defined.  Shrinking $\eps_1$ again if necessary, the
uniformly bounded sets $P(t_{\rm i}h)$ satisfy
\begin{equation}\label{eq:130}
 \eps\overline{P(t_{\rm i}h)}\Subset D_0(t_{\rm o}g),
\end{equation}
and the physical boundary components remain embedded and pairwise separated,
uniformly for $0<\eps\le\eps_1$.  Thus the unrenormalized boundary residuals are defined on
$(0,\eps_1]\times\mathcal V_R$, and their renormalized continuous extensions
define $\mathscr F$ on the single fixed set
$[0,\eps_1]\times\mathcal V_R$.  Applying
Propositions~\ref{prop:03} and~\ref{prop:04} with
$R_0$, and
decreasing $\eps_1$ below their corresponding thresholds, gives there
\[
 \mathscr F\in C([0,\eps_1]\times\mathcal V_R;Y_N\times Y_P),
\]
and
\[
 D_{(g,h,\lambda)}\mathscr F
 \in C\bigl([0,\eps_1]\times\mathcal V_R;
 \mathcal L(X_N\times X_P\times\R,Y_N\times Y_P)\bigr).
\]
This derivative is continuous in operator norm.  This verifies the fixed-domain
hypotheses of Lemma~\ref{lem:01}; no $\eps$-dependent
neighborhood is used below.
\endgroup

The derivative at this root is
\begin{equation}\label{eq:131}
D_{(g,h,\lambda)}\mathscr F(0,g_*,h_*,\lambda_*)
 =\begin{pmatrix}
   L_\delta&0\\
   C_N&\mathscr A_P
  \end{pmatrix}.
\end{equation}
The upper-right block vanishes because the limiting outer equation contains
neither $h$ nor $\lambda$; Proposition~\ref{prop:03} gives this
conclusion at the derivative level through its joint operator-norm
continuity.
Both diagonal blocks are isomorphisms.  Hence the full operator is an
isomorphism; explicitly, its inverse sends $(y_{\rm o},y_{\rm i})$ to
\begin{equation}\label{eq:132}
 \left(
 L_\delta^{-1}y_{\rm o},
 \mathscr A_P^{-1}\bigl[y_{\rm i}-C_NL_\delta^{-1}y_{\rm o}\bigr]
 \right).
\end{equation}

\begingroup
\begin{proposition}\label{prop:05}
Let $x_*:=(g_*,h_*,\lambda_*)\in Z$.
There exist $\eps_0>0$, an open neighborhood
$V\subset\mathcal V_R$ of $x_*$, and a unique continuous map
\begin{equation}\label{eq:133}
 [0,\eps_0)\ni\eps\longmapsto
 x_\eps=(g_\eps,h_\eps,\lambda_\eps)\in V
\end{equation}
such that
\begin{equation}\label{eq:134}
 x_0=x_*,
 \qquad
 x_\eps\longrightarrow x_*
 \quad\text{in }Z\quad(\eps\downarrow0),
\end{equation}
and
\begin{equation}\label{eq:135}
 \mathscr F(\eps,g_\eps,h_\eps,\lambda_\eps)=0,
 \qquad 0\le\eps<\eps_0.
\end{equation}
For each fixed $\eps\in[0,\eps_0)$, $x_\eps$ is the only zero of
$\mathscr F(\eps,\cdot)$ in $V$.  Moreover, there is $C>0$ such that
\begin{equation}\label{eq:g-rate}
 \|g_\eps-g_*\|_{X_N}\le C\eps^2,
 \qquad 0\le\eps<\eps_0.
\end{equation}
\end{proposition}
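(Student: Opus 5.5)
The plan is to apply the parameter-dependent implicit-function theorem (the fixed-domain Lemma~\ref{lem:01} referred to above) to $\mathscr F$ on $[0,\eps_1]\times\mathcal V_R$, and then extract the rate \eqref{eq:g-rate} from the triangular structure of the limiting derivative. The hypotheses have already been assembled. By Propositions~\ref{prop:03} and~\ref{prop:04}, $\mathscr F$ and $D_{(g,h,\lambda)}\mathscr F$ are jointly continuous on $[0,\eps_1]\times\mathcal V_R$, with the derivative continuous in operator norm. We have $\mathscr F(0,x_*)=0$ by \eqref{eq:127}, and $T:=D\mathscr F(0,x_*)$ is the lower-triangular operator \eqref{eq:131}, which has the explicit bounded inverse \eqref{eq:132}.

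For existence and uniqueness, I would use the standard contraction formulation:
\[
 \Phi_\eps(x):=x-T^{-1}\mathscr F(\eps,x).
\]
By joint operator-norm continuity of $D\mathscr F$ at $(0,x_*)$, one can choose $r>0$ and $\eps_0>0$ such that
\[
 \|D\Phi_\eps(x)\|=\|T^{-1}(T-D\mathscr F(\eps,x))\|\le\tfrac12
\]
on $[0,\eps_0)\times\overline B_Z(x_*,r)$. Continuity of $\mathscr F$ at $(0,x_*)$, after possibly shrinking $\eps_0$, gives
\[
 \|T^{-1}\mathscr F(\eps,x_*)\|<r/2,
\]
so $\Phi_\eps$ maps the closed ball into itself. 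The Banach fixed point theorem then gives a unique zero $x_\eps$ in $V:=B_Z(x_*,r)$. Continuity of $\eps\mapsto x_\eps$ follows from the contraction estimate
\[
 \|x_\eps-x_{\eps'}\|\le2\|T^{-1}(\mathscr F(\eps,x_{\eps'})-\mathscr F(\eps',x_{\eps'}))\|
\]
together with continuity of $\mathscr F$ in $\eps$, and $x_0=x_*$ by uniqueness at $\eps=0$. This proves \eqref{eq:133}--\eqref{eq:135}.

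For the rate, I would use only the outer component, which decouples. By \eqref{eq:082},
\[
 0=\mathscr F_{\rm o}(\eps,x_\eps)=L_{\delta-t_{\rm i}\lambda_\eps}g_\eps+P_N(t_{\rm i}h_\eps)+\eps^2\mathscr R_{\rm o}(\eps,x_\eps).
\]
Subtracting $L_\delta g_*+P_N(0)=0$ gives
\[
 L_\delta(g_\eps-g_*)=t_{\rm i}\lambda_\eps\,\Lambda g_\eps-\bigl(P_N(t_{\rm i}h_\eps)-P_N(0)\bigr)-\eps^2\mathscr R_{\rm o},
\]
where $\Lambda$ denotes the multiplier $g_{kN}\cos(kN\theta)\mapsto-kN g_{kN}\sin(kN\theta)$, which comes from $L_{\delta-t_{\rm i}\lambda}-L_\delta$. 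Each of the three terms is controlled in $Y_N$:
\begin{itemize}
\item $\Lambda$ is bounded $X_N=C^{2-\alpha}\to C^{1-\alpha}$, and $g_\eps,\lambda_\eps$ are bounded on $V$, so the first term is $O(t_{\rm i})=O(\eps^{2N})$.
\item $P_N$ is $C^1$ by Lemma~\ref{lem:02}, so the second term is $O(t_{\rm i}\|h_\eps\|)=O(\eps^{2N})$.
\item The last term is $O(\eps^2)$ by the uniform bound \eqref{eq:083} on $\mathbb B_{R_0}\supset V$.
\end{itemize}
Since $2N\ge 4>2$, applying the bound \eqref{eq:085} for $L_\delta^{-1}$ yields \eqref{eq:g-rate}.

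The main obstacle is checking the standing requirements rather than the fixed-point argument itself. One must ensure that $V$ stays inside $\mathcal V_R\subset\mathbb B_{R_0}$, so that the uniform remainder bounds apply at the (a priori unknown) zero $x_\eps$. One must also confirm that the operator-norm continuity of $D\mathscr F$ genuinely holds up to $\eps=0$, in particular for the $N=2$ tail term in \eqref{eq:102}, since the contraction estimate depends on it. Both points are supplied by Propositions~\ref{prop:03} and~\ref{prop:04}; taking $r<R$ handles the first.
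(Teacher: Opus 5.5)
Your proposal is correct and is essentially the paper's proof. The contraction argument you write out is exactly the proof of the one-sided implicit-function Lemma~\ref{lem:01}, which the paper simply invokes. Your rate argument matches the paper's step for step: subtract the limiting outer equation, bound $(L_{\delta-t_{\rm i}\lambda_\eps}-L_\delta)g_\eps=-t_{\rm i}\lambda_\eps\pa_\theta g_\eps$ and $P_N(t_{\rm i}h_\eps)-P_N(0)$ by $O(\eps^{2N})$, bound the remainder by $O(\eps^2)$ via \eqref{eq:083}, and invert with \eqref{eq:085}.
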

\endgroup

\begin{proof}
Apply Lemma~\ref{lem:01}, the continuous-parameter
implicit-function theorem, to \eqref{eq:125} at the root
\eqref{eq:127}, using \eqref{eq:131}.  This gives
\eqref{eq:134}--\eqref{eq:135} and the stated
local uniqueness in $V$.

For the rate, take the first component of \eqref{eq:135} and use
\eqref{eq:082}.  We obtain
\[
 0=L_{\delta-t_{\rm i}\lambda_\eps}g_\eps
   +P_N(t_{\rm i}h_\eps)
   +\eps^2\mathscr R_{\rm o}
     (\eps,g_\eps,h_\eps,\lambda_\eps).
\]
The limiting outer equation is
\[
 0=L_\delta g_*+P_N(0).
\]
Subtracting these identities gives
\begin{align*}
 L_\delta(g_\eps-g_*)={}&
 -\bigl(L_{\delta-t_{\rm i}\lambda_\eps}-L_\delta\bigr)g_\eps\\
 &-\bigl(P_N(t_{\rm i}h_\eps)-P_N(0)\bigr)
 -\eps^2\mathscr R_{\rm o}
   (\eps,g_\eps,h_\eps,\lambda_\eps).
\end{align*}
By \eqref{eq:080},
\[
 \bigl(L_{\delta-t_{\rm i}\lambda}-L_\delta\bigr)g
 =-t_{\rm i}\lambda\,\pa_\theta g.
\]
Because \eqref{eq:134} implies that
$(g_\eps,h_\eps,\lambda_\eps)$ remains bounded,
\[
 \left\|
 \bigl(L_{\delta-t_{\rm i}\lambda_\eps}-L_\delta\bigr)g_\eps
 \right\|_{Y_N}\le Ct_{\rm i}.
\]
The $C^1$ regularity of $P_N:X_P\to Y_N$ also gives
\[
 P_N(t_{\rm i}h_\eps)-P_N(0)
 =t_{\rm i}\int_0^1
 DP_N(st_{\rm i}h_\eps)[h_\eps]\,\mathrm{d}s,
\]
and hence
\[
 \|P_N(t_{\rm i}h_\eps)-P_N(0)\|_{Y_N}\le Ct_{\rm i}.
\]
Finally, the branch lies in the ball $\mathbb B_{R_0}$ centered at the
origin, so the uniform estimate
\eqref{eq:083} yields
\[
 \left\|\eps^2\mathscr R_{\rm o}
 (\eps,g_\eps,h_\eps,\lambda_\eps)\right\|_{Y_N}
 \le C\eps^2.
\]
Consequently, since $t_{\rm i}=\eps^{2N}$ and $N\ge2$,
\[
 \|L_\delta(g_\eps-g_*)\|_{Y_N}
 \le C(t_{\rm i}+\eps^2)\le C\eps^2.
\]
Applying \eqref{eq:085}, we conclude that
\[
 \|g_\eps-g_*\|_{X_N}\le C\eps^2,
\]
which proves \eqref{eq:g-rate}.
\end{proof}

\section{\texorpdfstring{Geometry and quantitative asymptotics}
{Geometry and quantitative asymptotics}}
\label{sec:05}

\begin{proof}[Proof of Theorem~\ref{thm:main}]
Fix the nonresonant microscopic parameter $\rho_*$ chosen in
\eqref{eq:067}, and henceforth regard $P$, $p_0$, and $\delta$ as
fixed.  Let $(g_\eps,h_\eps,\lambda_\eps)$ be the branch in
Proposition~\ref{prop:05}.  Define
\begin{align}
 G_\eps&:=D_0(t_{\rm o}g_\eps),\notag\\
 H_{j,\eps}&:=\eps P_j(t_{\rm i}h_\eps),
 \qquad 1\le j\le N,\notag\\
 D_\eps&:=G_\eps\setminus
 \bigcup_{j=1}^N\overline{H_{j,\eps}},\notag\\
 \Om_\eps&:=\Om_*+t_{\rm i}\lambda_\eps.
 \label{eq:136}
\end{align}

The reference components $P_j$ are embedded and separated by a positive
distance.  Since $t_{\rm i}h_\eps\to0$ in $C^1$, embeddedness and separation
persist.  After multiplication by $\eps$ the holes lie in a ball of radius
$O(\eps)$, whereas $\pa G_\eps$ converges in $C^1$ to the unit circle.  Hence,
after decreasing $\eps_0$, the closed holes are pairwise disjoint and compactly
contained in $G_\eps$.  Since $G_\eps$ and the $H_{j,\eps}$ are Jordan domains,
with pairwise disjoint closures $\overline{H_{j,\eps}}\Subset G_\eps$, the
finite-component consequence of the Jordan--Schoenflies theorem
(equivalently, its successive application to the boundary curves) shows that
this configuration is homeomorphic to the open unit disk with $N$ pairwise
disjoint closed disks removed.  In particular, $D_\eps$ is connected.
Furthermore,
\[
 \R^2\setminus D_\eps
 =\bigl(\R^2\setminus G_\eps\bigr)\mathbin{\dot\cup}
   \bigcup_{j=1}^N\overline{H_{j,\eps}},
\]
so $\R^2\setminus D_\eps$ has exactly $N$ bounded connected components.
Together with its unbounded component, this gives exactly $N+1$
components of $\widehat{\C}\setminus D_\eps$.
The defining charts also give the $D_N$-symmetry of $D_\eps$.

Equation \eqref{eq:135} says that the relative stream function is
constant on $\pa G_\eps$ and on the fundamental inner boundary.  Rotation by
$2\pi/N$ gives the remaining inner equations.  By the boundary criterion
\eqref{eq:019}, the vortex patch $\one_{D_\eps}$ is a
$V$-state of angular velocity $\Om_\eps$.
\begingroup
We briefly verify the passage from the boundary equation to the weak Euler
solution.  Put $D_\eps(t)=R_{\Om_\eps t}D_\eps$ and
$V(x)=\Om_\eps x^\perp$.  Rotational covariance of the Biot--Savart law and
\eqref{eq:019} give
$V\cdot n=u_{D_\eps(t)}\cdot n$ on $\partial D_\eps(t)$.  Hence, for every
$\varphi\in C_c^\infty(\R^2)$, Reynolds' formula and
$\nabla\cdot u_{D_\eps(t)}=0$ yield
\begin{align*}
 \frac{\mathrm{d}}{\mathrm{d}t}\int_{D_\eps(t)}\varphi\,\mathrm{d}x
 &=\int_{\partial D_\eps(t)}\varphi V\cdot n\,\mathrm{d}S
  =\int_{\partial D_\eps(t)}\varphi u_{D_\eps(t)}\cdot n\,\mathrm{d}S\\
 &=\int_{D_\eps(t)}u_{D_\eps(t)}\cdot\nabla\varphi\,\mathrm{d}x.
\end{align*}
Thus $\one_{D_\eps(t)}$ satisfies the vorticity transport equation in the
distributional sense.  Since $\one_{D_\eps}\in L^1\cap L^\infty$, Yudovich
uniqueness identifies this rotating family with the global solution
\cite{Yud63}.
\endgroup
Moreover,
\[
 \one_{D_\eps}
 =\one_{G_\eps}-\sum_{j=1}^N\one_{H_{j,\eps}},
\]
so the vorticity vanishes identically in every hole; it does not merely
take a second constant value there.

The limiting angular velocity satisfies $0<\Om_*<1/2$.  Since
$\lambda_\eps$ remains bounded, the same strict inequalities hold for
$\Om_\eps$ when $\eps$ is sufficiently small.

\begingroup
Finally, define the inner boundary chart by
$p_\eps:=p_0+\rho_*^2\eps^{2N}h_\eps$.  The weighted charts and
\eqref{eq:134} show that $R_\eps$, $p_\eps$, and $\Om_\eps$
extend continuously to $\eps=0$.  Their limiting values are given by
\[
 R_\eps=\sqrt{1+2\eps^{N+2}g_\eps}\longrightarrow1,
 \qquad
 p_\eps\longrightarrow p_0
 \quad\text{in }C^{2-\alpha},
\]
and $\Om_\eps=\Om_*+\eps^{2N}\lambda_\eps\to\Om_*$.  This completes the
proof of Theorem~\ref{thm:main}.
\endgroup
\end{proof}

\begin{proof}[Proof of Proposition~\ref{prop:01}]
\begingroup
Formula \eqref{eq:main-aN} follows from \eqref{eq:g-star}.  Its positivity
follows from Proposition~\ref{prop:02}.  We now compute the boundary
expansions.  From the area chart,
\eqref{eq:g-star}, and \eqref{eq:g-rate},
\begin{align*}
 R_\eps(\theta)
 &=\sqrt{1+2\eps^{N+2}g_\eps(\theta)}\\
 &=1+a_N\eps^{N+2}\cos(N\theta)
   +O_{C^{2-\alpha}}(\eps^{N+4}).
\end{align*}
The quadratic Taylor remainder of the square root is
$O(\eps^{2N+4})$ and is therefore absorbed in the displayed error.

By \eqref{eq:134} and the definition of $X_P$ in
\eqref{eq:XP},
\[
 \|h_\eps-h_*\|_{C^{2-\alpha}}=o(1),
 \qquad
 \lambda_\eps-\lambda_*=o(1).
\]
Using the chart identities \eqref{eq:023} and
\eqref{eq:024}, together with $t_{\rm i}=\eps^{2N}$ from
\eqref{eq:075}, we obtain
\begin{align*}
 &\eps p_{t_{\rm i}h_\eps,j}(w)
  -\eps\zeta^{j-1}
   \bigl(p_0(w)+\rho_*^2\eps^{2N}h_*(w)\bigr)\\
 &\qquad
  =\rho_*^2\eps^{2N+1}\zeta^{j-1}
    \bigl(h_\eps(w)-h_*(w)\bigr)
  =o_{C^{2-\alpha}}(\eps^{2N+1}).
\end{align*}
Moreover, the definition of the physical angular velocity in
\eqref{eq:136} gives
\[
 \Om_\eps-\Om_*-\lambda_*\eps^{2N}
 =\eps^{2N}(\lambda_\eps-\lambda_*)
 =o(\eps^{2N}).
\]
This proves \eqref{eq:008}--\eqref{eq:010}.

The perturbation $h_\eps$ has neither a constant Laurent coefficient nor a
change in the leading coefficient.  Hence the normalized exterior charts
retain conformal centers $\ell\zeta^{j-1}$ and conformal radii $\rho_*$;
physical scaling gives the centers $\eps\ell\zeta^{j-1}$ and radii
$\eps\rho_*$ stated in the proposition.

For the measure expansion, the exact area chart gives
\begin{equation}\label{eq:014}
 |G_\eps\mathbin\triangle\D|
 =\frac12\int_0^{2\pi}
 \left|R_\eps(\theta)^2-1\right|\,\mathrm{d}\theta=\eps^{N+2}\int_0^{2\pi}|g_\eps(\theta)|\,\mathrm{d}\theta.
\end{equation}
For all sufficiently small $\eps$, the holes lie in $G_\eps\cap\D$, and
\[
 \sum_{j=1}^N|H_{j,\eps}|
 =\eps^2|P(t_{\rm i}h_\eps)|
 =\eps^2|P|+O(\eps^{2N+2}).
\]
Since the holes are pairwise disjoint and contained in $G_\eps\cap\D$, one
has the exact disjoint decomposition
\begin{equation}\label{eq:015}
 |D_\eps\mathbin\triangle\D|
 =|G_\eps\mathbin\triangle\D|+\sum_{j=1}^N|H_{j,\eps}|.
\end{equation}
By \eqref{eq:g-rate}, \eqref{eq:014} and the identity $g_*=a_N\cos(N\theta)$, we have
\[
 |G_\eps\mathbin\triangle\D|
 =4a_N\eps^{N+2}+O(\eps^{N+4}),
\]
because $\int_0^{2\pi}|\cos(N\theta)|\,\mathrm{d}\theta=4$.  Combining this
identity with the hole-area expansion proves
\eqref{eq:011}.

Finally, \eqref{eq:039} and \eqref{eq:seed-MN}, evaluated at
$\rho=\rho_*$, give \eqref{eq:012}.  Substitution into
\eqref{eq:071} and \eqref{eq:g-star} yields
\eqref{eq:013}, as recorded in
Remark~\ref{rem:01}.  The proof is complete.
\endgroup
\end{proof}

\begin{remark}
\label{rem:02}
An analogous problem may be posed for the Euler equation in a disk or an
annulus, with the impermeability condition on the rigid boundary.  In a disk
$B_R$, $R>1$, the Green kernel on a neighborhood of
$\overline\D\times\overline\D$ is the planar logarithmic kernel plus a smooth
image term invariant under simultaneous rotations and reflections; see
\cite{dHHM16}.  A centered Rankine vortex still induces
the solid rotation $x^\perp/2$ in its interior.  The regular part is lower
order in the leading microscopic singular block, but it contributes at order
one to the outer-scale equation.  Consequently, the outer diagonal block must
be replaced by the corresponding disk $V$-state linearization.  If that
operator is invertible in the chosen $D_N$-symmetry class,
the present two-scale structure suggests that an analogous construction
may be possible for vortex patches with several genuine holes in a disk.

In an annular fluid domain the Green kernel likewise has a local logarithmic
singularity and a smooth rotation-invariant regular part on compact subsets
\cite{HXX26b}.  The present ansatz, however, cannot be
formulated there because its polygon collapses to the origin, which is not in
the annulus.  An annular analogue would require a new seed problem: one must
select a different nondegenerate polygonal relative equilibrium and fix the
harmonic circulation around the obstacle.  It would be interesting to investigate whether analogous solutions can be
constructed in these bounded-domain settings.
\end{remark}

\section*{Acknowledgments}
The authors are grateful to Professor Jiajun Tong for bringing reference~\cite{BCGHM26} to their attention. 
Z. Xue is supported by the Postdoctoral Fellowship Program of CPSF
(Grant No.~GZB20260747) and the China Postdoctoral Science Foundation
(Grant No.~2026M793420).
W. Zhan is supported by the National Natural Science Foundation of China
(Grant No.~12571126).

\section*{Declaration on the Use of Generative AI}

The authors used OpenAI's ChatGPT (GPT-5.6 Sol) to assist with the writing,
language polishing, and checking of the manuscript. All mathematical
arguments, calculations, and proofs were independently reviewed and verified
by the authors. The authors take full responsibility for the
content, correctness, and presentation of the manuscript.

\appendix

\section{An implicit function theorem with a one-sided parameter}
\label{app:IFT}

\begin{lemma}[Continuous-parameter implicit function theorem]
\label{lem:01}
Let $X,Y$ be Banach spaces, let $U\subset X$ be open, let $x_0\in U$, and let
\[
  F:[0,a)\times U\longrightarrow Y.
\]
Assume that
\[
 F\in C([0,a)\times U;Y),
 \qquad
 D_xF\in C\bigl([0,a)\times U;\mathcal L(X,Y)\bigr),
\]
where $D_xF(t,x)\in\mathcal L(X,Y)$ denotes the bounded Fr\'echet derivative
of $F(t,\cdot)$ at $x$, and $D_xF$ is continuous in operator norm.  Suppose
further that
\[
  F(0,x_0)=0,
  \qquad
  D_xF(0,x_0):X\to Y
\]
is a bounded linear isomorphism.  Then there are
$a_0\in(0,a)$, a neighborhood
$V\subset U$ of $x_0$, and a
unique continuous map $x:[0,a_0)\to V$ such that
\[
  x(0)=x_0,
  \qquad
  F(t,x(t))=0,
  \qquad 0\le t<a_0.
\]
For each $t\in[0,a_0)$, this is the unique zero of $F(t,\cdot)$ in $V$.
If, in addition, $F|_{(0,a)\times U}$ is of class $C^1$ and
$\partial_tF$ extends continuously to $[0,a)\times U$, then
$x\in C^1([0,a_0);X)$ in the one-sided sense and
\begin{equation}\label{eq:016}
 x'(t)
 =-D_xF(t,x(t))^{-1}\partial_tF(t,x(t)),
 \qquad 0\le t<a_0,
\end{equation}
where $x'(0)$ denotes the right derivative.
\end{lemma}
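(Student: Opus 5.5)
The plan is to reduce Lemma~\ref{lem:01} to the standard contraction-mapping proof of the implicit function theorem, with $t$ as a continuous parameter. Put $A:=D_xF(0,x_0)$; it is invertible with bounded inverse by hypothesis. For $t\in[0,a)$ and $x\in U$ define
\[
 T_t(x):=x-A^{-1}F(t,x).
\]
Then $F(t,x)=0$ if and only if $T_t(x)=x$. We have $D_xT_t(x)=I-A^{-1}D_xF(t,x)$, and this vanishes at $(0,x_0)$. Joint operator-norm continuity of $D_xF$ gives $r>0$ and $a_1\in(0,a)$ such that $\overline B(x_0,r)\subset U$ and
\[
 \|D_xT_t(x)\|\le\tfrac12
 \qquad\text{for } t\in[0,a_1],\ x\in\overline B(x_0,r).
\]
Applying the mean value inequality along segments inside the convex ball, $T_t$ is then a $\tfrac12$-contraction on $\overline B(x_0,r)$ for each such $t$.

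Next I check that $T_t$ maps the ball into itself. We have $\|T_t(x_0)-x_0\|=\|A^{-1}F(t,x_0)\|$. Since $F(\cdot,x_0)$ is continuous and $F(0,x_0)=0$, this quantity is at most $r/2$ once $t<a_0$ for a suitable $a_0\le a_1$. Then
\[
 \|T_t(x)-x_0\|\le \|T_t(x)-T_t(x_0)\|+\|T_t(x_0)-x_0\|\le \tfrac r2+\tfrac r2=r.
\]
Banach's fixed point theorem now gives a unique fixed point $x(t)$ in $\overline B(x_0,r)$. I take $V:=B(x_0,r)$. Uniqueness of zeros in $V$ follows because every zero in $V$ is a fixed point of $T_t$.

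It remains to confirm that $x(t)$ actually lies in the open ball. Shrinking $a_0$ so that $\|A^{-1}F(t,x_0)\|<r/2$ strictly, the estimate $\|x(t)-x_0\|\le 2\|A^{-1}F(t,x_0)\|<r$ settles this. Continuity of $t\mapsto x(t)$ comes from the same contraction estimate:
\[
 \|x(t)-x(s)\|\le 2\|T_t(x(s))-T_s(x(s))\|=2\|A^{-1}\bigl(F(t,x(s))-F(s,x(s))\bigr)\|,
\]
and the right-hand side tends to $0$ as $t\to s$ by continuity of $F$ in $t$. Finally, $x(0)=x_0$ because $x_0$ is a fixed point of $T_0$.

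For the $C^1$ addendum, after possibly shrinking $r$ and $a_0$, $D_xF(t,x)$ stays invertible near $(0,x_0)$ with a uniformly bounded inverse, by openness of the set of isomorphisms. On $(0,a_0)$ the map $F$ is $C^1$, so the classical implicit function theorem applied at each point $(t,x(t))$, combined with local uniqueness, shows that $x$ is $C^1$ there and satisfies \eqref{eq:016}.

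The only delicate point, and the main obstacle, is the right derivative at $t=0$ and the continuity of $x'$ up to $t=0$. I would argue as follows. The right-hand side of \eqref{eq:016} extends continuously to $t=0$, because $\partial_tF$ and $D_xF^{-1}$ extend continuously and $x(t)$ is continuous. So $x'$ has a limit as $t\downarrow0$. Since $x$ is continuous on $[0,a_0)$, the mean value theorem on $[s,t]$ with $s\downarrow0$ gives the one-sided derivative at $0$, equal to this limit. This proves \eqref{eq:016} at $t=0$ as well.
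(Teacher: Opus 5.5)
Your proposal is correct and follows essentially the same route as the paper: the same contraction $T_t(x)=x-D_xF(0,x_0)^{-1}F(t,x)$ on a closed ball around $x_0$, the same estimate $\|x(t)-x(s)\|\le 2\|D_xF(0,x_0)^{-1}[F(t,x(s))-F(s,x(s))]\|$ for continuity, and the classical implicit function theorem for $t>0$. The differences are cosmetic: the paper uses the margin $r/4$, so that fixed points lie in $\overline B_{3r/4}\subset B_r$, instead of your strict $r/2$. It also obtains the right derivative at $t=0$ by letting $s\downarrow0$ in $x(t)-x(s)=\int_s^t\Phi(\tau)\,\mathrm{d}\tau$ rather than through the mean value inequality.
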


\begin{proof}
Set
\[
 A:=D_xF(0,x_0)^{-1}\in\mathcal L(Y,X),
 \qquad
 T_t(x):=x-AF(t,x).
\]
By the operator-norm continuity of $D_xF$, one can choose $r>0$ and
$a_0\in(0,a)$ such that
$\overline B_r:=\overline B_X(x_0,r)\subset U$ and
\[
 \sup_{\substack{0\le t<a_0\\x\in\overline B_r}}
 \|I-AD_xF(t,x)\|_{\mathcal L(X)}\le\frac12.
\]
Since $\overline B_r$ is convex, the Banach-space fundamental theorem of
calculus gives, for $x,y\in\overline B_r$,
\[
 T_t(x)-T_t(y)
 =\int_0^1
 \bigl[I-AD_xF(t,y+s(x-y))\bigr](x-y)\,\mathrm{d}s.
\]
Thus
\begin{equation}\label{eq:017}
 \|T_t(x)-T_t(y)\|_X\le\frac12\|x-y\|_X.
\end{equation}
After reducing $a_0$ once more, continuity of $F$ and $F(0,x_0)=0$ ensure
that
\[
 \|AF(t,x_0)\|_X\le\frac r4,
 \qquad 0\le t<a_0.
\]
Since $T_t(x_0)-x_0=-AF(t,x_0)$, for every $x\in\overline B_r$ we have
\begin{align*}
 \|T_t(x)-x_0\|_X
 &\le \|T_t(x)-T_t(x_0)\|_X
      +\|T_t(x_0)-x_0\|_X\\
 &\le \frac12\|x-x_0\|_X+\frac r4
 \le\frac{3r}{4}<r.
\end{align*}
Consequently $T_t(\overline B_r)\subset\overline B_r$, uniformly for
$0\le t<a_0$.  Banach's fixed-point theorem gives a unique
$x(t)\in\overline B_r$ satisfying $T_t(x(t))=x(t)$.

It remains to verify the dependence on $t$.  For $s,t\in[0,a_0)$, insert
$T_t(x(s))$ and use \eqref{eq:017}:
\begin{align*}
 \|x(t)-x(s)\|_X
 &\le \|T_t(x(t))-T_t(x(s))\|_X
      +\|T_t(x(s))-T_s(x(s))\|_X\\
 &\le \frac12\|x(t)-x(s)\|_X
      +\|A[F(t,x(s))-F(s,x(s))]\|_X.
\end{align*}
Hence
\begin{equation}\label{eq:018}
 \|x(t)-x(s)\|_X
 \le2\|A[F(t,x(s))-F(s,x(s))]\|_X.
\end{equation}
For fixed $s$, the right-hand side tends to zero as $t\to s$ by continuity
of $F$.  Thus $t\mapsto x(t)$ is continuous, including one-sided continuity
at $t=0$.  Since $T_0(x_0)=x_0$, uniqueness gives $x(0)=x_0$.
Finally,
\[
 T_t(x)=x\quad\Longleftrightarrow\quad AF(t,x)=0
 \quad\Longleftrightarrow\quad F(t,x)=0,
\]
because $A$ is injective.  Taking $V=B_X(x_0,r)$ proves the local uniqueness
and establishes the continuous-parameter assertion.

Assume now the additional $C^1$ hypotheses.  After decreasing $a_0$ and $V$
if necessary, the continuity of $D_xF$ and the openness of the set of bounded
linear isomorphisms ensure that $D_xF(t,x)$ is an isomorphism for every
$(t,x)\in[0,a_0)\times V$.  For $t>0$, the Banach-space
implicit-function theorem, together with the local uniqueness proved above,
shows that
$x$ is $C^1$ and satisfies \eqref{eq:016}.  Set
\[
 \Phi(t):=-D_xF(t,x(t))^{-1}\partial_tF(t,x(t)).
\]
The continuity of $x$, of the two derivatives of $F$, and of inversion on the
set of isomorphisms implies that $\Phi\in C([0,a_0);X)$.  For
$0<s<t<a_0$, the fundamental theorem of calculus gives
\[
 x(t)-x(s)=\int_s^t\Phi(\tau)\,\mathrm{d}\tau.
\]
Letting $s\downarrow0$ and using $x(s)\to x_0$, we obtain
\[
 x(t)-x_0=\int_0^t\Phi(\tau)\,\mathrm{d}\tau.
\]
Consequently,
\[
 x'_+(0)=\Phi(0)
 =-D_xF(0,x_0)^{-1}\partial_tF(0,x_0),
\]
and $x'$ is continuous at $t=0$.  This proves the one-sided $C^1$ assertion.
\end{proof}

\end{document}